\def\thm@space@setup{%
  \thm@preskip=\parskip \thm@postskip=0pt
}
\declaretheorem[parent=section]{lemma}
\declaretheorem[sibling=lemma]{theorem}
\declaretheorem[sibling=lemma, name=Proposition]{prop}
\declaretheorem[sibling=lemma]{remark}
\declaretheorem[sibling=lemma]{corollary}
\newcommand{\boundary}{\partial}
\newcommand{\set}[1]{\left\{#1\right\}}
\newcommand{\closure}[1]{\overline{#1}}
\newcommand{\scc}{simple closed curve\xspace}
\newcommand{\sccs}{simple closed curves\xspace}
\newcommand{\interior}[1]{\operatorname{int} #1}
\newcommand{\co}{\colon}
\newcommand{\R}{\mathbb{R}}
\newcommand{\Q}{\mathbb{Q}}
\newcommand{\algint}[2]{|#1\cdot#2|}
\newcommand{\ie}{\textit{i.e.}\@\xspace}
\newcommand{\cf}{\textit{cf.}\@\xspace}
\newcommand{\ceil}[1]{\left\lceil#1\right\rceil}
\newcommand{\floor}[1]{\left\lfloor#1\right\rfloor}
\newcommand{\SC}{Scharlemann cycle\xspace}
\newcommand{\Z}{\mathbb{Z}}
\newcommand{\presentation}[2]{\left\langle#1\colon#2\right\rangle}
\newcommand{\inv}[1]{\overline{#1}}
\newcommand{\defn}[1]{\emph{#1}}
\begin{document}
\title{Knots in handlebodies with nontrivial handlebody surgeries}
\author{R.\ Sean Bowman}
\date{\today}
\maketitle
\begin{abstract}
  We give examples of knots in a genus 2 handlebody which have
  nontrivial Dehn surgeries yielding handlebodies and show that these
  knots are not $1$--bridge.
\end{abstract}

\section{Introduction}
Let $K$ be a knot in a handlebody $H$ of genus $g$.  It is a natural
question to ask when $K$ has a nontrivial Dehn surgery yielding a
handlebody.  When $K$ is isotopic into $\boundary H$, there are
infinitely many surgeries on $K$ yielding handlebodies homeomorphic to
$H$.  Berge and Gabai~\cite{Berge,Gabai90} have given examples of
handlebodies of genus 2 containing knots which are not isotopic to the
boundary yet have nontrivial handlebody surgeries.  More recently,
Frigerio, Martelli, and Petronio have given infinitely many examples
of knots in handlebodies of every genus $g>1$ which have exactly three
handlebody surgeries~\cite{frigerio}.

We say that $K$ is $1$--bridge in $H$ if $K$ is isotopic to
$\alpha\cup\beta$, where $\alpha\subseteq\boundary H$ is an arc,
$\beta$ is properly embedded in $H$, and there is an arc
$\beta'\subseteq\boundary H$ so that $\beta\cup\beta'$ bounds a disk.
Berge~\cite{Berge91} gave examples of $1$--bridge knots in solid tori
which are not isotopic to the boundary and have surgeries yielding
solid tori.  Together with results of Gabai~\cite{Gabai89}, who showed
that knots in solid tori with solid tori surgeries must either be
isotopic to the boundary or $1$--bridge, this leads to a classification
of such knots when $g=1$~\cite{Berge87,Gabai90}.

The examples of knots in genus $g>1$ handlebodies with handlebody
surgeries given in~\cite{Berge,frigerio,Gabai90} are all $1$--bridge
knots.  Wu~\cite{Wu93} conjectured that Gabai's result should hold in
this case, that is, if $K$ has a nontrivial surgery yielding a
handlebody homeomorphic to $H$, then $K$ is $1$--bridge in $H$.  In
this paper we give examples of knots in a genus 2 handlebody which
disprove this conjecture.  The exteriors of these knots are
atoroidal, anannular, and have exactly two fillings homeomorphic to
handlebodies.  

Specifically, we construct a family of knots $K^L_{p,q}$ in a genus
$2$ handlebody $M^L_{p,q}$ parametrized by two integers $p,q$ and a
$2$--bridge knot $L$.  The handlebody $M^L_{p,q}$ contains an embedded
separating $3$--punctured sphere $P$ which cuts $M$ into two
handlebodies $J$ and $H$, with $K\subseteq H$.  We show

\begin{theorem}
  The exteriors of the knots $K_{p,q}^L\subseteq M_{p,q}^L$ are
  atoroidal, anannular, and have exactly two fillings homeomorphic to
  handlebodies.  Furthermore, $K_{p,q}^L$ is not $1$--bridge in
  $M_{p,q}^L$ when either
  \begin{enumerate}
  \item $q=p+1$ and $p>1$, or
  \item $q=2p\pm 1$, $p>1$, and $q>3$.
  \end{enumerate}
\end{theorem}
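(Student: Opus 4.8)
The proof naturally splits into two parts: the topological statement about the knot exteriors (atoroidal, anannular, exactly two handlebody fillings), and the obstruction to being $1$-bridge. Let me sketch how I would attack each.

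For the first part, the plan is to analyze the exterior $E = M_{p,q}^L \setminus N(K_{p,q}^L)$ directly from the construction. Since $M = J \cup_P H$ with $K \subseteq H$, the exterior decomposes as $J \cup_P (H \setminus N(K))$, where the gluing is along the $3$-punctured sphere $P$. I would first identify $H \setminus N(K)$ and $J$ as familiar pieces — presumably $J$ is a small handlebody-like piece and $H \setminus N(K)$ is controlled by the $2$-bridge knot $L$ (this is where $L$ enters: the construction likely embeds a $2$-bridge knot complement, which is atoroidal and small). The key step is to show that $P$ is incompressible and $\boundary$-incompressible in $E$, so that any essential torus or annulus in $E$ can be isotoped to meet $P$ minimally; then a standard innermost-disk/outermost-arc argument reduces an essential surface in $E$ to essential surfaces in $J$ and in $H \setminus N(K)$, which (by the smallness of $2$-bridge knot complements and the triviality of $J$) cannot exist. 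For the two handlebody fillings, I would compute which slopes $\gamma$ on $\boundary N(K)$ can possibly yield a handlebody by a homological/first-homology argument (a handlebody has free first homology of the right rank), pin down the two candidate slopes, verify directly from the construction that both give handlebodies (this should be built into how $K_{p,q}^L$ is defined — the "nontrivial handlebody surgery" is the whole point), and rule out all other slopes using the atoroidal-anannular structure together with a theorem bounding the number of non-hyperbolic or reducible fillings, or more simply because a handlebody filling of an atoroidal anannular manifold is rigid enough that only finitely many slopes work and we have already exhibited two.

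For the second part — that $K_{p,q}^L$ is not $1$-bridge — I expect this to be the main obstacle, and the place where the arithmetic conditions $q = p+1$ or $q = 2p \pm 1$ are used. Suppose for contradiction that $K$ is $1$-bridge in $M$, so $K = \alpha \cup \beta$ with $\alpha \subseteq \boundary M$, $\beta$ properly embedded, and $\beta$ isotopic rel endpoints into $\boundary M$. The strategy is to run a thin-position / intersection-number argument against the $3$-punctured sphere $P$: isotope $K$ to be $1$-bridge and simultaneously minimize $|K \cap P|$, then study the arcs of $K$ in $J$ and in $H$ and the arcs of $P \cap \boundary M$. The bridge arc $\beta$ together with the parallel arc $\beta'$ in $\boundary M$ bounds a disk $D$; intersecting $D$ with $P$ and running an innermost-circle / outermost-arc analysis should produce either a Scharlemann cycle or a contradiction with the way $K$ wraps around $H$. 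Here the macros \SC and \algint in the preamble are a strong hint: the argument will track algebraic intersection numbers of the knot (or of arcs/meridians) with curves on $P$ or on $\boundary H$, and the conditions on $p, q$ are exactly what force these intersection numbers to be incompatible with the existence of the bridge disk — i.e., a $1$-bridge presentation would force a curve of "slope" determined by $p, q$ to be too simple. The hard part is the combinatorial bookkeeping: showing the graph of intersections $D \cap P$ (or $P \cap$ a meridian disk of a handlebody filling) must contain a forbidden configuration, and then translating "forbidden configuration" into the numerical constraint that fails precisely when $q = p+1, p>1$ or $q = 2p\pm 1, p>1, q>3$.

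In summary, I would (i) set up the decomposition $E = J \cup_P (H\setminus N(K))$ and prove $P$ is essential in $E$; (ii) deduce atoroidality and anannularity by a cut-along-$P$ argument using smallness of $2$-bridge complements; (iii) identify the two handlebody slopes from the construction and rule out the rest homologically plus rigidity; (iv) assume a $1$-bridge position, put $K$ in thin position with respect to $P$, and extract a Scharlemann cycle or intersection-number contradiction whose numerical form is exactly excluded by the hypotheses on $p$ and $q$. I expect step (iv), specifically the combinatorial graph argument linking a forbidden cycle to the arithmetic of $p,q$, to be where essentially all the difficulty lies.
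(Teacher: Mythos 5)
Your outline for atoroidality and anannularity matches the paper's route (cut along the essential $3$--punctured sphere $P$, push essential tori and annuli off $P$, and use the structure of the two pieces), so that part is fine in spirit. The two places where your plan has genuine gaps are the filling count and, more seriously, the $1$--bridge obstruction. For the filling count, a first-homology argument cannot isolate two slopes: infinitely many slopes give a manifold with the homology of a handlebody. The paper instead invokes Wu's theorem that two handlebody fillings of such a manifold have distance one, which (together with the known slopes $\mu$ and $\lambda$) leaves only finitely many candidates, and then kills each remaining candidate $\alpha$ by showing that $\boundary_3P$ fails to be primitive in $H(\alpha)$ while every $\boundary$--compression of $P$ in $M(\alpha)$ would require a core curve primitive on one side (this is the content of \autoref*{three-punctured-spheres} and \autoref*{primitive-curves-product}, plus a result of Ni). None of that is recoverable from homology plus a vague appeal to rigidity.

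For the non-$1$--bridge statement, your plan to intersect the bridge disk of $\beta$ directly with $P$ and hunt for a Scharlemann cycle misses the structural reduction that makes the paper's argument work. The paper first converts ``$K$ is $1$--bridge'' into ``there is a tunnel $t$ with $\closure{M\setminus N(K\cup t)}$ a handlebody'' (\autoref*{1-bridge-iff-1-tunnel}), isotops $t$ into $H$ and off the distinguished disk $E$, and then uses the elementary but decisive fact that \emph{every properly embedded incompressible surface in a handlebody is $\boundary$--compressible} (\autoref*{sfce-in-hbody}). Since $P$ is $\boundary$--incompressible in $J$, a $\boundary$--compressing disk $D$ for $P$ must live in $H'=\closure{H\setminus N(K\cup t)}$, and the combinatorics (including the Scharlemann cycles) are run on $D\cap E$, where $E$ is the unique nonseparating disk of $\closure{H\setminus N(K)}$ with its controlled intersection pattern with $P$ --- not on the bridge disk against $P$. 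Without that reduction there is no reason a bridge disk must meet $P$ in a forbidden configuration, and indeed the arithmetic conditions on $p,q$ enter only through the labelled arcs of $\boundary E$ in $P$ and $R$. So the engine of the proof --- handlebody complement forces a $\boundary$--compression of $P$, which is then obstructed --- is absent from your plan, and I do not see how the plan as stated closes.
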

\begin{proof}
  The first properties follow from~\autoref*{K-atoroidal}
  and~\autoref*{two-handlebody-fillings}.  For the first case, let
  $(M, H, K) = (M_{p,p+1}^L, H_{p,p+1}^L, K_{p,p+1}^L)$ and suppose
  that $K$ is $1$--bridge in $M$.  By~\autoref*{1-bridge-iff-1-tunnel}
  there is a tunnel $t$ so that $M'=\closure{M\setminus (K\cup t)}$ is
  a handlebody.  \autoref*{t-cap-P-is-empty} shows that we may take
  $t$ to lie entirely in
  $H$.  
  According to~\autoref*{sfce-in-hbody} the surface $P$ is
  $\boundary$--compressible in $M'$, and it must $\boundary$--compress
  in the handlebody $H'=\closure{H\setminus N(K\cup t)}$
  by~\autoref*{P-in-J}.  But by~\autoref*{easy-case}, such
  $\boundary$--compressing disks do not exist.

  For the second case, the proof proceeds as above except
  that~\autoref*{hard-case} shows that $P$ is
  $\boundary$--incompressible.  This
  contradicts~\autoref*{sfce-in-hbody}.
\end{proof}

The paper is organized as follows: in section 2 we review some basic
definitions.  Section 3 describes a characterization of $1$--bridge
knots as those admitting a certain type of tunnel.  In section 4, we
describe a large family of knots in handlebodies and examine some
properties of these knots.  The results of section 5 show that these
knots admit exactly two Dehn surgeries yielding handlebodies.  In
section 6, we prove that infinitely many of the knots are distinct up
to homeomorphism of the handlebody.  In section 7 we examine the
position of tunnels described in section 3, and in section 8 we prove
some technical lemmas about $\boundary$--compressing disks in an
associated manifold.  Finally, in section 9 we show that two
subfamilies of the knots described in section 4 are not $1$--bridge.

The author would like to thank the referee, Robert Myers, and
especially John Luecke for many valuable conversations and
suggestions.  This work is partially supported by NSF RTG grant
DMS-0636643.

\section{Definitions}
Let $S$ be a surface with boundary.  An arc $\alpha$ properly embedded
in $S$ is called \defn{essential} if it does not cobound a disk with a
subarc of $\boundary S$.  Otherwise, it is \defn{trivial} or
\defn{$\boundary$--parallel}.  A disk $D$ properly embedded in a
$3$--manifold $M$ is called \defn{essential} if $\boundary D$ does not
bound a disk in $\boundary M$.  We call an essential disk in a
handlebody a \defn{meridian disk}.

If $S$ is a surface properly embedded in a $3$--manifold $M$, we say
that $S$ is compressible if there is an essential \scc in $S$ which
bounds an embedded disk in $M$.  Otherwise $S$ is
\defn{incompressible}.  An incompressible surface $S$ is
\defn{$\boundary$--compressible} if either $S$ is a disk which is
isotopic into $\boundary M$ or there exists a disk $D$ in $M$ such
that $D\cap S=\alpha$ is an essential arc in $S$, $D\cap\boundary
M=\beta$ is an arc in $\boundary D$, $\boundary D=\alpha\cup\beta$,
and $\alpha\cap\beta=\boundary\alpha=\boundary\beta$.  Otherwise, $S$
is $\boundary$--incompressible in $M$.

If $S$ is an embedded subsurface of $\boundary M$, we say that $S$ is
$\boundary$--compressible if there is an essential disk $D$ in $M$
such that $\boundary D$ intersects $S$ in a single arc essential in
$S$.  Otherwise $S$ is $\boundary$--incompressible.

Let $M$ be an orientable $3$--manifold with boundary, and let $D_0$,
and $D_1$ be a pair of disjoint disks in $\boundary M$.  We say that
the manifold $M\cup_{D_0\cup D_1} D^2\times I$ is obtained from $M$ by
\defn{attaching a $1$--handle}, where $D^2\times \boundary I$ is
identified with $D_0\cup D_1$ in such a way as to obtain an orientable
manifold.  The \defn{cocore} of the $1$--handle $D^2\times I$ is
$D^2\times\set{1/2}$.

Let $\alpha$ be a \scc in the boundary of a $3$--manifold $M$, and let
$A$ be a regular neighborhood of $\alpha$ in $\boundary M$.  We say
that the manifold $M\cup_A (D^2\times I)$ is obtained from $M$ by
\defn{attaching a $2$--handle} along $\alpha$, where $A$ and
$\boundary D^2\times I$ are identified.  We let $M[\alpha]$ denote the
manifold obtained by attaching a $2$--handle to $M$ along $\alpha$,
and call the curve $\alpha$ the \defn{attaching curve} of the
2--handle.

We say that a \scc $\gamma$ in the boundary of handlebody $H$ is
\defn{primitive} if attaching a 2--handle to $H$ along $\gamma$ yields
another handlebody.  Note that this is equivalent to the existence of a
meridian disk for $H$ which meets $\gamma$ in exactly one point.

If $F$ is a submanifold of $M$, denote by $N_M(F)$ a closed regular
neighborhood of $F$ in $M$.  When it is clear from the context what
$M$ is, we simply write $N(F)$.  

Let $\alpha$ be a \scc in a surface $S$ embedded in a $3$--manifold
$M$.  The isotopy class in $\boundary N(\alpha)$ of the curves
$\boundary N(\alpha)\cap S$ is called the \defn{surface slope} of $\alpha$
with respect to $F$.  

If $K$ is a knot embedded in $S^3$, denote $E(K)=\closure{S^3\setminus
  N(K)}$, the \defn{exterior of $K$}.  More generally, if $K$ is a
knot embedded in some $3$--manifold $M$, the \defn{exterior of $K$ in
  $M$} is the manifold $\closure{M\setminus N(K)}$.  

We adopt the language of~\cite{Wu12} for rational tangles.  A
\defn{tangle} is a pair $(B, t, m)$ where $B$ is a $3$--ball,
$t=t_1\cup t_2$ is a pair of arcs properly embedded in $B$, and $m$ is
a \scc on $\boundary B$ which divides $\boundary B$ into two disks
each containing two points of $\boundary t$.  We will say that two
tangles $(B, t, m)$ and $(B, t', m')$ are equivalent if there is a
homeomorphism of the triples which is the identity on $\boundary B$.

A tangle $(B, t, m)$ is \defn{rational} if $(B, t, m)$ is equivalent
to the trivial tangle $(D^2, \set{x, y})\times I$ for $x, y\in D^2$.
Here $m$ is the vertical circle which bounds a disk separating $t_1$
and $t_2$ in $B$.  In this case, $t$ is isotopic rel $\boundary t$ to
a pair of arcs on $\boundary B$ of slope $r/s\in \Q\cup\set{1/0}$.

For a rational number $q$, denote the ceiling of $q$ by $\ceil{q}$ and
the floor of $q$ by $\floor{q}$.

\section{Characterization of 1--bridge knots}\label{chapter:knots}
In this section we give a characterization of $1$--bridge knots as
those possesing a certain type of tunnel.  

\begin{lemma}\label{defn-of-unknotted-arc}
  Let $\beta\subseteq H$ be an arc properly embedded in a handlebody.
  Then there is an embedded disk $D$ such that $\boundary D =
  \beta\cup\beta'$, $\beta'\subseteq\boundary H$, and $\boundary \beta
  = \boundary \beta'$, iff $\closure{H\setminus N(\beta)}$ is a
  handlebody.
\end{lemma}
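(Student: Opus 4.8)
The plan is to pin down exactly how $H$ and $H'=\closure{H\setminus N(\beta)}$ are built from one another. Two structural observations drive everything. Write $g$ for the genus of $H$. An Euler characteristic count shows $\boundary H'$ is a connected closed surface of genus $g+1$: passing from $\boundary H$ to $\boundary H'$ deletes the two disks $N(\beta)\cap\boundary H$ and glues in the lateral annulus $A:=N(\beta)\cap H'=\boundary D^2\times I$ of $N(\beta)$, changing $\chi$ by $-2$. Second, by construction $H=H'\cup_A(D^2\times I)$, so $H=H'[\gamma]$ is obtained from $H'$ by attaching a $2$--handle along the core $\gamma$ of $A$, and the core arc $\beta=\set{0}\times I$ of $N(\beta)$ is precisely the core of this $2$--handle.

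For the forward implication, suppose the disk $D$ with $\boundary D=\beta\cup\beta'$ exists. Let $B=N(D)$; this is a $3$--ball meeting $\boundary H$ in the disk $E=N_{\boundary H}(\beta')$, so $F:=\closure{\boundary B\setminus E}$ is a disk properly embedded in $H$ with $H=B\cup_F H_1$, where $H_1:=\closure{H\setminus B}\cong H$, and after a small isotopy $\beta$ lies in $F$ with $\boundary\beta\subseteq\boundary F$. Split $N(\beta)$ along $F$. Using that removing a regular neighborhood of a boundary arc from a ball (resp.\ from a handlebody) yields again a ball (resp.\ a homeomorphic handlebody), one sees that $H'$ is $H_1$ with a ball glued along two disjoint boundary disks, i.e.\ $H_1$ with a $1$--handle attached; hence $H'$ is a handlebody (of genus $g+1$).

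For the reverse implication, suppose $H'$ is a handlebody, so by the first observation it has genus $g+1$. Since attaching a $2$--handle to $H'$ along $\gamma$ yields the genus--$g$ handlebody $H$, the curve $\gamma$ is primitive in $H'$, hence there is a meridian disk $D_1$ of $H'$ meeting $\gamma$ in exactly one point. A standard outermost--arc argument lets me isotope $D_1$ so that $D_1\cap A$ is a single arc $a_1$, essential in the annulus $A$ and crossing $\gamma$ once. Inside the $2$--handle $N(\beta)$ the arc $a_1$ extends to an embedded ``vertical'' disk $R_1$ with $R_1\cap A=a_1$, with $R_1\cap\boundary H$ a pair of arcs in $N(\beta)\cap\boundary H$, and with $\beta\subseteq\boundary R_1$. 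Then $D:=D_1\cup_{a_1}R_1$ is an embedded disk in $H$ whose boundary consists of $\beta$ together with an arc $\beta'$ assembled from $\boundary D_1\setminus a_1\subseteq\boundary H$ and the two arcs of $R_1\cap\boundary H$; thus $\boundary D=\beta\cup\beta'$ with $\beta'\subseteq\boundary H$ and $\boundary\beta=\boundary\beta'$, as required.

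I expect the reverse implication to be the main obstacle. One must recognize the $2$--handle structure $H=H'[\gamma]$ and genuinely exploit the hypothesis that $H'$ is a handlebody --- not merely that $\boundary H'$ has genus $g+1$ --- in order to produce the dual meridian disk $D_1$, and then carry out the cut--and--paste that fuses $D_1$ with a vertical disk of $N(\beta)$. Getting $D_1$ into efficient position with respect to the lateral annulus $A$ (removing inessential circles and arcs of $D_1\cap A$) is the routine but essential bookkeeping inside that step.
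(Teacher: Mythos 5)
Your proof is correct and follows essentially the same route as the paper's: the forward direction realizes $H'$ as a handlebody homeomorphic to $H$ with a $1$--handle attached (the paper phrases this as cutting $H'$ along $D\cap H'$), and the reverse direction views $H=H'[\gamma]$ as a $2$--handle attachment, invokes primitivity of $\gamma$ to get a meridian disk of $H'$ dual to $\gamma$, and extends it across $N(\beta)$ to the desired disk $D$. Your version just supplies more of the bookkeeping (the genus count and the positioning of $D_1$ relative to the annulus $A$) that the paper leaves implicit.
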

\begin{proof}
  Suppose that there is a disk $D$ as in the claim.  After cutting
  $H'=\closure{H\setminus N(\beta)}$ along $D\cap H'$ we obtain an
  handlebody homeomorphic to $H$.  On the other hand, the space $H$ is
  obtained from $H'$ by adding a 2--handle.  Let $\gamma$ be the
  attaching curve of the 2--handle, and note that $\gamma$ is
  primitive in $H'$.  Therefore there is a disk $D'\subseteq H'$
  meeting $\alpha$ exactly once.  This disk extends to a disk
  $D\subseteq H$ with $\boundary D=\alpha\cup\beta$ and
  $\alpha\subseteq \boundary H$.
\end{proof}

If an arc $\beta$ satisfies these conditions we say that $\beta$ is
\defn{unknotted}.

Let $H$ be a handlebody of genus $g>0$.

\begin{prop}\label{1-bridge-iff-1-tunnel}
  A knot $K\subseteq H$ is 1--bridge iff there is an embedded arc
  $t\subseteq H$ so that $\boundary t =\set{a, b}$, $K\cap t =
  \set{a}$, $\boundary H\cap t = \set{b}$, and $\closure{H\setminus
    N(t\cup K)}$ is a handlebody.
\end{prop}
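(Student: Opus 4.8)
The plan is to prove both directions by building the required disk/handlebody from the defining data of a $1$--bridge knot, and conversely.

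For the forward direction, suppose $K$ is $1$--bridge, so $K = \alpha \cup \beta$ with $\alpha \subseteq \boundary H$ an arc, $\beta$ properly embedded in $H$, and (by \autoref{defn-of-unknotted-arc}, since $\beta$ cobounds a disk with an arc $\beta' \subseteq \boundary H$) the arc $\beta$ is unknotted. First I would take the tunnel $t$ to be a small sub-arc of $\alpha$: more precisely, push the interior of $\alpha$ slightly into the interior of $H$ except near one endpoint, so that $t$ becomes a properly embedded arc meeting $\boundary H$ in a single point $b$ and meeting $K$ in the single point $a$ (the other endpoint of the pushed-in $\alpha$), with $t$ otherwise disjoint from $K$. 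Then $K \cup t$ is (up to isotopy) $\beta$ together with a stub, and $\closure{H \setminus N(t \cup K)}$ is homeomorphic to $\closure{H \setminus N(\beta)}$ with a $1$--handle drilled along $t$ — but drilling a boundary-parallel arc just re-glues, so one checks directly that $\closure{H\setminus N(t\cup K)} \cong \closure{H\setminus N(\beta)}$, which is a handlebody by \autoref{defn-of-unknotted-arc}. I expect the bookkeeping of "where exactly to push" to be the only subtlety here; it is routine but must be drawn carefully.

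For the converse, suppose such a tunnel $t$ exists with $H' = \closure{H \setminus N(t \cup K)}$ a handlebody. The manifold $H$ is recovered from $H'$ by attaching two $2$--handles: one along the meridian $\mu_K$ of $K$ and one along the meridian of $t$. The key observation is that the meridian of the tunnel $t$ is primitive in $H'$ — indeed, $\boundary N(t)$ meets $\boundary H$ in a disk (the cocore-side boundary near $b$), which gives a meridian disk of $H'$ intersecting the $t$-meridian exactly once. So attaching that one $2$--handle first recovers a handlebody $H''$, and we have expressed $H$ as $H''$ with a single $2$--handle attached along a curve that is the meridian $\mu_K$ of $K$. Now I would run the argument of \autoref{defn-of-unknotted-arc} in reverse: $K$ sits in $H''$ (isotoped appropriately) as a knot whose meridian is primitive, hence there is a meridian disk $D''$ of $H''$ meeting $\mu_K$ once, which caps off to a disk $D \subseteq H$ with $\boundary D = \alpha \cup \beta$, $\alpha \subseteq \boundary H$, exhibiting the $1$--bridge splitting $K = \alpha \cup \beta$.

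The main obstacle is ensuring that after attaching the tunnel's $2$--handle, the resulting core arc (what remains of $K$) really is unknotted in $H''$ in the precise sense of \autoref{defn-of-unknotted-arc} — i.e.\ that the disk $D''$ one obtains from primitivity of $\mu_K$ genuinely has its boundary arc on $\boundary H$ and not running over the region just reglued. Concretely, I would track $\boundary N(K \cup t)$ as a subsurface of $\boundary H'$, note that $\boundary H \cap \boundary H'$ together with the annulus along $\boundary N(t)$ forms the relevant portion of $\boundary H''$, and verify that the primitivizing disk can be chosen with boundary in $\boundary H \subseteq \boundary H''$. Once this is set up correctly the rest is an application of \autoref{defn-of-unknotted-arc} and the definition of $1$--bridge, so I would organize the write-up around that lemma as the workhorse in both directions.
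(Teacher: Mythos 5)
Your forward direction is essentially the paper's argument: the tunnel is the trace of the push-in of $\alpha$, and the identification $\closure{H\setminus N(K\cup t)}\cong\closure{H\setminus N(\beta)}$ together with \autoref*{defn-of-unknotted-arc} finishes it. That half is fine.

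The converse, however, has a genuine gap. Your key claim is that the meridian of $t$ is primitive in $H'$, so that attaching that $2$--handle first ``recovers a handlebody $H''$.'' But $H'' = H'\cup N(t)$ is exactly $\closure{H\setminus N(K)}$, whose boundary is the disjoint union of $\boundary H$ and the torus $\boundary N(K)$; a manifold with disconnected boundary is never a handlebody, so $\mu_t$ cannot be primitive in $H'$. The disk you offer as a witness, $\boundary N(t)\cap\boundary H$, lies \emph{in} $\boundary H$ rather than being properly embedded in $H'$, so it is not a meridian disk and does not certify primitivity. The subsequent step is also too strong: a meridian disk of $\closure{H\setminus N(K)}$ meeting $\mu_K$ exactly once would extend to a disk in $H$ meeting $K$ transversally in one point, i.e.\ it would show $K$ is isotopic to a core of a handle of $H$ (``$0$--bridge''), which is much more than $1$--bridge and is false for the knots this proposition is meant to apply to. The fact that your argument, if it worked, would prove this stronger statement is itself a signal that a hypothesis is being used incorrectly.

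The missing move is to change the decomposition before invoking \autoref*{defn-of-unknotted-arc}: slide $K$ along $t$ (within $N(t)$) so that $K$ becomes $\alpha\cup\beta$ with $\alpha\subseteq\boundary H$ and $\beta$ a properly embedded arc, and observe that this slide carries $N(K\cup t)$ to a neighborhood of $\alpha\cup\beta$ whose complement is unchanged, so $\closure{H\setminus N(\beta)}\cong\closure{H\setminus N(K\cup t)} = H'$, a handlebody by hypothesis. Now \autoref*{defn-of-unknotted-arc} applies directly to the properly embedded arc $\beta$ (here the relevant primitive curve is the attaching curve of the $2$--handle $N(\beta)$, not the meridian of $K$ or of $t$), giving the disk that exhibits $\beta$ as unknotted and hence $K$ as $1$--bridge. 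Without the slide, the one-handle-at-a-time reconstruction you attempt does not isolate an arc to which the lemma can be applied.
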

\begin{proof}
  Suppose $K$ is isotopic to $\alpha\cup\beta$, where
  $\alpha\subseteq\boundary H$ and $\beta$ is unknotted.  Using a
  collar neighborhood of $\boundary H$, push $\alpha$ and $\beta$
  inside $H$ slightly to obtain an isotopic curve $\alpha'\cup\beta'$.
  A point $p\in\interior{\alpha}$ traces an arc $t$ under this
  isotopy.  We can use $t$ to push the arc $\alpha'$ to the boundary,
  showing that $\closure{H\setminus N(\alpha'\cup\beta'\cup t)}\cong
  \closure{H\setminus N(\beta)}$.  However, $\beta$ is unknotted, so
  this last space is a handlebody.

  On the other hand, if $t$ is an arc as in the proposition, we may
  slide $K$ along $t$ in a neighborhood of $t$ to the form
  $\alpha\cup\beta$, where $\alpha\subseteq \boundary H$ and $\beta$
  is a properly embedded arc in $H$.  Then $\closure{H\setminus
    N(\beta)}\cong \closure{H\setminus N(K\cup t)}$, and we can think
  of $N(\beta)$ as a $2$--handle attached to $\closure{H\setminus
      N(\beta)}$ along a curve $\gamma$ to obtain $H$.  Clearly
  $\gamma$ is primitive in $\closure{H\setminus N(\beta)}$, so there
  is a meridian disk $D$ of $\closure{H\setminus N(\beta)}$ whose
  boundary meets $\gamma$ exactly once.  This shows that $\beta$ is
  unknotted in $H$, and therefore $K$ is 1--bridge.  
\end{proof}

\section{The Knots}\label{section:knots}
In this section we present a large family of knots in a genus 2
handlebody which have nontrivial handlebody surgeries.  The knots are
constructed by gluing a genus two handlebody containing a knot to
another genus 2 handlebody along a $3$--punctured sphere.  This
$3$--punctured sphere becomes an embedded incompressible surface which
is $\boundary$--incompressible in the complement of the knot.
Furthermore, the knot exteriors contain no essential tori or annuli.
We then prove some facts about these knots which will be useful later.

Let $m$ and $l$ be a meridian and longitude, respectively, in the
boundary of a solid torus $D^2\times S^1$, and let $c$ be the core
curve of the solid torus.  Identify $\closure{D^2\times S^1\setminus
  N(c)}$ with $T^2\times I$ where $I$ is the interval $[0,1]$ and
$T^2\times\set{1}=\boundary N(c)$.  Let $K$ be a $(p,q)$ curve in
$T^2\times\set{0}$, \ie, a curve meeting $m$ algebraically and
geometrically in $p$ points and meeting $l$ algebraically and
geometrically in $q$ points.  Let $e$ be a curve in $T^2\times \set{0}$
parallel to $K$ with $K\cap e=\emptyset$, and let $\pi$ be a curve in
$T^2\times\set{0}$ meeting $K$ exactly once and $e$ exactly once.  The
annuli $e\times I$ and $\pi\times I$ meet in a single arc $a$ and
restrict to disks in the handlebody $H=\closure{T^2\times I\setminus
  N(a)}$.  The disk $D=(\pi\times I)\cap H$ marks $K$ as a primitive
curve in $\boundary H$, and $E=(e\times I)\cap H$ gives a
nonseparating disk which is disjoint from $K$.  

Push $K$ to the interior of $H$ and denote by $(H_{p,q}, K_{p,q})$ the
pair of handlebody and knot obtained by this procedure.  We will
consider two pairs $(H, K)$ and $(H', K')$ to be equivalent if there
is an orientation preserving homeomorphism $f\co H\to H'$ such that
$f(K) = K'$.  If this is the case we write $(H,K)\cong (H', K')$.

Let $x$ be the boundary component of $m\times I$ lying on $\boundary
N(c)\cap H$.  Then $x$ meets $\boundary E$ in $p$ points.  Let $s$ be
a subarc of $\boundary E$ with one endpoint on $x$ and the other on
$l$.  Let $P$ be a neighborhood of $x\cup s\cup l$ in $\boundary H$.
This surface is a $3$--punctured sphere in $\boundary H$ whose
complement $R=\closure{\boundary H\setminus P}$ is also a
$3$--punctured sphere.  Call $\boundary_1 P$ the component of
$\boundary P$ which is homotopic to $x$ in $\boundary H$, $\boundary_2
P$ the component which is homotopic to $l$, and $\boundary_3P$ the
third boundary component.  Note that $H$ is homeomorphic to $P\times
I$, and in particular, $P$ is incompressible in $H$.

Choose curves $m'$ and $l'$ in $T^2\times \set{0}$ parallel to $m$ and
$l$, respectively, so that $D_m=(m'\times I)\cap H$ and $D_l=(l'\times
I)\cap H$ are meridian disks in $H$ meeting $K$ in $p$ and $q$ points.
Note that the pair is a complete disk system for $H$, \ie, cutting $H$
along these two disks yields a ball.

Pictured in~\autoref*{fig:involution} is $H_{3,4}$ together with
$\boundary E$ and $P$ (shaded, green).  Also shown is the axis of an
involution $\tau$ which interchanges $P$ and $R$ and leaves $K$
invariant. Note that there were two choices for the arc $s$ above.
The involution $\tau$ shows that the pair $(H,P)$ is well defined.


\begin{figure}[h!tb]
  \begin{center}
    \def\svgwidth{0.9\textwidth}
    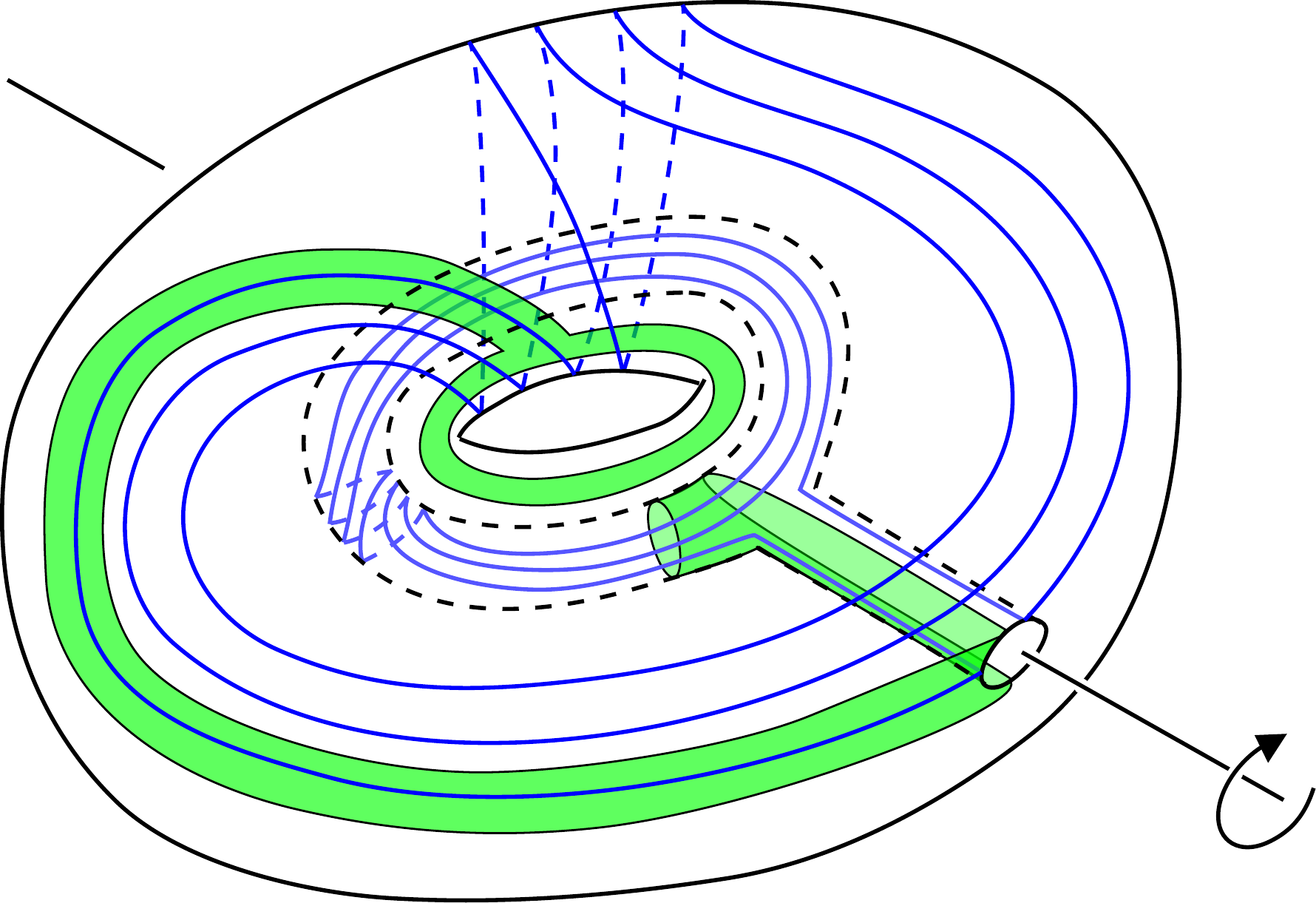
  \end{center}
  \caption{$H_{3,4}$ together with $\boundary E$ and $P$}
  \label{fig:involution}
\end{figure}

There is another involution $\sigma$ on $H$ which interchanges $D_m$
and $D_l$.  Under this involution, shown
in~\autoref*{fig:involution2}, $P$ is invariant: $\boundary_3 P$ is
sent to itself, and $\boundary_1 P$ and $\boundary_2 P$ are swapped.
Note that the attaching curve of the $2$--handle $N(a)$ can be taken
to be invariant under this involution.  The involution shows that
$(H_{p,q},K_{p,q})\cong (H_{q,p},K_{q,p})$.

\begin{figure}[h!tb]
  \begin{center}
    \def\svgwidth{0.7\textwidth}
    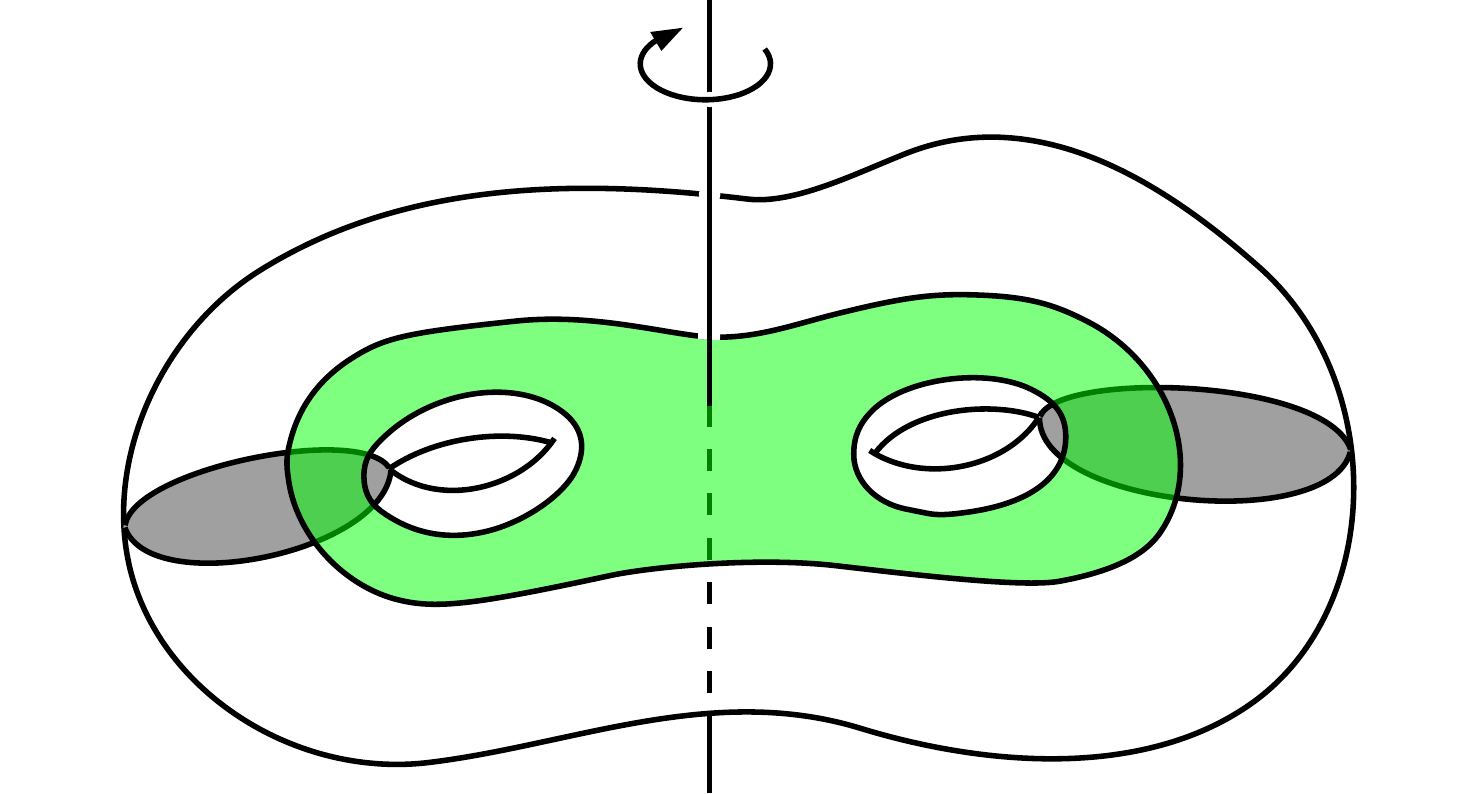
  \end{center}
  \caption{Another view of $H_{p,q}$ with an involution}
  \label{fig:involution2}
\end{figure}



We obtain the knots we wish to study by gluing $H_{p,q}$ to another
handlebody $J$ along $P$.  Here we describe the construction of $J$.
Let $L$ be a nontrivial 2--bridge knot in $S^3$, in 2--bridge position
with respect to some height function $h\co S^3\to \R$.  Then $L$ has
an unknotting tunnel $t$ connecting its two maxima~\cite{Kobayashi99},
that is, a tunnel so that $J=\closure{S^3\setminus N(L\cup t)}$ is a
handlebody.  The knot exterior $E(L)$ is obtained from $J$ by
attaching the 2--handle $N(t)$ along an attaching curve
$\gamma\subseteq\boundary J$.  Let $P'$ be a 3--punctured sphere
embedded in $\boundary J$ with $\boundary P' =
\alpha\cup\beta\cup\gamma$, where $\alpha$ and $\beta$ are meridians
of $L$.

If we attach a $2$--handle to $J$ along $\alpha$ or $\beta$, we obtain
a new knot exterior whose knot has one maximum and one minimum with
respect to $h$.  This must be the unknot, and therefore $\alpha$ and
$\beta$ are primitive curves.  On the other hand, $\gamma$ is not
primitive, since attaching a $2$--handle along it gives the exterior
of a nontrivial knot.  Note that $\alpha$ and $\beta$ are
\defn{jointly primitive} in the sense that $J[\alpha, \beta]$ is a
$3$--ball.  Therefore we may write them as the cores of the left and
right handles of $J$.  After embedding $J$ in $S^3$ so that both
$\alpha$ and $\beta$ bound disks in $S^3\setminus H$, we see that
there is an arc $c$ properly embedded in $P$ which connects $\alpha$
and $\beta$.  This arc is unique up to isotopy.  The sphere
$S=\boundary J[\alpha, \beta]$ is a genus zero Heegaard surface which
divides $S^3$ into two balls, $B_0$ and $B_1$.  Assume $J\subseteq
B_0$ and call the cocores of the attached $2$--handles $a$ and $b$.
These are unknotted arcs in $B_0$.

The arc $c\subseteq\boundary J$ extends to an arc in $S$ meeting one
endpoint of $a$ and one endpoint of $b$.  By abuse of notation we will
also call this arc $c$.  Note that there is an arc $c'\subseteq
S\setminus(\boundary a\cup\boundary b)$ such that $c\cap c'=\emptyset$
and $a\cup b\cup c\cup c'$ is a knot $K_0$ in $S^3$.  Pushing $c$ and
$c'$ slightly inside $B_1$, we see that $S$ is a bridge surface for
$K_0$.  Furthermore, the exterior of $K_0$ in $S^3$ is homeomorphic to
the space obtained by attaching a $2$--handle to $J$ along $\gamma$.
We may therefore regard the pair $(J, P)$ as the exterior of a $m/n$
rational tangle with $P$ as the twice punctured disk on the left half
sphere.  By viewing the pair $(J, P)$ in this way, we will be able to
apply the results of~\cite{Wu12} regarding disks in $J$.

Let $J$ and $P'$ be a handlebody and pants pair as constructed above,
and let $(H_{p,q}, K_{p,q})$ be the handlebody and knot pair obtained
from a $(p,q)$ curve in the boundary of a solid torus as above.
Identify $P$ and $P'$ so that $\boundary_1P$ and $\boundary_2P$ are
identified with $\alpha$ and $\beta$. Note that since $H_{p,q}\cong
P\times [0,1]$, the resulting space $M^L_{p,q}=J\cup H_{p,q}$ is a
handlebody and does not depend on the choice of identification of $P$
and $P'$ up to equivalence.  

When $J$ is viewed as the exterior of an $m/n$ rational tangle as
above, it is easy to see that there is an involution of $J$ which
swaps $\alpha$ and $\beta$ and leaves $\gamma$ invariant.  Together
with the involutions $\tau$ and $\sigma$ considered earlier, we see
that the pair $(M^L_{p,q},K_{p,q})$ is well defined and that
$(M^L_{p,q},K_{p,q})\cong(M^L_{q,p},K_{q,p})$.  Therefore from now on
we will always assume that $p<q$.  We consider $K$ as a knot in the
interior of $M^L_{p,q}$ and write $(M^L_{p,q},K^L_{p,q})$ when we need
to emphasize the dependence on $L$, $p$, or $q$.

In \autoref*{section:distinct} we will show that there are infinitely
many distinct such knots.  Later we will show that if $L$ is a
nontrivial $2$--bridge knot and $n>1$ is an integer,
$K^L_{n,n+1}\subseteq M^L_{n,n+1}$ and $K^L_{n,2n\pm 1}\subseteq
M^L_{n,2n\pm 1}$ are not $1$--bridge knots.  In order to do this, we
need a few facts about $H$ and $J$.

Fix integers $q>p>1$ and a nontrivial $2$--bridge knot $L$, and let
$(H,K) = (H^L_{p,q}, K^L_{p,q})$. 

\begin{lemma}\label{P-in-J}
  The surface $P'$ is incompressible and $\boundary$--incompressible in $J$.
\end{lemma}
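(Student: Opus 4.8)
The plan is to exploit the identification of $(J,P)$ with the exterior of an $m/n$ rational tangle, where $P$ is the twice-punctured disk on the left half-sphere, so that the statement becomes an assertion about essential surfaces in a rational tangle exterior. First I would recall that the exterior of a rational tangle is a genus $2$ handlebody, and that the $3$--punctured sphere $P'=P$ has $\partial P' = \alpha\cup\beta\cup\gamma$, with $\alpha,\beta$ primitive and $\gamma$ non-primitive; in particular $\alpha$ and $\beta$ are the cores of the two handles of $J$. For incompressibility, I would argue that a compressing disk $D$ for $P$ in $J$ would, after an innermost-disk/outermost-arc argument, give a compression or $\partial$--compression of $P$ in the standard product picture of the tangle exterior; but $P$ is a $3$--punctured sphere, each of whose essential simple closed curves is parallel to a boundary component, and each such curve is either primitive ($\alpha$ or $\beta$) or non-primitive ($\gamma$), hence none bounds a disk in $J$. (The primitive curves bound disks meeting a meridian once, not disks disjoint from a meridian; the standard way to see no essential curve of $P$ bounds a disk in $J$ is that $H_1(J)\to H_1(J,\text{complementary handlebody})$ or the intersection pairing obstructs it.)

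For $\partial$--incompressibility, the key is to invoke the results of \cite{Wu12} on disks in the exterior $J$ of a rational tangle. A $\partial$--compressing disk for $P$ in $J$ would have boundary consisting of an essential arc $a\subseteq P$ together with an arc $\beta'\subseteq \partial J\setminus P = R'$, where $R'$ is the complementary $3$--punctured sphere. An essential arc in a $3$--punctured sphere joins two (not necessarily distinct) boundary components, so $a$ has its endpoints on $\partial_iP,\partial_jP$ for some $i,j$. I would then translate this into a statement about an essential disk in the tangle exterior meeting the defining twice-punctured disk in a single essential arc, and show — using the classification of such disks from \cite{Wu12}, or directly from the product structure of the rational tangle — that no such disk exists because it would force the tangle to be trivial (or would produce a reducing/boundary-reducing feature incompatible with $L$ being a nontrivial $2$--bridge knot).

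The main obstacle I expect is the $\partial$--incompressibility, specifically making precise the dictionary between "$\partial$--compressing disk for $P$ in $J$" and the disks classified in \cite{Wu12} for rational tangles, since one must be careful about which boundary components of $P$ are allowed (the arc $c$ connecting $\alpha$ and $\beta$, and the role of $\gamma$) and about the distinction between arcs essential in $P$ versus essential in $\partial J$. A secondary subtlety is that $\partial$--incompressibility in our sense is only defined for incompressible surfaces, so incompressibility must genuinely be established first; I would present the two parts in that order. Once the translation is set up, the conclusion should follow from the fact that a nontrivial $2$--bridge knot corresponds to a tangle with $|n|\ge 2$ (equivalently, $\gamma$ non-primitive), which is exactly what rules out the offending disk.
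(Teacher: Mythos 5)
Your plan is essentially the paper's proof: the paper also identifies $(J,P')$ with the exterior of an $m/n$ rational tangle, observes that a compressing disk is a $(0,0)$ disk and a $\boundary$--compressing disk is a $(1,1)$, $(2,0)$, or $(0,2)$ disk (counting intersections with $\boundary_1P'\cup\boundary_2P'$ and $\boundary_3P'$), and invokes Lemma 2.3 of Wu to contradict the nontriviality of $L$. Your slightly more elementary handling of incompressibility (essential curves in a pair of pants are boundary-parallel, and none of $\alpha$, $\beta$, $\gamma$ is nullhomologous in $J$) is a valid minor variant, but the substance of the argument is the same.
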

\begin{proof}
  Suppose that $(J, P')$ arises from the exterior of an $m/n$ rational
  tangle as described above.  Let $D$ be a compressing disk or a
  $\boundary$--compressing disk whose boundary intersects $\boundary
  P'$ minimally.  Following~\cite{Wu12}, we say that $D$ is an $(r,
  s)$ disk if it meets $\boundary_1 P'\cup \boundary_2 P'$ in $r$
  points and $\boundary_3 P'$ in $s$ points.

  If $D$ is a compressing disk for $P'$, then it is a $(0, 0)$ disk
  and~\cite[Lemma 2.3]{Wu12} implies that $n = 0$.  If $D$ is a
  $\boundary$--compressing disk for $P'$, then $D$ is either a $(1,
  1)$ disk, a $(2, 0)$ disk, or a $(0,2)$ disk.  The same lemma
  implies that either $n=1$ or $\inv{m}$, the mod $n$ inverse of $-m$,
  is nonzero.  Since these are all contradictions to the fact that $L$
  is nontrivial, $P'$ must be incompressible and
  $\boundary$--incompressible in $J$.
\end{proof}

\begin{lemma}\label{attaching-2-h}
  After attaching a $2$--handle along $\boundary_iP$,
  $K\subseteq H$ becomes a knot in a solid torus which
  meets a meridian disk algebraically
  \begin{itemize}
    \item $p$ times if $i=1$,
    \item $q$ times if $i=2$, and
    \item $q-p$ times if $i=3$.
  \end{itemize}
\end{lemma}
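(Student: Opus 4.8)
The plan is to use the product structure $H\cong P\times I$ (with $P$ the $3$--punctured sphere) to recognize each of the three manifolds $H[\boundary_iP]$ as a solid torus, and then to compute the winding number of $K$ in it homologically.

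First I would check that $H[\boundary_iP]$ is a solid torus. Under the homeomorphism $(H,P)\cong(P\times I,P\times\{0\})$, the curve $\boundary_iP$ is isotopic in $\boundary H$ to a curve $c_i\times\{1/2\}$ lying on the annulus $c_i\times I\subseteq\boundary P\times I$, where $c_i$ is the corresponding boundary circle of $P$. Attaching a $2$--handle along such a curve amounts to gluing a $2$--dimensional disk to $P$ along $c_i$: one has $(P\times I)[c_i\times\{1/2\}]\cong(P\cup_{c_i}D^2)\times I$, and $A:=P\cup_{c_i}D^2$ is an annulus. Hence $H[\boundary_iP]\cong A\times I$ is a solid torus $V_i$, whose meridian disk may be taken to be $\delta\times I$ for a spanning arc $\delta$ of $A$. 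The algebraic intersection number of $K$ with $\delta\times I$ is the image of $[K]$ under the winding--number isomorphism $H_1(V_i)\cong\Z$, and this homomorphism factors as $H_1(H)\to H_1(H)/\langle[\boundary_iP]\rangle\cong H_1(V_i)$.

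Next I would pin down the relevant homology classes. We have $H_1(H)\cong\Z^2$, generated by the images $[m]$ and $[l]$ of the meridian and longitude of the original solid torus. Since $x=m\times\{1\}$ and $\boundary_1P$, $\boundary_2P$ are freely homotopic in $\boundary H$ to $x$ and to $l$, the classes $[\boundary_1P]$ and $[\boundary_2P]$ generate the subgroups $\langle[m]\rangle$ and $\langle[l]\rangle$; and because the boundary circles of $P$ cobound, $[\boundary_3P]=\pm[m]\pm[l]$. On the other hand $K$ is a $(p,q)$--curve pushed into $\interior H$, so $\algint{K}{m}=p$ and $\algint{K}{l}=q$; choosing orientations of $m$ and $l$ so that $[K]=q[m]+p[l]$, I would trace $\boundary_3P$ around the graph $x\cup s\cup l$, using that the band $s$ is a subarc of $\boundary E$ with $E$ the meridian disk parallel to $K$ (see~\autoref*{fig:involution}), and check that then $[\boundary_3P]=[m]+[l]$ in $H_1(H)$.

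Finally I would read off the answer. For $i=1$, $H_1(V_1)=H_1(H)/\langle[m]\rangle\cong\Z[l]$ and the image of $[K]$ is $p[l]$, so $K$ meets a meridian disk algebraically $p$ times; the case $i=2$ is symmetric and gives $q$. For $i=3$, in $H_1(V_3)=H_1(H)/\langle[m]+[l]\rangle$ we have $[l]=-[m]$, so the image of $[K]=q[m]+p[l]$ is $(q-p)[m]$, and since $q>p$ this is $q-p$. The one delicate point I expect is the orientation bookkeeping in the $i=3$ case: the two a priori possibilities $[\boundary_3P]=[m]\pm[l]$ give the genuinely different answers $q\mp p$, so one must actually verify how $\boundary_3P$ runs along the band $s\subseteq\boundary E$ relative to the fixed orientation of $K$. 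Everything else is routine.
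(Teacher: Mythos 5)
Your argument is correct and, at bottom, computes the same invariant as the paper, but it packages the computation differently. The paper disposes of $i=1,2$ as ``immediate from the construction'' (the disks $D_m$ and $D_l$ visibly become meridian disks of $H[\boundary_1P]$ and $H[\boundary_2P]$ and meet $K$ geometrically, not just algebraically, in $p$ and $q$ points), and for $i=3$ it embeds $H$ in $S^3$ as in the figure, observes that $H[\boundary_3P]$ is a solid torus with solid torus complement, and reads the winding number off as the linking number of $K$ with the core of the complementary solid torus. Your route is more uniform: identify $H[\boundary_iP]\cong (P\cup_{c_i}D^2)\times I$ as a solid torus via the product structure, and compute the image of $[K]=q[m]+p[l]$ in $H_1(H)/\langle[\boundary_iP]\rangle\cong\Z$. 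These are the same computation in different clothing --- the linking number with the complementary core \emph{is} the winding number isomorphism --- but your version avoids any appeal to a particular embedding in $S^3$. What each approach buys: the paper's figure makes the sign in case $i=3$ visually evident, while your approach isolates exactly where the content lies, namely in showing $[\boundary_3P]=\pm([m]+[l])$ rather than $\pm([m]-[l])$. You correctly flag this as the one delicate point but leave it as a promissory note (``I would trace $\boundary_3P$ around the graph $x\cup s\cup l$''); to be complete you should actually carry out that trace, e.g.\ by orienting a neighborhood of $x\cup s\cup l$ and checking that the induced boundary orientations on the parallel copies of $x$ and $l$ in $\boundary_3P$ agree with the orientations for which $[K]=q[m]+p[l]$. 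The paper is equally terse here (``easily seen to be $q-p$''), so this is not a gap relative to the paper's own standard, but it is the only step of the lemma that is not purely formal.
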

\begin{proof}
  The first two are immediate from the construction.  To see the
  third, consider $H$ embedded in $S^3$ as shown
  in~\autoref*{fig:involution}.  After attaching a $2$--handle along
  $\boundary_3P$, we obtain a solid torus in $S^3$ whose complement is
  also a solid torus.  The linking number of a core of the
  complementary solid torus with $K$ gives the appropriate
  algebraic intersection number, which is easily seen to be $q-p$.
\end{proof}

\begin{lemma}\label{E-is-unique}
  Up to isotopy, the disk $E$ is the unique nonseparating essential
  disk in $\closure{H\setminus N(K)}$.
\end{lemma}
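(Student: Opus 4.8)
My plan is to first pin down the topology of $H'=\closure{H\setminus N(K)}$ and then reduce everything to disks in the incompressible–boundary manifold $T^2\times I$. Recall from the construction that the disk $D=(\pi\times I)\cap H$ is disjoint from $E$, meets $K$ in exactly one point, and that $\{E,D\}$ is a complete disk system for $H$. Hence $V=\closure{H\setminus N(E)}$ is a solid torus with meridian disk $D$, and since $|D\cap K|=1$ the knot $K$ is isotopic in $V$ to the core of $V$; consequently $\closure{V\setminus N(K)}\cong T^2\times I$, and $H'$ is recovered from this $T^2\times I$ by attaching the $1$--handle $N(E)$ along two disks $E^+,E^-$ in the torus $T_0=\boundary V$, with $E$ as the cocore. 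Throughout I will use that $H$ and $H'$ are irreducible and that $K$ carries a nonzero class in $H_1(H)$ (by \autoref*{attaching-2-h} it meets a meridian disk algebraically $p>0$ times), so that no ball in $H$ contains $K$ and $\boundary N(K)$ is incompressible in $H'$.

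The first reduction is to disks in $H$. Let $E'$ be a nonseparating essential disk in $H'$. Since $\boundary N(K)$ is incompressible and $E'$ is essential, $\boundary E'$ cannot lie on $\boundary N(K)$, so $\boundary E'\subseteq\boundary H$; and if $\boundary E'$ bounded a disk in $\boundary H$ then, as no ball in $H$ contains $K$, the resulting ball would be a product region exhibiting $E'$ as $\boundary$--parallel in $H'$, contradicting essentiality. So $E'$ is an essential disk in $H$, still nonseparating (since $H\setminus E'\supseteq H'\setminus E'$ is connected), and disjoint from $K$; it suffices to show any such $E'$ can be isotoped to $E$ through disks disjoint from $K$.

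Next I would isotope $E'$, keeping $\boundary E'\subseteq\boundary H$ and $E'$ disjoint from $K$, so as to minimize $|E'\cap E|$, and show that the minimum is $0$. After removing circles of intersection in the usual way (an innermost one bounds a ball, which must miss $K$ since $K$ lies in no ball), $E'\cap E$ is a collection of arcs, and cutting $E'$ along them produces disks in $H'\setminus N(E)\cong T^2\times I$ each of whose boundary circles lies in $T_0$; since $T^2\times I$ has incompressible boundary, each such circle bounds a disk in $T_0$. An outermost piece therefore bounds a ball together with a subdisk of $T_0$, this ball misses $K$, and isotoping the piece across it strictly reduces $|E'\cap E|$ — this is the standard innermost--disk / outermost--arc move, and it is a genuine isotopy of $E'$ precisely because the pieces have trivial boundary in $T_0$. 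So we may assume $E'\cap E=\emptyset$, whence $E'\subseteq T^2\times I$ and $\boundary E'$ bounds a disk $\Delta\subseteq T_0$. If $\Delta$ met neither $E^+$ nor $E^-$, then $E'$ would be $\boundary$--parallel in $H$; if it contained both, then $\Delta\setminus(E^+\cup E^-)$ would be a pair of pants in $\boundary H$ giving $[\boundary E']=[\boundary E^+]+[\boundary E^-]=0$ in $H_1(\boundary H;\Z/2)$, contradicting that $E'$ is nonseparating. Hence $\Delta$ contains exactly one, say $E^+$, and $\Delta\setminus E^+$ together with half of the annular neighborhood $N(\boundary E)\subseteq\boundary H$ is an annulus cobounded by $\boundary E'$ and $\boundary E$. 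Then $E'$, $E$, and this annulus form a $2$--sphere bounding a ball $\mathcal B\subseteq H$; since $K\subseteq\mathcal B$ is impossible, $\mathcal B$ is a product between $E'$ and $E$ disjoint from $K$, so $E'$ is isotopic to $E$ in $H'$.

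The step I expect to require genuine care is the reduction $|E'\cap E|\to 0$: one must run the innermost--disk / outermost--arc argument carefully enough to confirm that each step is an ambient isotopy of the embedded disk $E'$ in $H'$ — not a boundary compression — and check that every auxiliary $2$--sphere produced along the way bounds a ball avoiding $K$. Both points come down to the single fact that $K$ represents a nonzero homology class in $H$, which is what the standing hypothesis on $p$ and $q$ guarantees.
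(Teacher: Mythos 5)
Your route is genuinely different from the paper's (the paper never forms $\closure{V\setminus N(K)}$; it minimizes $|E_1\cap E_2|$ over all pairs of nonisotopic nonseparating disks missing $K$, proves ``disjoint implies parallel'' using the primitive disk $D$, and then \emph{surgers} one disk along an outermost subdisk of the other to contradict minimality), and your reductions at either end are fine. The gap is precisely at the step you flagged: the claim that isotoping an outermost piece $\delta$ of $E'$ across the ball $\mathcal B$ cobounded with a subdisk $\delta'\subseteq T_0$ strictly reduces $|E'\cap E|$. Two problems. First, a piece outermost in $E'$ need not have innermost boundary circle in $T_0$, so $\mathcal B$ may contain other pieces of $E'$ and the move is not yet an isotopy of the embedded disk $E'$ (repairable by choosing the piece whose boundary is innermost in $T_0$). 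Second, and fatally, $T_0=\boundary V$ is \emph{not} a subsurface of $\boundary H'$: it contains the two attaching disks $E^{\pm}$ of the $1$--handle $N(E)$, which lie in the interior of $H'$, and $\delta'$ necessarily contains a subdisk $R$ of $E^{+}$ (one of the two sides of the arc $\boundary\delta\cap E^+$). So parallelism of $\delta$ into $T_0$ does not let you park it near the boundary: to finish the isotopy you must push the $R$--portion through $N(E)$, across $E$, and the band of $E'$ that reconnects that portion to the portion lying over $\boundary H$ must run around the annulus $\boundary E\times[-1,1]$ inside a collar of $\boundary H$ --- and that band meets $E$ in an arc parallel to $\boundary R\cap\boundary E$. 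Already when $E'\cap E$ is a single arc, the move trades that arc for this new one and $|E'\cap E|$ does not drop. This is exactly the obstruction the paper's surgery-plus-double-minimality scheme is designed to avoid: surgery genuinely lowers the intersection count but produces a \emph{different} disk, and the isotopy statement is recovered only by quantifying over all pairs.

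A secondary issue: you deduce $\closure{V\setminus N(K)}\cong T^2\times I$ from ``$|D\cap K|=1$, hence $K$ is isotopic to the core of $V$.'' That implication is false in general --- a core with a local knot tied in it also meets a meridian disk exactly once. The conclusion is nevertheless correct here, but for the constructive reason that $K$ is parallel in $T^2\times\set{0}$ to the curve $e$ and $E$ is the restriction of $e\times I$, so cutting along $E$ turns $T^2\times I$ into $(\text{annulus})\times I$ with $K$ a core. If you repair the intersection-reduction step (most easily by adopting the paper's surgery argument, for which your $T^2\times I$ picture cleanly supplies the ``disjoint implies parallel'' input), the remainder of your write-up goes through.
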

\begin{proof}
  Suppose there are two nonisotopic nonseparating essential disks in
  $H$, $E_1$ and $E_2$, which do not meet $K$.  Among all such pairs,
  choose the one which minimizes $|E_1\cap E_2|$.  Let $D$ be an
  essential disk in $H$ meeting $K$ exactly once and with $|D\cap
  E_1|$ minimal.  An innermost curve/outermost arc argument shows that
  $D\cap E_1=\emptyset$.




  After compressing $H$ along $E_1$ we obtain a solid torus containing
  a knot $K$.  The disk $D$ shows that the boundary of the resulting
  solid torus is incompressible in the complement of $K$  and therefore
  $E_1\cap E_2\neq\emptyset$.

  We may assume that $E_1\cap E_2$ consists of arcs; let $\alpha$ be
  one outermost in $E_1$ which cuts off a subdisk $\delta$ of $E_1$ so
  that the interior of $\delta$ does not meet $E_2$.  Surgering $E_2$
  along $\delta$ we obtain two essential disks, one of which must be
  nonseparating.  Call this disk $E_2'$.  The disk $E_2'$ is disjoint
  from $E_2$ and has at least one fewer arc of intersection with
  $E_1$.  Therefore both pairs $(E_2, E_2')$ and $(E_1, E_2')$ are
  parallel, and so $E_1$ is parallel to $E_2$.
\end{proof}

\begin{lemma}\label{minimum-of-E-cap-P}
  The disk $E$ meets $P$ in $p+q-1$ arcs and cannot be isotoped to
  reduce this intersection.  
\end{lemma}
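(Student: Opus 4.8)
The plan is to compute $|\boundary E\cap\boundary P|$ directly from the construction and then argue that it cannot be decreased. First observe that the minimisation happens on the surface $\boundary H$ alone: since $K$ lies in the interior of $H$, the boundary of $\closure{H\setminus N(K)}$ has the two components $\boundary H$ and $\boundary N(K)$, so any isotopy of $E$ in $\closure{H\setminus N(K)}$ restricts to an isotopy of the curve $\boundary E$ within $\boundary H$; conversely, every isotopy of $\boundary E$ within $\boundary H$ extends to an ambient isotopy of $\closure{H\setminus N(K)}$ supported in a collar of $\boundary H$, hence disjoint from $N(K)$, and drags $E$ along. So it is enough to show that $\boundary E$ and $\boundary P=\boundary_1P\cup\boundary_2P\cup\boundary_3P$ realise their minimal geometric intersection number in $\boundary H$ and that this number is $2(p+q-1)$.

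Next I would read off the intersections from \autoref{fig:involution}. Writing $F_0=(T^2\times\set{0})\cap\boundary H$ and $F_1=\boundary N(c)\cap\boundary H$, the curve $\boundary E$ meets $F_0$ in an arc isotopic to $e$ and meets $F_1$ in an arc isotopic to $e\times\set{1}$, the two joined by two arcs running over the annulus $\boundary N(a)$. Since $\boundary_1P$ is a pushoff of $x\subseteq F_1$, $\boundary_2P$ a pushoff of $l\subseteq F_0$, and $\boundary_3P$ runs around $x$, along $s$, and around $l$, a direct count using $|e\cap l|=q$ and $|(e\times\set{1})\cap x|=p$ gives $|\boundary E\cap\boundary_1P|=p$, $|\boundary E\cap\boundary_2P|=q$, and $|\boundary E\cap\boundary_3P|=p+q-2$, so $|\boundary E\cap\boundary P|=2(p+q-1)$ and $E\cap P$ consists of $p+q-1$ arcs. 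The same bookkeeping identifies these arcs: one arc of $\boundary E\cap P$ runs from $\boundary_1P$ to $\boundary_2P$ (the one containing $s$), $p-1$ run from $\boundary_1P$ to $\boundary_3P$, and $q-1$ run from $\boundary_2P$ to $\boundary_3P$. In particular every arc of $\boundary E\cap P$ joins two distinct components of $\boundary P$, hence is essential in $P$.

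By the bigon criterion, $|\boundary E\cap\boundary P|$ is minimal once every arc of $\boundary E\cap P$ is essential in $P$ and every arc of $\boundary E\cap R$ is essential in $R=\closure{\boundary H\setminus P}$: an innermost bigon would exhibit a $\boundary$--parallel complementary arc on one side or the other. The first condition was just checked. For the second I would invoke the involution $\tau$ of \autoref{fig:involution}, which interchanges $P$ and $R$ and fixes $K$. Since $\tau$ carries $\closure{H\setminus N(K)}$ to itself and $\tau(E)$ is again a nonseparating essential disk in it, \autoref{E-is-unique} forces $\tau(E)$ to be isotopic to $E$; thus $\tau$ carries the arc system $\boundary E\cap P$ to the arc system $\boundary E\cap R$ up to isotopy, and essentiality of the arcs in $P$ transfers to essentiality of the arcs in $R$. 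Hence there are no bigons, $|\boundary E\cap\boundary P|=2(p+q-1)$ is minimal, and $E$ cannot be isotoped to meet $P$ in fewer than $p+q-1$ arcs.

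The only genuinely delicate step is the direct count in the second paragraph — carefully tracing $\boundary E$ against the neighbourhood $P=N(x\cup s\cup l)$ to extract the three numbers $p$, $q$, $p+q-2$ and the list of arc types — for which \autoref{fig:involution} does most of the work; everything else is formal, relying only on the collar argument, the bigon criterion, and the uniqueness of $E$.
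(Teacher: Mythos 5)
Your overall strategy is the same as the paper's: read off the arc system of $\boundary E$ against $P$ directly from the construction (one arc from $\boundary_1P$ to $\boundary_2P$, $p-1$ from $\boundary_1P$ to $\boundary_3P$, $q-1$ from $\boundary_2P$ to $\boundary_3P$, hence $p+q-1$ arcs), and then conclude minimality from the bigon criterion because every arc joins distinct boundary components of a pair of pants and is therefore essential. Your preliminary reduction to an isotopy of $\boundary E$ in $\boundary H$ is harmless and correct, and your counts $p$, $q$, $p+q-2$ agree with the paper's.

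The one step that does not hold up is the use of the involution $\tau$ together with \autoref{E-is-unique} to handle the arcs in $R$. From ``$\tau(E)$ is isotopic to $E$'' you cannot conclude that the arc system $\boundary E\cap R$ is, up to isotopy, the $\tau$--image of $\boundary E\cap P$: being isotopic as properly embedded disks only pins down the isotopy class of $\boundary E$ in $\boundary H$, and an isotopy of $\boundary E$ can freely create or destroy arcs of intersection with $R$ (a finger move producing a bigon already changes the arc system without changing the isotopy class). So ``$\tau(E)\simeq E$'' transfers no information about how the \emph{given} position of $\boundary E$ sits relative to $R$; what it would show is only that the arcs of $\boundary\tau(E)\cap R$ are essential, which says nothing about $\boundary E\cap R$. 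The conclusion you want is true, but it has to come from the same direct bookkeeping you did on the $P$ side: the paper simply observes that $\boundary E\cap R$ likewise consists of one arc joining $\boundary_1P$ to $\boundary_2P$, $p-1$ arcs joining $\boundary_1P$ to $\boundary_3P$, and $q-1$ arcs joining $\boundary_2P$ to $\boundary_3P$, each of which joins distinct boundary components and is therefore essential in $R$. (Alternatively, one could argue that $\tau$ preserves $\boundary E$ \emph{setwise} in the explicit model, which would make your symmetry argument legitimate, but that requires checking the construction rather than invoking uniqueness up to isotopy.) With that step replaced, the proof is the paper's proof.
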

\begin{proof}
  From the description it is clear that we can choose $P$ so that
  $\boundary E\cap P$ consists of a single arc joining $\boundary_1P$
  and $\boundary_2P$, $p-1$ parallel arcs joining $\boundary_1 P$ and
  $\boundary_3 P$, and $q-1$ parallel arcs joining $\boundary_2 P$ and
  $\boundary_3 P$.  Similarly, $\boundary E\cap R$ consists of a
  single arc joining $\boundary_1P$ and $\boundary_2P$, $p-1$ parallel
  arcs joining $\boundary_1 P$ and $\boundary_3 P$, and $q-1$ parallel
  arcs joining $\boundary_2 P$ and $\boundary_3 P$.

  Note that $|\boundary E\cap P| = \frac{1}{2}\left(|\boundary E\cap
    \boundary_1 P| + |\boundary E\cap \boundary_2 P| + |\boundary
    E\cap \boundary_3 P|\right)$.  If we could reduce $|\boundary
  E\cap P|$, we would be able to reduce one of $|\boundary
  E\cap\boundary_i P|$.  In this case, there would be a bigon of
  intersection of $\boundary E$ and $\boundary_i P$ on $\boundary H$,
  and an innermost such bigon of intersection would lie in $P$ or $R$,
  giving a trivial arc.  However, each of the arcs above is
  nonseparating, and so this is the minimal intersection of $P$ and
  $\boundary E$.
\end{proof}

The disk $E$ is pictured in~\autoref*{fig:E} together with labels for
subarcs of $\boundary E$.  These labels correspond to arcs
in~\autoref*{fig:PR}, which shows the surfaces $P$ and $R$ along with
the arcs $P\cap\boundary E$ and $R\cap\boundary E$ given
by~\autoref*{minimum-of-E-cap-P}.  The bold arcs represent a
collection of parallel arcs.  There are three groups of arcs in $P$:
there is a single arc $a^P$ with endpoints on $\boundary_1P$ and
$\boundary_2P$.  There are $p-1$ arcs
$d^P_{\inv{q}},d^P_{2\inv{q}},\dots,d^P_{(p-1)\inv{q}}$ with endpoints
on $\boundary_1P$ and $\boundary_3P$.  Here $\inv{q}$ is the inverse
of $q$ in $\Z_p$, and the indices are taken modulo $p$, so that
$d^P_{-1}=d^P_{p-1}$.
There are $q-1$ arcs $b^P_{(q-1)\inv{p}},b^P_{(q-2)\inv{p}},\dots,
b^P_{\inv{p}}$ with endpoints on $\boundary_2P$ and $\boundary_3P$.
Here $\inv{p}$ is the inverse of $p$ in $\Z_q$, and the indices are
taken modulo $q$ so that $b^P_{-1}=b^P_{q-1}$.

\begin{figure}[h!tb]
  \begin{center}
    \def\svgwidth{0.5\textwidth}
    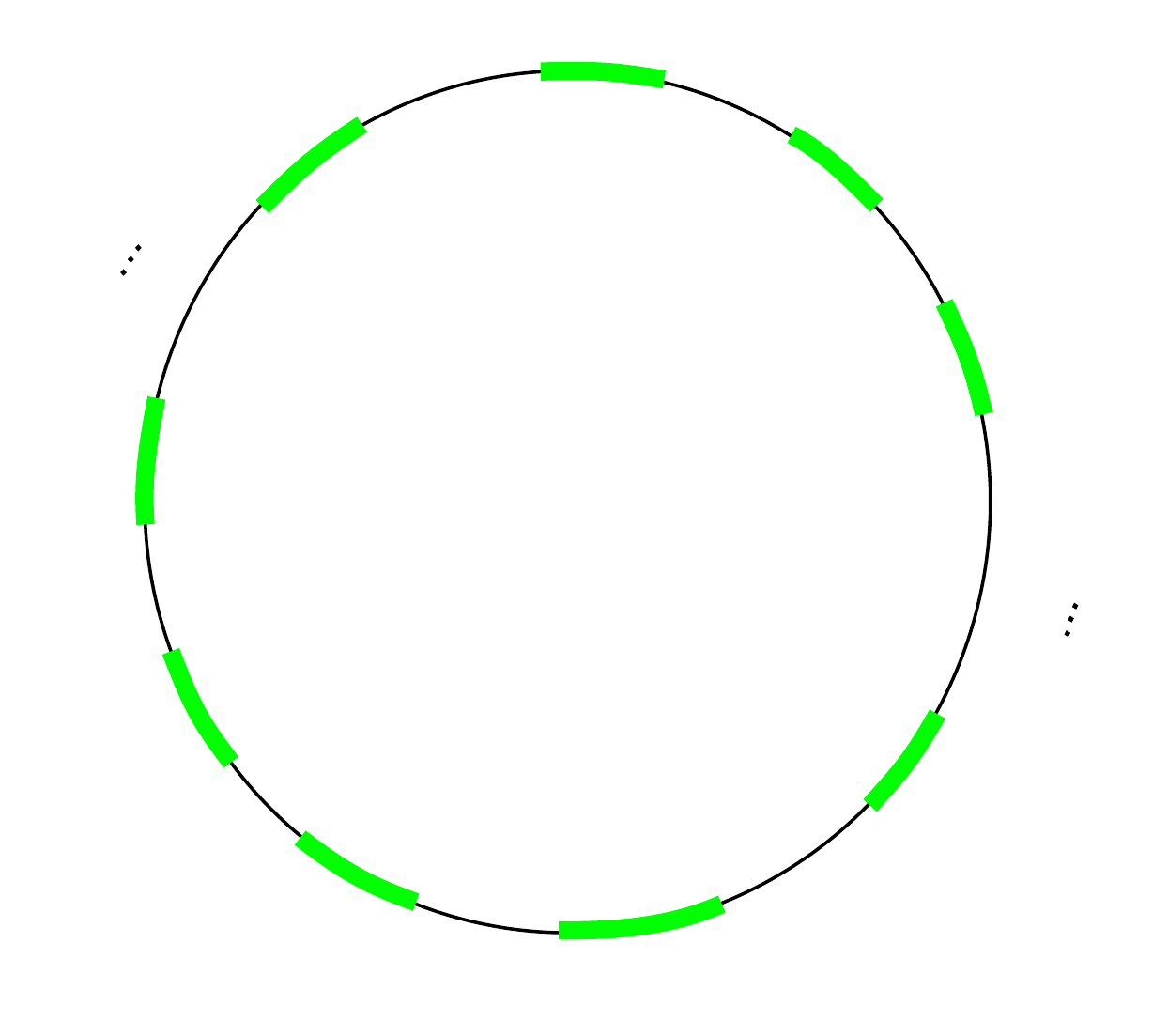
  \end{center}
  \caption{The disk $E$}
  \label{fig:E}
\end{figure}

In $R$, there is a similar collection of arcs.  The arc in $R$ with
endpoints on $\boundary_1P$ and $\boundary_2P$ is called $c^R$, but
otherwise the names are similar.  We will call components of
$P\setminus\boundary E$ and $R\setminus\boundary E$ \defn{regions} of
$P$ and $R$, respectively.  All regions of $P$ and $R$ are rectangles
except for two hexagons, $H_1^P$ and $H_2^P$, in $P$ and two, $H_1^R$
and $H_2^R$, in $R$.  The boundary of $H_1^P$ contains the arcs
$d_{\inv{q}}^P$ and $b_{\inv{p}}^P$, and the boundary of $H_1^R$
contains $d_{\inv{q}}^R$ and $b_{\inv{p}}^R$.  Similarly, the boundary
of $H_2^P$ contains the arcs $d_{(p-1)\inv{q}}^P$ and
$b_{(q-1)\inv{p}}^P$, and the boundary of $H_2^R$ contains
$d_{(p-1)\inv{q}}^R$ and $b_{(q-1)\inv{p}}^R$.  When there is no
danger of confusion we will drop the superscripts.

\begin{figure}[h!tb]
  \begin{center}
    \def\svgwidth{0.8\textwidth}
    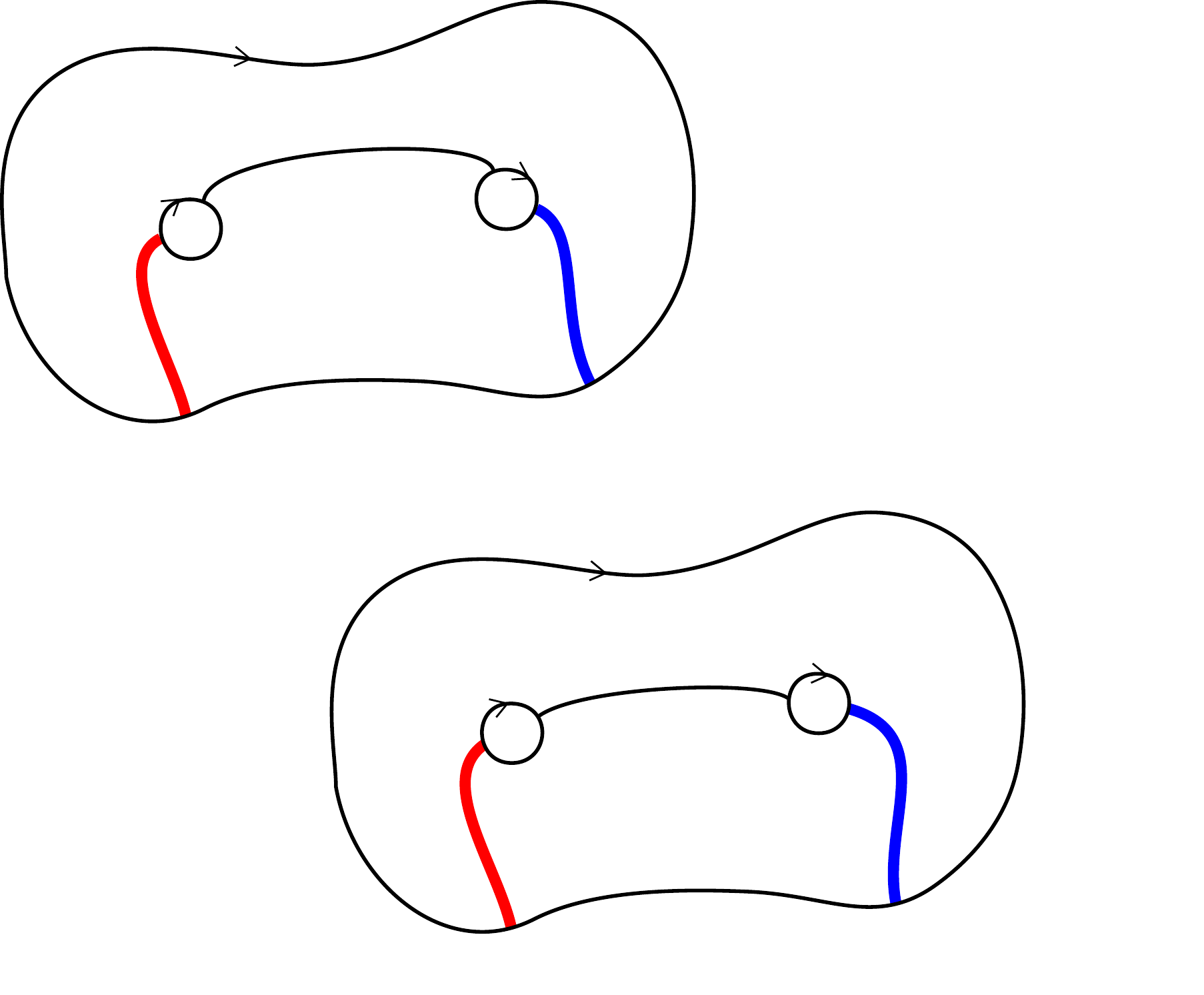
  \end{center}
  \caption{The surfaces $P$ and $R$, and the boundary of $E$}
  \label{fig:PR}
\end{figure}

\begin{remark}\label{P-R-remarks}
There are several things to note about these two figures:
\begin{itemize}
  \item $a^P\cap\boundary_2P = b_1^R\cap\boundary_2P$ and
    $a^P\cap\boundary_1 P = d_{-1}^R\cap\boundary_1P$,
  \item $c^R\cap\boundary_1P = d_1^P\cap\boundary_1P$ and
    $c^R\cap\boundary_2P = b_{-1}^P\cap\boundary_2P$,
  \item $b_i^R\cap\boundary_3P = b_i^P\cap\boundary_3P$,
  \item $d_i^R\cap\boundary_3P = d_i^P\cap\boundary_3P$,
  \item $b_i^P\cap\boundary_2P = b_{i+1}^R\cap\boundary_2P$ for
    $i=1,\dots,q-2$, 
  \item $d_i^R\cap\boundary_1P = d_{i+1}^P\cap\boundary_1P$ for
    $i=1,\dots,p-2$, and
  \item $H_1^P\cap\boundary_3P = H_1^R\cap\boundary_3P$ and
    $H_2^P\cap\boundary_3P=H_2^R\cap\boundary_3P$.
\end{itemize}
\end{remark}

Finally, here are some facts about $H$, $J$, $K$, and $M$ which will
be used later.

\begin{lemma}\label{no-annuli-on-P}
  Let $N$ be $H$ or $J$. Any incompressible annulus $A$ properly
  embedded in $N$ with $\boundary A\subseteq P$ may be isotoped to lie
  entirely in $P$.
\end{lemma}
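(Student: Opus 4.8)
The strategy is to analyze $A \cap D$ where $D$ is a suitable complete disk system for $N$, and argue that an incompressible annulus with all of its boundary on the pants $P$ must be parallel into $P$. I would treat the two cases $N = H$ and $N = J$ together where possible. Recall that $H \cong P \times I$, so every properly embedded incompressible annulus with boundary on $P = P \times \{0\}$ is automatically $\boundary$--parallel into $P \times \{0\}$, which is exactly the conclusion; this handles the case $N = H$ immediately (an incompressible surface in a product $F \times I$ with boundary in $F \times \{0\}$ is isotopic into $F \times \{0\}$, or one invokes that $P$ is incompressible and that $H$ is irreducible together with the structure of the product). For $N = J$, the point is that $P' = P$ is incompressible and $\boundary$--incompressible in $J$ by \autoref*{P-in-J}, so the annulus $A$ cannot be $\boundary$--compressible either unless it is $\boundary$--parallel.

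For the $J$ case I would argue as follows. Let $A \subseteq J$ be an incompressible annulus with $\boundary A \subseteq P$. Each component of $\boundary A$ is an essential simple closed curve in $J$ or bounds a disk in $\boundary J$; since $A$ is incompressible and $J$ is a handlebody (hence irreducible and boundary-irreducible modulo compressing disks), if a component of $\boundary A$ bounded a disk in $\boundary J$ we could cap off and compress or else $A$ would be $\boundary$--parallel, so assume both components are essential in $\boundary J$. View $(J, P)$ as the exterior of an $m/n$ rational tangle, as set up before \autoref*{P-in-J}. The two boundary curves of $A$ are disjoint essential curves in $P$, a pants, so they are parallel to each other and each parallel to one of $\boundary_1 P$, $\boundary_2 P$, $\boundary_3 P$. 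If $A$ is $\boundary$--parallel in $J$ we are done (the parallelism can be pushed to lie in $P$, since the boundary curves already lie in $P$ and the product region between $A$ and $\boundary J$ meets $\boundary J$ in an annulus which we may take in $P$). So suppose $A$ is essential, i.e. incompressible and not $\boundary$--parallel. Then $A$ is in particular $\boundary$--incompressible: a $\boundary$--compressing disk for $A$ would, after surgery, produce a compressing disk for $A$ (contradicting incompressibility) or exhibit $\boundary$--parallelism. Now I would derive a contradiction with \autoref*{P-in-J}: tube $A$ to $P$ along one of its boundary annuli, or equivalently observe that an essential annulus with boundary on $P$, together with the incompressibility of $P$, would let us build either a compressing or $\boundary$--compressing disk for $P$ in $J$. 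Concretely, cut $J$ along $A$; since $J$ is a handlebody and $A$ is essential, this either disconnects $J$ or drops the genus, and in the resulting piece(s) the trace of $P$ is a planar surface that must compress or $\boundary$--compress, contradicting \autoref*{P-in-J} after reassembling. Alternatively, and more cleanly, note that an essential annulus in the rational tangle exterior $J$ with $\boundary A \subseteq P$ is impossible because such a $J$ is a genus-2 handlebody whose only essential annuli (up to isotopy) have boundary slopes incompatible with lying in $P$ unless $L$ is trivial — this is again extracted from \cite[Lemma 2.3]{Wu12} by the same $(r,s)$-disk bookkeeping used in \autoref*{P-in-J}, since a spanning annulus gives a pair of parallel $\boundary$--compressing arcs.

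The step I expect to be the main obstacle is making the "cut $J$ along $A$ and trace $P$" argument fully rigorous: after cutting, $P$ may be cut into several planar pieces, and one has to check that at least one such piece is genuinely essential (so that its compression or $\boundary$--compression survives the regluing and contradicts \autoref*{P-in-J}), ruling out the degenerate possibility that every piece is a disk or $\boundary$--parallel disk. The cleanest route is probably to reduce to the rational-tangle picture and run the $(r,s)$-disk count of \cite{Wu12} directly on a $\boundary$--compressing arc for $A$, thereby forcing $L$ to be trivial; this parallels \autoref*{P-in-J} and avoids the cut-and-reassemble bookkeeping. For the $N = H$ case the only subtlety is being careful that "incompressible annulus in $P \times I$ with boundary in $P \times \{0\}$" really is isotopic into $P \times \{0\}$, which follows from the standard fact that incompressible surfaces in $F \times I$ are isotopic to vertical annuli or horizontal copies of $F$, combined here with the boundary condition.
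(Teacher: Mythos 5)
There is a genuine gap, and it sits exactly where you predicted it would. Your $N=H$ case is fine (the product structure of $H\cong P\times I$ does the work), but for $N=J$ the argument breaks at the step where you try to rule out an essential annulus. First, the claim that an incompressible, non-$\boundary$-parallel annulus in $J$ must be $\boundary$--incompressible is false: $\boundary$--compressing an annulus produces a properly embedded \emph{disk}, not a compressing disk for $A$, and if that disk is essential in $J$ then $A$ is neither compressible nor $\boundary$--parallel. Indeed this claim contradicts \autoref*{sfce-in-hbody}, which guarantees every incompressible surface in a handlebody is $\boundary$--compressible; and genus 2 handlebodies genuinely do contain essential annuli (e.g.\ the Seifert annulus of a trefoil survives in the genus 2 handlebody obtained by drilling out an unknotting tunnel lying in one of the two Seifert solid tori), so essential annuli cannot be excluded by soft general arguments. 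Second, your proposed replacements --- cutting $J$ along $A$ and tracing $P$, or running the $(r,s)$--disk count of \cite{Wu12} on ``a $\boundary$--compressing arc for $A$'' --- are not worked out and do not obviously apply: Wu's lemmas govern disks whose boundaries lie on $\boundary J\cup\boundary P$, whereas a $\boundary$--compressing disk for $A$ has part of its boundary on $A$ itself. A smaller but real issue: two disjoint essential curves in a pants need not be parallel to each other (they may be parallel to different components of $\boundary P$), so the parallelism of the two components of $\boundary A$ in $P$ needs the observation that no two components of $\boundary P$ are homologous in $N$ (in $H$ because it is a product, in $J$ because attaching a 2--handle along one component turns the other two into meridians of a knot exterior).

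The idea you are missing is the paper's torus trick, which handles $H$ and $J$ uniformly. Once the homology observation forces $\boundary A$ to cobound an annulus $B\subseteq P$, one forms the torus $T=A\cup B$, pushes it slightly into $N$, and uses the fact that a torus in a handlebody must compress to one side. A compression on the side adjacent to $P$ is ruled out (it would yield a compressing disk isotopic into the incompressible $A$), so $T$ bounds a solid torus $S$ on the far side. Finally, the core of $B$ must be longitudinal in $S$, since otherwise attaching a 2--handle to $N$ along that core (a curve parallel to a component of $\boundary P$) would produce a reducible manifold, whereas the actual results of such attachments are solid tori or the exterior of $L$; longitudinality then makes $S$ a parallelism region carrying $A$ onto $B\subseteq P$. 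This last step is where the specific structure of $(N,P)$ enters, and it is the ingredient your sketch does not supply.
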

\begin{proof}
  Let $A$ be such an annulus.  No two components of $\boundary P$ are
  homologous in $H$ since it is a product.  Furthermore, after
  attaching a $2$--handle to $J$ along a component of $\boundary P$,
  the other two components of $\boundary P$ become meridians in a knot
  exterior.  Therefore no two components of $\boundary P$ are
  homologous in $N$, and so $\boundary A$ bounds an annulus
  $B\subseteq P$.  The surface $A\cup B$ is a torus $T$.  Isotop $T$
  slightly inside $N$ to obtain an embedded torus.  There is an
  annulus $C$ so that $\boundary_1C$ lies on $P$ and $\boundary_2C$ is
  a core of $B$.  Since a core of $C$ is isotopic to a component of
  $\boundary P$ in $N$ and $P$ is incompressible in $N$, $C$ must be
  incompressible in $N$.

  A torus in a handlebody must compress to one side.  If $T$ were
  compressible to the side containing $C$, then using an innermost
  disk/outermost arc argument we could find a compressing disk $D$ for
  $T$ not meeting $A$.  Therefore we could isotop $D$ so that
  $\boundary D\subseteq A$.  But $A$ is incompressible, and so $T$
  must be compressible to the side not containing $C$.  We conclude
  that $T$ bounds a solid torus $S$ on this side.

  If the core of $B$ were not longitudinal in $S$, we would
  obtain a reducible manifold after attaching a $2$--handle to $N$
  along this curve.  Examining $J$ and $H$ it is clear that this does
  not happen, and therefore we can isotop $A$ to $B$ through $S$.
\end{proof}

\begin{lemma}
  Every $\boundary$--compressing disk for $P$ in $H\cong P\times I$ is
  isotopic to one of the six disks of the form $\lambda\times I$ for
  an essential arc $\lambda\subseteq P$.
\end{lemma}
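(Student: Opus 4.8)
The plan is to exploit the product structure $H \cong P \times I$ directly. A $\boundary$--compressing disk $D$ for $P$ in $H$ meets $P$ in a single essential arc $\alpha \subseteq P$, and meets the rest of $\boundary H = R$ (where $R = \closure{\boundary H \setminus P}$) in an arc $\beta$. First I would put $D$ in a position minimizing $|D \cap (P \times \{1/2\})|$ and more generally make $D$ transverse to the product foliation $P \times \{t\}$. Since $P \times \{0\}$ and $P \times \{1\}$ are the two copies of $P$ in the doubled picture (more precisely, $\boundary H$ is $P \times \{0\} \cup R' \cup P\times\{1\}$ where the two ``vertical'' copies of $P$ together make up $P$ in $\boundary H$, and $R$ is made up of the annuli $\partial P \times I$ together with... ) — here I need to be careful, since $H \cong P \times I$ has boundary a single genus $2$ surface, and $P$ sits inside it as, say, $P \times \{1/2\}$ won't be in the boundary; rather $P$ should be identified with $P \times \{0\}$ and $R$ with $(P \times \{1\}) \cup (\partial P \times I)$. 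So the cleaner statement is: $\boundary H = P \cup R$ with $P = P\times\{0\}$.

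With that setup, the key step is a standard innermost-disk / outermost-arc argument to make $D$ disjoint from the annuli $\partial P \times I$ (the vertical part of $R$), after which $D$ is a disk in $P \times I$ with $\boundary D \subseteq (P \times \{0\}) \cup (P \times \{1\})$, meeting $P \times \{0\}$ in the essential arc $\alpha$ and $P \times \{1\}$ in $\beta$. Then I would consider the function $D \to I$ given by projection to the $I$ factor; after isotopy rel $\boundary$ to remove critical points (a Morse-theory / innermost argument, using that $D$ is a disk and $P$ is incompressible so no closed level curves can be essential), $D$ becomes a union of level arcs, i.e. $D$ is isotopic to $\lambda \times I$ for an arc $\lambda \subseteq P$, and $\lambda = \alpha$ must be essential in $P$ since $\boundary D \cap P = \alpha$ is essential. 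Finally, essential arcs in a $3$--punctured sphere fall into exactly six isotopy classes (two arcs between each unordered pair of the three boundary components, and the arcs with both endpoints on the same boundary component are all $\boundary$--parallel hence inessential; wait — in a pants, an arc with both endpoints on one boundary component \emph{can} be essential, separating that component from exactly one of the other two). So I would enumerate: an essential arc in $P$ has endpoints on $\{\boundary_i P, \boundary_j P\}$; there are three choices of unordered pair with endpoints on distinct components giving one class each, plus three classes of arcs with both endpoints on a single $\boundary_i P$ (one for each $i$, cutting off a neighborhood of that component from the rest), for a total of six; each yields exactly one $\boundary$--compressing disk $\lambda \times I$ up to isotopy.

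The main obstacle I expect is the Morse-theoretic straightening of $D$ with respect to the $I$--projection: one must argue that all critical points can be cancelled, using incompressibility of $P$ in $H$ to kill closed level curves and the fact that $\boundary D \cap R$ is a single arc together with an outermost-arc argument to cancel saddles, and one must be careful that the straightening is an ambient isotopy fixing $\boundary D$ (or at least fixing $\alpha \subseteq P$). A secondary point requiring care is the bookkeeping of exactly which six arc classes arise and that distinct classes give non-isotopic disks — this follows because $\lambda \times I$ and $\lambda' \times I$ are isotopic in $H$ only if $\lambda$ and $\lambda'$ are isotopic in $P$ (restrict the isotopy to $P \times \{0\}$), so the count of six is sharp.
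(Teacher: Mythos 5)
Your strategy --- exploit the product structure directly and straighten $D$ via the height function on $P\times I$ --- is genuinely different from the paper's argument, which fixes the isotopy class of the essential arc $\boundary D\cap P$, takes the model product disk $\lambda\times I$ in that class as a competitor, and runs a minimality/outermost-arc exchange on their intersection to show the two disks are parallel. Your route is the standard classification of essential surfaces in $I$--bundles and can be made to work, but as written it has two concrete problems.

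First, the normalization step fails as stated: you cannot make $D$ disjoint from the vertical annuli $\boundary P\times I$. The arc $\alpha=\boundary D\cap P$ is essential in $P=P\times\{0\}$, so its endpoints lie on $\boundary P\times\{0\}$, which is part of the boundary of those annuli; hence $\boundary D$必 enters $\boundary P\times I$ at both endpoints of $\alpha$. The correct normalization is that $\boundary D$ meets these annuli only in essential (vertical) arcs; one then observes that the vertical arcs separate $\boundary D$ into segments alternating between $P\times\{0\}$ and $P\times\{1\}$, so the hypothesis that $\boundary D\cap P$ is a single arc forces exactly two vertical arcs. Second, the Morse-theoretic straightening --- which you yourself flag as the main obstacle --- is exactly where the content of the lemma lives, and it is only asserted. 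Removing closed level circles using incompressibility of the parallel copies $P\times\{t\}$ is fine, but cancelling the saddles of the height function relative to a boundary that is partly horizontal (the arcs $\alpha$ and $\beta$) and partly vertical is not a one-line ``standard'' step; the usual proofs of the $I$--bundle classification sidestep it by intersecting with vertical rectangles over an arc system for $P$ and performing an exchange argument there --- which is, in effect, what the paper's proof does directly. Your enumeration of the six isotopy classes of essential arcs in a pair of pants (three joining distinct boundary components, three with both endpoints on a single component) is correct, as is the remark that the latter are essential despite being separating.

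(Note: replace the stray non-LaTeX character above --- the sentence should read ``hence $\boundary D$ must enter $\boundary P\times I$ at both endpoints of $\alpha$.'')
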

\begin{proof}
  Suppose we have two $\boundary$--compressing disks $D_1$ and $D_2$
  which are not isotopic but which meet $P$ in arcs of the same
  isotopy class.  Among all such disks, choose $D_1$ and $D_2$ to
  minimize $|D_1\cap D_2|$.  If $D_1\cap D_2=\emptyset$, then after
  cutting $H$ along $D_1$ we can isotop $\boundary D_2$ to lie in
  $\boundary H\setminus P$.  This surface is incompressible, so $D_2$
  must be trivial here.  This means that $D_2$ is parallel to $D_1$ in
  $H$.

  So assume that $D_1\cap D_2\neq\emptyset$.  Since $\boundary D_1\cap
  P$ and $\boundary D_2\cap P$ are of the same class in $P$, we may
  isotop $D_1$ and $D_2$ so that $\boundary D_1\cap\boundary D_2\cap
  P$ is empty.  This isotopy does not introduce new intersections, so
  consider an arc $\alpha$ of $D_1\cap D_2$ which is outermost in
  $D_1$.  This arc cuts off a subdisk $\delta$ of $D_1$ whose interior
  does not meet $D_2$.  We may surger $D_2$ along $\delta$ to obtain a
  new $\boundary$--compressing disk $D_2'$ with $D_2\cap P=D_2'\cap
  P$.  This disk is disjoint from $D_2$ and has at least one fewer arc
  of intersection with $D_1$, and so both pairs $(D_2, D_2')$ and
  $(D_1, D_2')$ are parallel.  This means that $D_1$ and $D_2$ are
  also parallel.
\end{proof}

\begin{lemma}\label{boundary-compressing-disks-meet-K}
  Every $\boundary$--compressing disk for $P$ in $H$ meets $K$ at 
  least once.
\end{lemma}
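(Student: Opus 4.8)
The plan is to use the previous lemma, which classifies every $\boundary$--compressing disk for $P$ in $H \cong P \times I$ as being isotopic to one of the six disks $\lambda \times I$ where $\lambda \subseteq P$ is one of the (six isotopy classes of) essential arcs in the pants $P$. So it suffices to show that for each such arc $\lambda$, the disk $\lambda \times I$ cannot be isotoped off $K$. Equivalently, I must show that $K$, which sits in the interior of $H = P \times I$, meets each of these six product disks, and cannot be isotoped (as a knot in the handlebody, not rel the disk) to avoid it — but actually the cleanest statement is just: after isotoping $D = \lambda \times I$ and $K$ to minimize intersections, the count is positive.

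The key tool is Lemma~\ref{attaching-2-h} together with the disk $E$ and its intersection pattern with $P$ described in Lemma~\ref{minimum-of-E-cap-P} and Figures~\ref{fig:E}, \ref{fig:PR}. First I would recall that the six essential arcs in $P$ fall into three pairs: one arc $\lambda_{12}$ joining $\boundary_1 P$ to $\boundary_2 P$, one arc $\lambda_{13}$ joining $\boundary_1 P$ to $\boundary_3 P$, one arc $\lambda_{23}$ joining $\boundary_2 P$ to $\boundary_3 P$, and three more arcs $\lambda_{ii}$ with both endpoints on $\boundary_i P$ (separating the other two boundary components). For each of these, $\lambda \times I$ is a meridian disk of $H$, and the count $|\,(\lambda\times I)\cap K\,|$ can be read off as the algebraic/geometric intersection number of $K$ with a properly embedded arc on $\boundary H$; by Lemma~\ref{attaching-2-h} these numbers are governed by $p$, $q$, and $q-p$, all of which are positive since $q > p > 1$. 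Concretely: cutting $H$ along $\lambda_{13}\times I$ is the same as attaching a $2$--handle along $\boundary_3 P$ and then keeping track of $K$, and the resulting knot meets a meridian $q-p > 0$ times, so $\lambda_{13}\times I$ meets $K$; similarly $\lambda_{23}\times I$ relates to the count $q$ (or $p$), and $\lambda_{12}\times I$ is isotopic to the disk $D$ marking $K$ as primitive, meeting $K$ exactly once. The three "boomerang" arcs $\lambda_{ii}$ are handled by the same attaching-a-$2$--handle bookkeeping, or by observing directly from Figure~\ref{fig:E} that a disk $\lambda_{ii}\times I$ separates two of the boundary-pattern groups of $\boundary E$ and hence must be crossed by $K$ (since $K$ is isotopic to a curve running once through the primitive disk $D$ and cannot be made disjoint from both relevant families).

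I would organize the write-up as: (1) invoke the preceding lemma to reduce to the six product disks; (2) for the three arcs joining distinct boundary components, identify $\lambda\times I$ with (an isotope of) the cocore of the $2$--handle produced by attaching along the opposite boundary component, and apply Lemma~\ref{attaching-2-h} to get intersection numbers $p, q, q-p$, all nonzero; (3) for the three arcs with both endpoints on a single $\boundary_i P$, note that $\lambda_{ii}\times I$ together with $\boundary_i P$ cobounds a region containing a torus boundary-parallel to $\boundary N(c)$ or a genuine handle of $H$, so that $K$ — which by construction winds nontrivially in $H$ (e.g. it is a $(p,q)$ curve) — has nonzero algebraic intersection with it; one computes this intersection number and sees it is $p$, $q$, or $q-p$ as well. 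The main obstacle I anticipate is step (3): making the identification between the "both-endpoints-on-$\boundary_i P$" disks and a clean homological intersection count is the least automatic part, and I would likely handle it by pushing everything into the picture of $H$ embedded in $S^3$ from Figure~\ref{fig:involution}, where each such disk becomes visibly a meridian of one of the two solid tori obtained by $2$--handle attachment, so that the count is again a linking number equal to $p$, $q$, or $q-p$ — all positive under the hypothesis $q > p > 1$.
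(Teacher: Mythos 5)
Your step (1)--(2) reduction is sound and close in spirit to the paper's argument: each of the three nonseparating product disks $\lambda\times I$ (for $\lambda$ joining distinct components of $\boundary P$) is a meridian disk of the solid torus $H[\boundary_i P]$ for the appropriate $i$, so by \autoref*{attaching-2-h} its algebraic intersection number with $K$ is $\pm p$, $\pm q$, or $\pm(q-p)$, all nonzero since $q>p>1$; algebraic intersection is an isotopy invariant, so no representative of that disk's isotopy class can miss $K$. (One small slip there: $\lambda_{12}\times I$ is the meridian disk of $H[\boundary_3 P]$ and meets $K$ algebraically $q-p$ times; it is not in general the primitivizing disk meeting $K$ exactly once. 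This does not affect your conclusion.)

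The genuine gap is your step (3), the three separating disks $\lambda_{ii}\times I$. A separating essential disk in a handlebody represents $0$ in $H_2(H,\boundary H)$: its boundary separates $\boundary H$ and is therefore null-homologous there, and since $H_2(H)=0$ the map $H_2(H,\boundary H)\to H_1(\boundary H)$ is injective. Consequently the algebraic intersection number of \emph{any} closed curve with a separating disk is zero, so no homological or linking-number computation of the kind you propose can show that $K$ meets such a disk; and these disks are not meridians of the solid tori obtained by attaching $2$--handles along components of $\boundary P$ --- those meridians are exactly the nonseparating product disks you already handled. The correct finish, which is what the paper does, is indirect: if a separating $\boundary$--compressing disk $D$ for $P$ misses $K$, it cuts $H$ into two solid tori, and the connected curve $K$ lies in one of them; but each of the two solid tori contains, as a meridian disk disjoint from $D$, one of the nonseparating product disks, so $K$ would be disjoint from the one lying in the other solid torus, contradicting the nonzero algebraic count from your step (2). (For nonseparating $\boundary$--compressing disks missing $K$ the paper argues differently, via the uniqueness of $E$ (\autoref*{E-is-unique}) together with \autoref*{minimum-of-E-cap-P}; your algebraic-count route for that case is equally valid.)
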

\begin{proof}
  Let $D$ be an essential disk in $H$ which does not meet $K$.  If $D$
  is a $\boundary$--compressing disk for $P$, then
  by~\autoref*{E-is-unique} and~\autoref*{minimum-of-E-cap-P} it must
  be separating.  Suppose that $\boundary D$ meets $\boundary_i P$.
  After compressing along $D$ we obtain two solid tori $S_1$ and
  $S_2$.  Let $D_j$ be the nonseparating $\boundary$--compressing disk
  for $P$ which does not meet $\boundary_jP$.  This disk is unique up
  to isotopy by the previous lemma.  By~\autoref*{attaching-2-h} each
  of these disks meets $K$ at least once.  But $D_x$ and $D_y$ are
  meridian disks for $S_1$ and $S_2$, where $\set{i, x,
    y}=\set{1,2,3}$.  This is impossible since $K$ is contained in
  either $S_1$ or $S_2$.
\end{proof}

\begin{prop}\label{K-atoroidal}
  Both the spaces $\closure{H\setminus N(K)}$ and $\closure{M\setminus
    N(K)}$ are irreducible and atoroidal.  The boundary of $N(K)$ is
  incompressible in $\closure{H\setminus N(K)}$ and $\closure{M\setminus
    N(K)}$.  There are no essential annuli in $\closure{M\setminus N(K)}$.
\end{prop}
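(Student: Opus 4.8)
The plan is to analyze essential spheres, tori, and annuli in $\closure{M\setminus N(K)}$ by cutting along the incompressible surface $P$, which separates $M$ into the handlebody $H$ (containing $K$) and the handlebody $J$. Since $P$ is incompressible and $\boundary$--incompressible in $J$ by \autoref{P-in-J}, and incompressible in $H$ (as $H\cong P\times I$), the surface $P$ remains incompressible in $\closure{M\setminus N(K)}$; I would first record this, together with irreducibility of $\closure{M\setminus N(K)}$: any essential sphere would have to lie (after isotopy) in $H\setminus N(K)$ or $J$, and both of those are irreducible (a handlebody is irreducible, and $H\setminus N(K)$ is a knot exterior in a handlebody $\cong P\times I$, hence irreducible because $K$ is homotopically essential — it meets a meridian disk of $H$, so $\closure{H\setminus N(K)}$ has no nonseparating sphere and no separating essential one). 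Incompressibility of $\boundary N(K)$ in $\closure{H\setminus N(K)}$ follows since a compressing disk would exhibit $K$ as contained in a ball in $H$, contradicting \autoref{attaching-2-h} (the image of $K$ is a nontrivially-winding knot in a solid torus after capping, in particular $K$ is not null-homotopic). Incompressibility in $\closure{M\setminus N(K)}$ then follows from the $H$ case since any compressing disk can be pushed off $P$.

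For atoroidality, let $T$ be an essential torus in $\closure{M\setminus N(K)}$. Isotope $T$ to meet $P$ minimally. If $T\cap P=\emptyset$, then $T$ lies in $\closure{H\setminus N(K)}$ or in $J$; the latter is a handlebody so $T$ compresses there, and since $T$ is incompressible in $\closure{M\setminus N(K)}$ the compression must push $T$ across $P$, reducing $|T\cap P|=0$ only if $T$ was compressible — so $T$ bounds a solid torus in $J$ on one side, and one argues as in \autoref{no-annuli-on-P} that its core is a component of $\boundary P$ or is boundary-parallel, making $T$ inessential; if $T\subseteq\closure{H\setminus N(K)}$, a torus in $H\setminus N(K)$ either bounds a solid torus (handle) or a neighborhood of $K$, both giving $T$ inessential (the latter because $\boundary N(K)$ is incompressible but $T$ parallel to it is not essential in the exterior). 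If $T\cap P\neq\emptyset$, then $T\cap P$ is a collection of essential circles in $P$ (no circle bounds a disk in $P$ by minimality plus incompressibility of $P$); cutting $T$ along these gives incompressible annuli in $J$ and in $\closure{H\setminus N(K)}$ with boundary on $P$. By \autoref{no-annuli-on-P} the pieces in $J$ and those in $H$ with boundary entirely on $P$ can be isotoped into $P$, contradicting minimality of $|T\cap P|$. The remaining case is annuli in $\closure{H\setminus N(K)}$ with one or both boundary circles on $\boundary N(K)$; here I would use \autoref{attaching-2-h} together with the structure $H\cong P\times I$ to see $K$ meets every meridian disk of $H$ essentially, so no such annulus can close up to an essential torus. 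This step — the bookkeeping of how annular pieces in $H$ that touch $N(K)$ can reassemble — is where I expect the main difficulty, and it will need the explicit picture of $E\cap P$ from \autoref{minimum-of-E-cap-P}.

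For the absence of essential annuli in $\closure{M\setminus N(K)}$, the argument is parallel: given an essential annulus $A$, isotope it to meet $P$ minimally. If $A\cap P=\emptyset$ then $A\subseteq J$ (a handlebody, so $A$ is $\boundary$--compressible, and $\boundary$--compressing it across $P$ contradicts minimality unless $A$ was $\boundary$--parallel in $J$, hence inessential in $M$), or $A\subseteq\closure{H\setminus N(K)}$, where an essential annulus would have at least one boundary on $\boundary N(K)$ (if both boundaries are on $\boundary H$ it is $\boundary$--parallel since $H\cong P\times I$ and one checks in $\boundary H\setminus P = R$), and an annulus from $\boundary N(K)$ to itself or to $\boundary H$ would again contradict that $K$ winds essentially — \autoref{E-is-unique} controls the nonseparating disks, forcing such an annulus to be inessential. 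If $A\cap P\neq\emptyset$, cutting along the essential circles $A\cap P$ produces incompressible annuli in $J$ and incompressible annuli in $\closure{H\setminus N(K)}$ with boundary on $P$; \autoref{no-annuli-on-P} pushes the $J$-pieces and the purely-$P$ $H$-pieces into $P$, again contradicting minimality, leaving only $H$-annuli with a boundary circle on $\boundary N(K)$, dispatched as before. Throughout, the one genuinely delicate point is verifying there is no essential annulus in $\closure{H\setminus N(K)}$ with both boundary components on $\boundary N(K)$ (a cabling annulus); I would rule this out using \autoref{E-is-unique} (the unique nonseparating disk $E$ misses $K$, so $K$ is not a cable in $H$) and \autoref{attaching-2-h}.
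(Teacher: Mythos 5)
Your overall architecture matches the paper's: minimize intersection with $P$, use incompressibility of $P$ on both sides together with \autoref*{no-annuli-on-P} to push a putative essential torus or annulus off $P$, and then analyze what remains inside $J$ and inside $\closure{H\setminus N(K)}$. The gaps are exactly in the places where that final analysis has content, and in both places the missing ingredient is the same: you never use that $K$ is isotopic to a \emph{primitive} curve on $\boundary H$. For the torus case you assert that a torus in $\closure{H\setminus N(K)}$ ``either bounds a solid torus (handle) or a neighborhood of $K$.'' That dichotomy is false for a general knot in a handlebody (a torus can bound a knotted solid torus, or a cube-with-knotted-hole containing $K$), and nothing you cite excludes those possibilities. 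The paper's point is that primitivity of $K$ makes $\closure{H\setminus N(K)}$ a compression body, which contains no essential torus. Relatedly, the ``remaining case'' you flag as the main difficulty --- annular pieces of $T$ meeting $\boundary N(K)$ --- is vacuous: $T$ is closed and disjoint from $\boundary N(K)$, so every component of $T\setminus P$ has boundary on $P$.

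More seriously, the annulus with one boundary component on $\boundary N(K)$ and the other on $R=\closure{\boundary H\setminus P}$ is dismissed with ``would again contradict that $K$ winds essentially,'' which is not an argument; this is the substantive case of the whole proposition. The paper splits on the slope of $\boundary_2A$ on $\boundary N(K)$: if meridional, capping off with a meridian disk produces a compressing disk for $R$; if it meets the meridian more than once, the curve $\boundary_1A$ is conjugate to a proper power of $K$ in $\pi_1(H)$, impossible since both it and $K$ are primitive elements; and if longitudinal, $A$ is an isotopy of $K$ into $R$, which is ruled out by \autoref*{boundary-compressing-disks-meet-K}. None of this follows from \autoref*{E-is-unique} or \autoref*{attaching-2-h} alone, which is all you propose to use. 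You would need to supply this case analysis (and, as you rightly note, the case of both boundary components on $\boundary N(K)$, which the paper itself leaves implicit) before the proof is complete. You also pass over the arcs of $A\cap P$ when $\boundary A$ crosses $\boundary P$, which the paper handles with the outermost-arc half of its argument.
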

\begin{proof}
  Let $F$ be a sphere in $H$ which does not bound a ball in the
  complement of $K$.  Then either $F$ does not bound a ball in $H$
  (impossible since handlebodies are irreducible) or $F$ bounds a ball
  containing $K$.  However, there are meridian disks in $H$ which have
  nonzero algebraic intersection with $K$.


  Suppose that $T\subseteq \closure{M\setminus N(K)}$ is an essential
  torus with $|T\cap P|$ minimal.  If $|T\cap P|>0$, examine the
  intersections $T\cap P$.  By the minimality of $|T\cap P|$ and
  incompressibility of $T$ and $P$, we may assume that there are no
  \sccs of intersection of $T\cap P$ which are trivial in $T$ or $P$.
  So suppose that there is a curve of intersection of $T\cap P$ which
  is essential in both $P$ and $T$.  Since $P$ is separating and $T$
  contains no trivial curves of intersection, we can find an annulus
  $A\subseteq T$ such that $\interior{A}$ has no intersections with
  $P$.  By~\autoref*{no-annuli-on-P}, we may isotop $T$ to reduce
  $|T\cap P|$.  This contradiction shows that $T\cap P=\emptyset$ and
  therefore $T$ lies in $H$.  However, this is impossible since $K$ is
  primitive in $H$.

  If $\boundary N(K)$ is compressible in $\closure{H\setminus N(K)}$,
  we see that either $K$ is contained in a ball or $H$ is reducible
  after compressing.  The same holds for $\closure{M\setminus N(K)}$.
  This contradicts the irreducibility of these spaces.

  Finally, suppose that $A$ is an essential annulus in
  $\closure{M\setminus N(K)}$ chosen to minimize $|A\cap P|$.  Using
  an innermost curve/outermost arc argument
  and~\autoref*{no-annuli-on-P} we can show that $A\cap P=\emptyset$.
  Suppose then that $A$ has one boundary component, $\boundary_1 A$,
  in $R$ and the other, $\boundary_2 A$, in $\boundary N(K)$.  Since
  $K$ does not bound a disk in $H$, The component $\boundary_2A$
  cannot be meridional on $\boundary N(K)$ because $A$ would become a
  compressing disk for $R$ under the trivial surgery.  Therefore
  $\boundary_2A$ meets the meridian of $\boundary N(K)$ at least once,
  and $\boundary_1A$ is isotopic to a component $\gamma$ of $\boundary
  P$.

  If $\boundary_2A$ meets a meridian of $\boundary N(K)$ more than
  once, then $\gamma$ is conjugate to an $n$-th power of $K$ in
  $\pi_1(H)$ for $n>1$.  Since $\gamma$ and $K$ are both primitive in
  $\pi_1(H)$, this is impossible.  Therefore $\boundary_2A$ is
  longitudinal on $\boundary N(K)$, and $A$ describes an isotopy of
  $K$ to $\gamma\subseteq R$.  This is impossible
  by~\autoref*{boundary-compressing-disks-meet-K}.

  If $A$ has both components on $\boundary M$, then $A$ lies in $J$ or
  $H$ disjoint from $P$.  But then $A$ is either an essential annulus
  in a hyperbolic knot exterior or an essential annulus in a
  compression body which is not vertical.  Both are impossible.

\end{proof}

\begin{lemma}\label{min-separating-disks}
  The boundary of every essential disk in $\closure{H\setminus N(K)}$
  meets $P$ in at least three arcs. 
\end{lemma}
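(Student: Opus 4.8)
The plan is to let $D$ be an essential disk in $H'=\closure{H\setminus N(K)}$, isotoped within $H'$ so that $|\partial D\cap\partial P|$ is minimal, and to show this forces at least three arcs of $\partial D\cap P$. Since $\partial H'=\partial H\sqcup\partial N(K)$ and $\partial N(K)$ is incompressible in $H'$ by~\autoref*{K-atoroidal}, the curve $\partial D$ cannot lie on $\partial N(K)$, so $\partial D\subseteq P\cup R$. An innermost-bigon argument — using that $P$ and $R$ have disjoint interiors, so an innermost bigon cut off by $\partial D$ on $\partial H$ lies in $P$ or in $R$ and is disjoint from the rest of $\partial D$ — shows that, in minimal position, every arc of $\partial D\cap P$ is essential in $P$ and every arc of $\partial D\cap R$ is essential in $R$; in particular $k:=|\partial D\cap P|=|\partial D\cap R|$. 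If $k=0$, then $\partial D$ lies entirely in $P$ or in $R$, both of which are incompressible in $H$ (for $R$ use the involution $\tau$), so $\partial D$ bounds a disk in $\partial H'$ and $D$ is not essential. Hence $k\ge 1$, and it remains to exclude $k=1$ and $k=2$.

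Next I would show $D$ is essential in $H$ itself. If not, $\partial D$ bounds a disk $D_0\subseteq\partial H$, so $D\cup D_0$ bounds a ball $B\subseteq H$; as $K$ misses $D$ and $D_0$, either $K\subseteq B$ or $D$ is $\partial$--parallel in $H'$. The latter contradicts essentiality of $D$, and the former forces $[K]=0$ in $H_1(H)$, contradicting the fact that $K$ meets the meridian disk $D_m$ algebraically $p\neq 0$ times. So $D$ is an essential disk of $H$ disjoint from $K$ whose boundary meets $P$ in $k\le 2$ essential arcs.

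For $k=1$: $\partial D$ meets $P$ in a single essential arc, so $D$ is by definition a $\partial$--compressing disk for $P$ in $H$. By~\autoref*{boundary-compressing-disks-meet-K} it must meet $K$, contradicting $D\subseteq H'$.

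The case $k=2$ is the main obstacle. Write $\partial D=a_1\cup r_1\cup a_2\cup r_2$ in cyclic order, with $a_i\subseteq P$ and $r_i\subseteq R$ essential arcs. If $a_1$ and $a_2$ are isotopic in $P$, let $B_P\subseteq P$ be the rectangle they cobound; then $A=D\cup B_P$, pushed slightly off $\partial H$, is a properly embedded annulus in $H$, disjoint from $K$, with $\partial A$ lying in $R$. The curves $\partial_iP$ are non-separating in $\partial H$, hence homologically nontrivial, which forces $A$ to be incompressible; applying~\autoref*{no-annuli-on-P} (to $R$, via $\tau$) then isotopes $A$ into $R$, and cutting this now $\partial$--parallel annulus along the spanning arcs $a_1,a_2$ recovers $D$ as a $\partial$--parallel disk of $H$, contradicting the previous step. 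If $a_1$ and $a_2$ are not isotopic in $P$, I would run a parallel cut-and-paste between $D$ and the disk or annulus cut off by $a_1\cup a_2$ in $P$ (or by $r_1\cup r_2$ in $R$) to produce either a compressing disk for $P$ or for $R$ — contradicting incompressibility in $H$ — or a $\partial$--compressing disk for $P$ in $H$ that misses $K$, contradicting~\autoref*{boundary-compressing-disks-meet-K}. Keeping track of exactly which components of $\partial P$ the arcs $a_i$ and $r_i$ hit, so that this final subcase genuinely closes off, is where the real work lies.
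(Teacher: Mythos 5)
There is a genuine gap, and it sits exactly where the real content of the lemma lives: the case $k=2$. Your ``isotopic arcs'' subcase ends with the claim that once $A=D\cup B_P$ is isotoped into $R$, cutting along $a_1,a_2$ ``recovers $D$ as a $\boundary$--parallel disk.'' That implication is false. Take $\lambda_1,\lambda_2$ parallel copies of an essential arc $\lambda\subseteq P$, let $D_i=\lambda_i\times I$ be the corresponding product disks, and tube them along an arc $\tau\subseteq R$ encircling $\boundary_3P$. The result is a separating \emph{essential} disk $D$ in $H$ meeting $P$ in two parallel essential arcs; yet $D\cup B_P$ is exactly an inessential disk (parallel to a disk in $R$) tubed to itself along $\tau$, hence an incompressible annulus with both boundary curves parallel to $\boundary_3P$ in $R$, which is $\boundary$--parallel. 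Recovering $D$ from the product structure on the region of parallelism replaces the two collars of $a_1,a_2$ in $D$ by two spanning rectangles of that solid torus, i.e.\ by disks isotopic to the product disks $\lambda_i\times I$; this is a slide of $D$ over two essential disks, not an isotopy, and it converts the essential $D$ into an inessential one. More tellingly, your $k=2$ argument never uses $K$ beyond the essentiality of $D$ in $H$, so if it worked it would show that \emph{no} essential disk in $H$ meets $P$ in two arcs --- which the example above refutes. Any correct proof must use $K$: the paper's proof does so by first exhibiting $D$ as two product disks tubed along an arc of $R$ (via an outermost--arc argument against auxiliary product disks $F_1,F_2$), and then showing that the resulting splitting of $H$ into solid tori forces $K$ to be a core of the solid torus containing it, contradicting~\autoref*{boundary-compressing-disks-meet-K}, \autoref*{K-atoroidal}, and the fact that $K$ is not isotopic into $\boundary H\setminus P$.

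Two further points. First, you explicitly leave the ``non-isotopic arcs'' subcase open (``where the real work lies''); in fact that subcase is vacuous, but only after you observe that $D$ must be separating --- a nonseparating disk disjoint from $K$ is isotopic to $E$ by~\autoref*{E-is-unique} and so meets $P$ in $p+q-1\ge 4$ arcs by~\autoref*{minimum-of-E-cap-P} --- and that two disjoint essential arcs in a pair of pants whose endpoints meet each component of $\boundary P$ an even number of times (forced by $\boundary D$ separating) are parallel. The same separating hypothesis is what guarantees $D\cup B_P$ is an annulus rather than a \mobius band, which you also do not check. Second, your justification of the incompressibility of $A$ (``the $\boundary_iP$ are non-separating'') is a non sequitur as written; the correct reason is that each component of $\boundary A$ is a union of an essential arc of $R$ and a subarc of $\boundary P$, hence is isotopic in $R$ to a component of $\boundary P$, and those components are primitive in $H$. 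These are repairable, but the collapse of the final step of the annulus argument is not: that is where an argument about $K$ is unavoidable.
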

\begin{proof}
  Let $D$ be a meridian disk for $H$ which does not meet $K$.
  By~\autoref*{boundary-compressing-disks-meet-K} and the
  incompressibility of $P$ in $H$, $D$ meets $P$ in at
  least two arcs.  If $D$ meets $P$ fewer than four times, it must be
  separating by~\autoref*{E-is-unique}
  and~\autoref*{minimum-of-E-cap-P}.  So let $D$ be a separating
  meridian disk for $H$ missing $K$ and meeting $P$ in two essential
  arcs $\lambda_1$ and $\lambda_2$.  The disk $D$ cuts $H$ into two
  solid tori $T$ and $T'$ with $K\subseteq T$.

  There are two arcs $\kappa_1$ and $\kappa_2$ parallel to $\lambda_1$
  so that each component of $P\setminus (\kappa_1\cup\kappa_2)$
  contains exactly one of $\lambda_1$ or $\lambda_2$.  Let $F_1$ and
  $F_2$ denote the product disks $\kappa_1\times I, \kappa_2\times
  I\subseteq P\times I$, and choose $D$ to minimize $|D\cap (F_1\cup
  F_2)|$ with the constraint that $\boundary D\cap P$ consists of the arcs
  $\lambda_1$ and $\lambda_2$.  Isotop $K$ to minimize $|K\cap(F_1\cup
  F_2)|$ while keeping $K\cap D=\emptyset$.  Since $F_1\cup F_2$
  separates $H$, $D\cap F_i$ is nonempty for at least one $i=1,2$.
  Without loss of generality assume $D\cap F_1\neq\emptyset$.

  We may assume that there are no \sccs of intersection in $D\cap
  F_1$.  Let $\alpha$ be an arc of $D\cap F_1$, outermost in $F_1$,
  cutting off a subdisk $F_1'$ of $F_1$ whose interior does not meet
  $D$.  If $\alpha$ cuts off a subdisk $\delta$ of $D$ which does not
  meet $P$, we could surger $D$ along $\delta$ to obtain a new disk
  meeting $P$ in the arcs $\lambda_1$ and $\lambda_2$ and having fewer
  intersections with $F_1$ and $F_2$.  Therefore $\alpha$ divides $D$
  into two halves each containing one of $\lambda_1$ or $\lambda_2$.
  Surgering along $F_1'$ we obtain two parallel
  $\boundary$--compressing disks for $P$ in $H$, and so we have shown
  that $D$ consists of $D_1$ and $D_2$, isotopic to product disks,
  tubed along an arc in $\boundary H\setminus P$.

  Reversing the surgery along $F_1'$ corresponds to adding a
  $1$--handle to one of the components of $H$ cut along $D_1\cup D_2$,
  and therefore $F_1'$ is nonseparating in its solid torus.  We claim
  that $F_1'$ is a meridian disk for $T$, the solid torus containing
  $K$.  If not, $F_1'\cap K=\emptyset$, and so $D_1$ and $D_2$ are
  $\boundary$--compressing disks for $P$ which do not meet $K$,
  impossible by~\autoref*{boundary-compressing-disks-meet-K}.  The
  exterior of $K$ in $H$ is irreducible and atoroidal
  by~\autoref*{K-atoroidal}, so $K$ is isotopic to a core of $T$ and
  therefore isotopic into $\boundary T\cap(\boundary H\setminus P)$.
  However, $K$ is clearly not isotopic in $H$ to a boundary component
  of $\boundary H\setminus P$, and so no such disks exist.
\end{proof}

\section{Surgery on the knots}
Fix a nontrivial $2$--bridge knot $L$ and two relatively prime
positive integers $p<q$, and let $(K,M) = (K^L_{p,q}, M^L_{p,q})$.  In
this section we show that the knot $K\subseteq M$ has exactly one
nontrivial surgery yielding a handlebody.

\begin{prop}\label{k-has-handlebody-surgery}
  The knot $K\subseteq M$ has a nontrivial handlebody surgery.
\end{prop}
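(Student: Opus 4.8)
The plan is to produce an explicit slope on $\boundary N(K)$ and show that filling along it yields a handlebody, by undoing the construction of $M$ one piece at a time. Recall that $M = J \cup_P H$, where $H \cong P \times I$ contains $K$ as a push-off of a primitive curve in $\boundary H$, marked by the meridian disk $D$ coming from the annulus $\pi \times I$. Since $K$ is primitive in $\boundary H$ and $D$ meets $K$ exactly once, the disk $D$ already tells us which surgery to perform: I would take the \emph{surface slope} of $K$ with respect to the surface obtained by isotoping $K$ back out to $\boundary H$ — equivalently, the slope on $\boundary N(K)$ determined by $\boundary N(K) \cap (\text{annulus along which }K\text{ was pushed in})$. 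Call this slope $\rho$. Because $K$ sits inside the collar $P \times I$ as a curve isotopic to a curve on $\boundary H = P \times \{0\}$, surgering along $\rho$ reglues $N(K)$ so that the resulting manifold is again $P \times I$ with a $1$–handle attached and then a $2$–handle attached along a curve meeting the cocore once; I expect this to give back a handlebody homeomorphic to $H$ with $K$ "erased."

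Concretely, the first step is to describe $\closure{H \setminus N(K)}$ and the filling $H(\rho)$ along $\rho$: since $K$ is isotopic into $\boundary H$ and the annulus realizing the push-in meets $\boundary N(K)$ in the slope $\rho$, the filled manifold $H(\rho)$ is homeomorphic to $H$ itself (the surgery simply reverses the isotopy-then-push-off, up to homeomorphism). This is the classical fact that "$\partial$–parallel knots have a handlebody surgery," applied inside $H$. The second step is to observe that this surgery is performed entirely inside $H \subseteq M$, so $M(\rho) = J \cup_P H(\rho) \cong J \cup_P H = M$, which is a handlebody by the construction (recall it was noted that $M = J \cup H_{p,q}$ is a handlebody because $H_{p,q} \cong P \times [0,1]$ is a collar of $P$ in $J$). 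Hence $M(\rho)$ is a handlebody.

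The remaining point is that $\rho$ is a \emph{nontrivial} (i.e.\ non-meridional) surgery slope — otherwise the statement is vacuous. Here I would argue that the meridional filling gives back $M$ with $K$ reinserted (the trivial surgery), and $M$ with $K$ a knot in its interior is not a handlebody: indeed $\closure{M \setminus N(K)}$ is irreducible with incompressible boundary by \autoref*{K-atoroidal}, so $\boundary N(K)$ being compressible would be needed for the trivial filling to be a handlebody, and it is not. Therefore the slope $\rho$ realizing the handlebody filling differs from the meridian, so the surgery is nontrivial.

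The main obstacle I anticipate is not any deep argument but rather bookkeeping: pinning down precisely which slope $\rho$ on $\boundary N(K)$ corresponds to "pushing $K$ back to $\boundary H$" in terms of the disk $D$ and the framing coming from $\boundary H$, and verifying carefully that the surgery is supported in a collar of $\boundary H$ disjoint from $P$ so that it does not disturb the gluing to $J$. Once the slope is correctly identified via the meridian disk $D$ (which meets $K$ once), the homeomorphism $H(\rho) \cong H$ is immediate and the rest follows from the product structure $H \cong P \times I$ and the fact that $M$ is already a handlebody.
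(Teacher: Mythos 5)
You have identified the correct slope: the paper's $\lambda$ is exactly the surface framing of $K$ coming from the product structure, i.e.\ the slope of $\boundary N(K)\cap(T^2\times\set{1/2})$. But the central step of your argument --- that $M(\rho)=J\cup_P H(\rho)\cong J\cup_P H=M$ because the surgery is ``supported in a collar of $\boundary H$ disjoint from $P$'' --- is false, and it is precisely the point the proposition has to address. The curve on $\boundary H$ to which $K$ is parallel is the $(p,q)$ curve in $T^2\times\set{0}\cap H$, and this curve meets $P$ (it crosses $l$, which is part of the spine of $P$, in $q$ points); $K$ is \emph{not} isotopic into $R=\closure{\boundary H\setminus P}\subseteq\boundary M$. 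Hence the parallelism annulus crosses $P$ and the surgery is not local relative to $P$. While $H(\rho)$ is indeed a genus two handlebody (every surgery on a primitive, hence core, curve of $H$ is --- this is how the paper gets $H(\alpha)$ a handlebody for all $\alpha$), there is no homeomorphism $H(\rho)\to H$ carrying $P$ to $P$: by \autoref*{2-handle-dual-knot}, $H(\rho)[\boundary_3P]$ is a Seifert fibre space with two exceptional fibres of orders $p,q>1$, whereas $H[\boundary_3P]$ is a solid torus by \autoref*{attaching-2-h}, and $H(\rho)[\boundary_1P]$ is a connected sum with a lens space of order $p$. So you cannot conclude that $J\cup_P H(\rho)\cong J\cup_P H$. (Your heuristic would equally ``prove'' that every slope at distance one from $\rho$ returns $M$, contradicting \autoref*{two-handlebody-fillings}.)

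What is actually needed, and what the paper supplies, is an argument that the new pair $(H(\lambda),P)$ still glues to $J$ to give a handlebody: after surgery the punctured torus $(T^2\times\set{1/2})\cap E(K)$ becomes a $\boundary$--compressing disk for $P$ in $H(\lambda)$ meeting $P$ in a single separating arc; $\boundary$--compressing along it cuts $H(\lambda)$ into two solid tori attached to $J$ along annuli whose cores are $\boundary_1P$ and $\boundary_2P$, and these are primitive (indeed jointly primitive) in $J$ by construction, so the union is a handlebody. Separately, your nontriviality paragraph is muddled: the meridional filling gives back $M$, which \emph{is} a handlebody, so there is nothing to rule out there; all that is needed (and it is immediate) is that the surface slope is a different slope from the meridian.
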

\begin{proof}
  We will first show that $K\subseteq H$ has a handlebody surgery
  under which $P$ becomes $\boundary$--compressible.  To see this,
  push $K$ inside $H$ so that it lies in the punctured torus
  $(T^2\times\set{1/2})\cap H$.  Let $E(K)$ be the exterior of $K$ in
  $H$, and let $S$ be the $3$--punctured sphere
  $(T^2\times\set{1/2})\cap E(K)$.  This surface defines a slope on
  $\boundary N(K)$.  Since $K$ is isotopic to a primitive curve on
  $\boundary H$, surgery at this slope yields a handlebody.  After
  surgery, $S$ becomes a disk meeting $P$ in a single essential
  separating arc.

  Perform the $\boundary$--compression and glue the resulting two
  solid tori to $J$.  The cores of the gluing annuli are both
  primitive in $J$ by construction, so the result is a handlebody.
  Reversing the $\boundary$--compression does not change this.
\end{proof}

Let $\mu$ be the meridional slope on $\boundary N(K)$, and let
$\lambda$ be the slope on $\boundary N(K)$ given
by~\autoref*{k-has-handlebody-surgery}.  Let $H(\alpha)$ denote the
space resulting from Dehn surgery on $K$ in $H$ along the $\alpha$
slope.  Since $K$ is isotopic to a primitive curve in $\boundary H$,
$H(\alpha)$ is a handlebody for every slope $\alpha$.

\begin{prop}\label{2-handle-dual-knot}
  After attaching a $2$--handle to $H(\lambda)$ along $\boundary_3P$,
  we obtain a Seifert fiber space over the disk with two exceptional
  fibers of order $p$ and $q$.
\end{prop}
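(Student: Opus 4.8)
The plan is as follows. Since $\boundary_3 P$ lies on $\boundary H$ while $K$ lies in the interior of $H$, attaching the $2$--handle along $\boundary_3 P$ and performing $\lambda$--surgery on $K$ are supported in disjoint submanifolds, so these operations commute and
\[
  H(\lambda)[\boundary_3 P]\;\cong\;\bigl(H[\boundary_3 P]\bigr)(\lambda).
\]
By~\autoref*{attaching-2-h}, $W:=H[\boundary_3 P]$ is a solid torus and $K\subseteq\interior W$ meets a meridian disk of $W$ algebraically $q-p$ times; the same lemma with $i=1$ and $i=2$ records that capping off $\boundary_1 P$ or $\boundary_2 P$ instead turns $K$ into a knot winding $p$, resp.\ $q$, times in a solid torus. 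It remains to identify $W(\lambda)$, and the content of the proposition is that it is the Seifert fibered space over the disk with two exceptional fibers of orders $p$ and $q$, equivalently the exterior of the $(p,q)$--torus knot.

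To carry this out I would work in the model $H=\closure{T^2\times I\setminus N(a)}$, in which $K$ is a $(p,q)$--curve on $\mathcal T=T^2\times\set{1/2}$ and, as in~\autoref*{k-has-handlebody-surgery}, $\lambda$ is the surface slope of $K$ with respect to $\mathcal T$ and $S=\mathcal T\cap\closure{H\setminus N(K)}$ is a pair of pants. After $\lambda$--surgery $S$ becomes a properly embedded disk $\widehat S$ in $H(\lambda)$ meeting $P$ in a single essential separating arc whose endpoints lie on $\boundary_3 P$; cutting $H(\lambda)$ along $\widehat S$ produces two solid tori $\mathcal W_0,\mathcal W_1$ which meet $P$ in annuli with cores isotopic to $\boundary_1 P$ and $\boundary_2 P$. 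Since $\boundary_3 P$ crosses $\boundary\widehat S$ in exactly two points, attaching the $2$--handle along $\boundary_3 P$ has the effect of replacing the gluing disk $\widehat S$ by an annulus $\widehat A$ --- obtained by banding $\widehat S$ over the $2$--handle --- so that $W(\lambda)=\mathcal W_0\cup_{\widehat A}\mathcal W_1$. One then checks, using the winding--number data of~\autoref*{attaching-2-h}, that $\widehat A$ is essential in each $\mathcal W_i$ and that its core runs $p$ times around one of the $\mathcal W_i$ and $q$ times around the other.

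Granting this, $W(\lambda)$ is a union of two solid tori along an essential annulus in their boundaries, which is exactly the Seifert fibered space over the disk with two exceptional fibers, of orders equal to those two winding numbers $p$ and $q$; this is the proposition. (As a consistency check, filling $\boundary N(K)$ along the meridian $\mu$ recovers the solid torus $W=H[\boundary_3 P]$, in keeping with $\closure{W\setminus N(K)}$ being the complement of an exceptional fiber in the $(p,q)$--torus knot exterior.)

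The main obstacle is the identification carried out in the second paragraph: making precise how $K$, the surface $\mathcal T$, and the slope $\lambda$ lie inside $W=H[\boundary_3 P]$, and in particular verifying that the $2$--handle along $\boundary_3 P$ really completes $\widehat S$ to the annulus $\widehat A$ with the stated winding numbers, with the orders paired as $\set{p,q}$ rather than, say, involving $q-p$. The difficulty is that $\boundary_3 P$ is not carried by a level surface of any product structure on $H$ --- it runs over both of the punctured--torus faces of $\boundary H$ and the annulus between them, along the arc $s$ --- so $\mathcal T$ does not pass cleanly to $W$. I would handle it by direct inspection of~\autoref*{fig:involution}, using the meridian disks $D_m$ and $D_l$ (which meet $K$ in $p$ and $q$ points) to locate the cores of $\mathcal W_0$ and $\mathcal W_1$ and tracking the arcs of $\boundary_3 P$ and $\boundary\widehat S$.
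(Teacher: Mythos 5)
Your overall architecture is sound and, for the final identification, genuinely different from the paper's: you exhibit $H(\lambda)[\boundary_3P]$ directly as two solid tori glued along an essential annulus (by banding the surgered disk $\widehat S$ over the $2$--handle), whereas the paper stops at the decomposition of $H(\lambda)$ into $T_1\cup_D T_2$, reads off a presentation $\presentation{x,y}{x^p=y^q}$ of $\pi_1$, and cites Stevens for the Seifert fibering. Your banding construction is a standard and correct maneuver, and the first step (the separating $\boundary$--compressing disk from~\autoref*{k-has-handlebody-surgery} cutting $H(\lambda)$ into two solid tori containing $\boundary_1P$ and $\boundary_2P$) coincides with the paper's.

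The gap is exactly the step you flag as ``the main obstacle'': establishing that the two winding numbers are $p$ and $q$. This is where the entire content of the proposition lives, and ``one then checks, using the winding--number data of~\autoref*{attaching-2-h}'' does not do it --- that lemma records the winding numbers of $K$ in the solid tori $H[\boundary_iP]$, not the winding numbers of $\boundary_1P$ and $\boundary_2P$ (equivalently of the core of your $\widehat A$) in the solid tori $T_1,T_2$ obtained by cutting the \emph{surgered} handlebody $H(\lambda)$. Your fallback of inspecting~\autoref*{fig:involution} via $D_m$ and $D_l$ is also problematic: those disks meet $K$ in $p$ and $q$ points, so they do not survive $\lambda$--surgery and cannot be used directly as meridian disks of $T_1$ and $T_2$. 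The paper's mechanism for this step is the one you are missing: in the solid torus $H[\boundary_1P]$ the knot $K$ is a $(p,q)$ curve and $\lambda$ is its surface slope, so $H(\lambda)[\boundary_1P]$ is a connected sum of a solid torus with a lens space of order $p$; since $H(\lambda)[\boundary_1P]=T_1[\boundary_1P]\cup_D T_2$, this forces $\algint{\boundary_1P}{\boundary D_1}=p$, and symmetrically $\algint{\boundary_2P}{\boundary D_2}=q$. With that input your annulus argument closes up; without it, the pairing $\set{p,q}$ (as opposed to something involving $q-p$) is asserted rather than proved.
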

\begin{proof}
  Recall from~\autoref*{k-has-handlebody-surgery} that there is a
  separating $\boundary$--compressing disk $D$ for $P$ in
  $H(\lambda)$.  This disk divides $H(\lambda)$ into two solid tori:
  $T_1$ contains $\boundary_1P$ and $T_2$ contains
  $\boundary_2P$.  Attaching a $2$--handle to $H$ along
  $\boundary_1P$, we obtain a knot in a solid torus isotopic to a
  $(p,q)$ curve with respect to $m$ and $l$.  The
  surgery slope $\lambda$ is the surface slope with respect to this
  embedding, and so surgery yields the connect sum of a solid torus
  and a lens space of order $p$.  Similarly, attaching a $2$--handle
  to $H(\lambda)$ along $\boundary_2P$ yields the connect sum of a
  solid torus and a lens space of order $q$.  

  Let $D_1$ and $D_2$ be meridian disks for $T_1$ and $T_2$ disjoint
  from $D$.  Clearly $\algint{\boundary_1P}{\boundary D_1} = p$ and
  $\algint{\boundary_2P}{\boundary D_2}=q$, and so $H(\lambda)$
  appears as in~\autoref*{fig:H-dual}.  In this figure,
  $\boundary_1P$ runs $p$ times around the left handle and
  $\boundary_2P$ runs $q$ times around the right handle. With respect
  to the basis for $\pi_1(H(\lambda))$ dual to $\set{D_1,D_2}$, the
  fundamental group of $H(\lambda)[\boundary_3P]$ is
  $\presentation{x,y}{x^p=y^q}$.  This space is a Seifert fiber space
  over the disk with exceptional fibers of order $p$ and
  $q$~\cite{Stevens}.
\end{proof}

\begin{figure}[h!tb]
  \begin{center}
    \def\svgwidth{0.7\textwidth}
    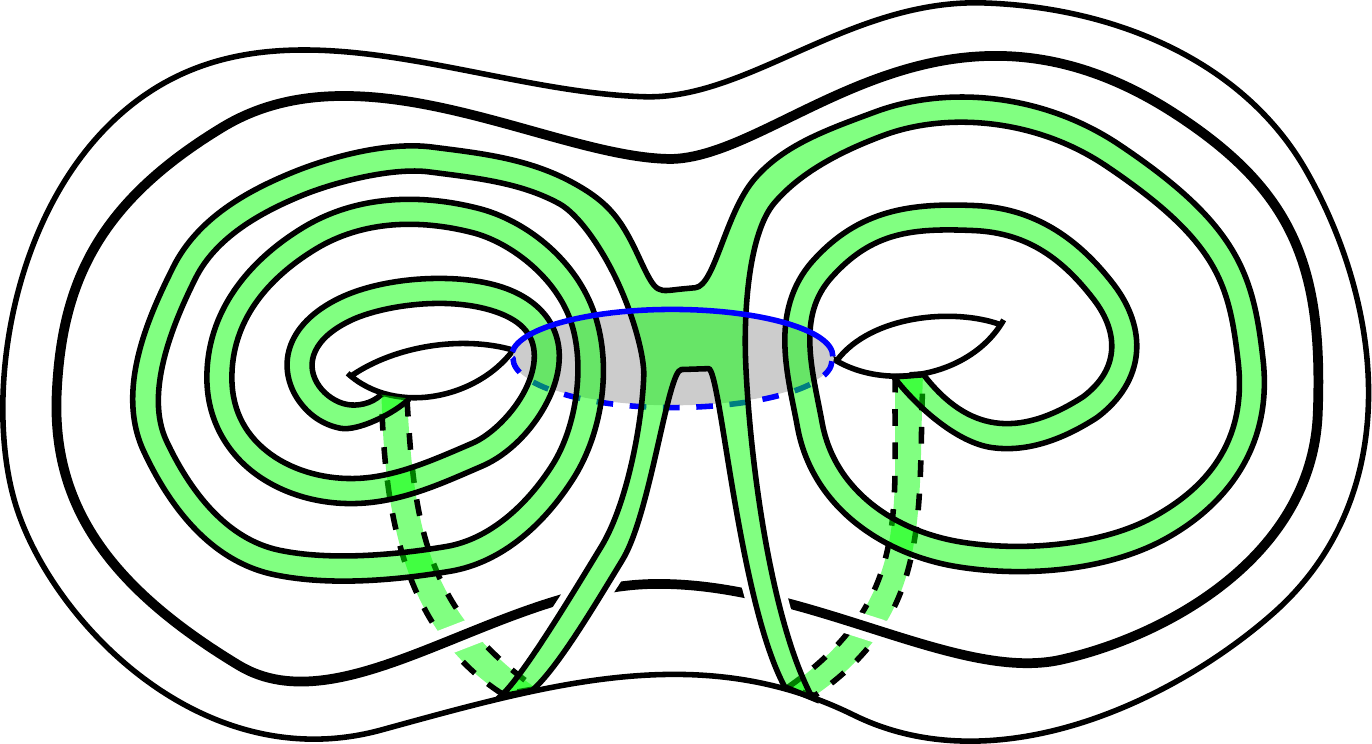
  \end{center}
  \caption{$H(\lambda)$, $K'$, $P$, and $E$}
  \label{fig:H-dual}
\end{figure}

After surgery on $K$, the core of the attached solid torus becomes a
new knot in $H(\lambda)$ which we will call the \defn{dual knot}.  Let
$K'\subseteq H(\lambda)$ be the dual knot to $K$.  In light
of~\autoref*{2-handle-dual-knot}, we get a picture of $H(\lambda)$ as
in~\autoref*{fig:H-dual}.  Notice in particular the disk $E$ and the 
intersections of its boundary with $P$.

\begin{corollary}\label{not-isotopic-to-boundary}
  The knot $K$ is not isotopic to the boundary of the solid torus
  $H[\boundary_3P]$. 
\end{corollary}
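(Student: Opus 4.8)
The plan is to contradict \autoref*{2-handle-dual-knot}. Write $V = H[\boundary_3P]$ for the solid torus obtained by attaching the $2$--handle to $H$ along $\boundary_3P$. Since this handle lies in $\boundary H$ it is disjoint from $K$, so surgery on $K$ commutes with the attachment and $H(\lambda)[\boundary_3P]$ equals $V(\lambda)$, the result of $\lambda$--surgery on $K$ viewed as a knot in $V$. By \autoref*{2-handle-dual-knot}, $V(\lambda)$ is a Seifert fibered space over the disk with two exceptional fibers of orders $p$ and $q$; the strategy is to show that no surgery on a knot lying on $\boundary V$ can produce such a manifold.

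So suppose $K$ is isotopic into $\boundary V$, and push it into a collar of $\boundary V$. By \autoref*{attaching-2-h}, $[K] = (q-p)[c]$ in $H_1(V)\cong\Z$, where $c$ is a core of $V$; since $q>p$ and an essential \scc on a torus represents a primitive class, $K$ is isotopic in $V$ to a $(q-p,b)$--torus knot on $\boundary V$ for some $b$ with $\gcd(q-p,b)=1$. A neighbourhood of $\boundary V$ is a copy of $T^2\times I$ Seifert fibered by copies of this curve, so after removing a fibered neighbourhood of $K$ from it and regluing $V$ along the $T^2\times\set{0}$ end — whose meridian $\mu$ is a non--fiber slope meeting the fiber in exactly $q-p$ points — we see that $\closure{V\setminus N(K)}$ is Seifert fibered over the annulus with at most one exceptional fiber, of order $q-p$ (and no exceptional fiber, hence $\closure{V\setminus N(K)}\cong T^2\times I$, when $q-p=1$).

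Now $V(\lambda)$ is obtained by Dehn filling the boundary torus $\boundary N(K)$ of this Seifert fibered space. If $q-p\ge 2$ and the filling slope is the fiber slope, then a vertical annulus joining $\boundary N(K)$ to itself caps off to an essential sphere, so $V(\lambda)$ is reducible. Otherwise the fibration extends over the filling solid torus, introducing at most one further exceptional fiber, so $V(\lambda)$ is Seifert fibered over the disk with its set of exceptional--fiber orders contained in $\set{q-p,k}$ for some $k\ge 1$; in particular $V(\lambda)$ is a solid torus whenever $q-p=1$. Thus in every case $V(\lambda)$ is reducible, a solid torus, or a Seifert fibered space over the disk having an exceptional fiber of order $q-p\ge 2$. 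But \autoref*{2-handle-dual-knot} says $V(\lambda)$ is irreducible, is not a solid torus (its fundamental group $\presentation{x,y}{x^p=y^q}$ is nonabelian since $p,q>1$), and has exceptional--fiber orders $\set{p,q}$; and $q-p\notin\set{p,q}$, since $q-p=p$ would force $p\mid q$ and $q-p=q$ forces $p=0$, both impossible for coprime integers $1<p<q$. Here one uses that a compact Seifert fibered space over the disk with two exceptional fibers of order $\ge 2$ determines this unordered pair of orders, which is unambiguous as $\set{p,q}\neq\set{2,2}$. This contradiction shows $K$ is not isotopic into $\boundary V$, proving the corollary.

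The step I expect to require the most care is the Seifert--fibered bookkeeping of the last two paragraphs: identifying the surviving exceptional fiber as having order exactly $q-p$, isolating the fiber--slope filling (the only one producing a reducible manifold), and invoking enough rigidity of Seifert fibrations to conclude that an identification $V(\lambda)\cong D^2(q-p,k)$ would force $q-p\in\set{p,q}$.
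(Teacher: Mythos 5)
Your proof is correct and follows essentially the same route as the paper's: use \autoref*{attaching-2-h} to see that $K$ would be a curve of winding number $q-p$ on $\boundary H[\boundary_3P]$, observe that surgery on such a curve yields either a reducible manifold, a solid torus, or a Seifert fibered space over the disk with an exceptional fiber of order $q-p$, and contradict \autoref*{2-handle-dual-knot} via the classification of Seifert fibered spaces since $q-p\notin\set{p,q}$. You simply spell out the Seifert-fibered bookkeeping (the fibration of the torus-knot exterior, the fiber-slope filling, and uniqueness of the fibration) that the paper cites implicitly.
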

\begin{proof}
  Let $N=H[\boundary_3P]$ and suppose that $K$ is isotopic to
  $\boundary N$.  Recall that $K$ meets a meridian disk for $N$
  algebraically in $q-p$ points by~\autoref*{attaching-2-h}.
  Therefore $N(\lambda)$ is either a Seifert fiber space over the disk
  with two exceptional fibers of order $q-p$ and $m$ for some nonzero
  integer $m$ or (if $\lambda$ is the surface slope of $K$ in
  $\boundary N$) a connect sum of a solid torus and lens space.  The
  previous result shows that $N(\lambda)$ is a Seifert fiber space
  over the disk with exceptional fibers of orders $p$ and $q$.  This
  space is irreducible, so it cannot be a connect sum.  Furthermore,
  the classification of Seifert fiber spaces (see for
  example~\cite{Jaco}) then shows that $q-p=p$ or $q-p=q$, both of
  which are impossible.
\end{proof}

The following lemma is well known and can be proven using an innermost
curve/outermost arc argument. 

\begin{lemma}\label{sfce-in-hbody}
  Let $S\subseteq M$ be a properly embedded incompressible surface in
  a handlebody of genus $g>0$.  Then $S$ is $\boundary$--compressible.
\end{lemma}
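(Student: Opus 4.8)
The plan is to argue by contradiction: suppose $S\subseteq M$ is incompressible but $\boundary$--incompressible, and derive a contradiction with the fact that $M$ is a handlebody. First I would observe that since $M$ is a handlebody of genus $g>0$, it has a complete meridian disk system $\Delta=D_1\cup\dots\cup D_g$ cutting $M$ into a $3$--ball $B$. I would isotope $S$ so as to minimize $|S\cap\Delta|$ among all isotopy representatives (and over all choices of $\Delta$ if convenient, though a fixed $\Delta$ suffices). Standard innermost-disk arguments remove closed curves of $S\cap\Delta$: a closed curve innermost on some $D_i$ bounds a subdisk $\delta\subseteq D_i$ with interior disjoint from $S$; since $S$ is incompressible, $\boundary\delta$ bounds a disk in $S$, and an irreducibility/isotopy argument lets us remove this intersection, contradicting minimality. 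So $S\cap\Delta$ consists of arcs (or $S\cap\Delta=\emptyset$).

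Next I would run the outermost-arc argument. If $S\cap\Delta\neq\emptyset$, pick an arc $\alpha$ of $S\cap D_i$ that is outermost in $D_i$, cutting off a subdisk $\delta\subseteq D_i$ whose interior is disjoint from $S$ and with $\boundary\delta=\alpha\cup\beta$, $\beta\subseteq\boundary D_i$. I must check $\alpha$ is essential in $S$: if $\alpha$ were $\boundary$--parallel in $S$ it would cobound a disk with a subarc of $\boundary S\subseteq\boundary M$, and combined with $\delta$ one could reduce $|S\cap\Delta|$, contradicting minimality (after a small isotopy across the resulting bigon, using incompressibility of $S$ to handle the case where the disks don't match up cleanly). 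Then $\delta$ is precisely a $\boundary$--compressing disk for $S$ in $M$: it meets $S$ in the essential arc $\alpha$ and meets $\boundary M$ in the arc $\beta$, with $\boundary\delta=\alpha\cup\beta$ and $\alpha\cap\beta=\boundary\alpha=\boundary\beta$. This is exactly the definition given in Section 2, so $S$ is $\boundary$--compressible, a contradiction.

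It remains to handle the case $S\cap\Delta=\emptyset$. Then $S$ lies in the $3$--ball $B$ obtained by cutting $M$ along $\Delta$, so $S$ is a properly embedded incompressible surface in a ball with $\boundary S\subseteq\boundary B$. An incompressible surface in a ball is a disk (any closed incompressible surface in $S^3$ is a sphere, hence compressible or boundary-parallel; and a surface with boundary in a ball, being incompressible, must be planar with a single boundary circle—otherwise one of its boundary circles bounds a compressing disk in the ball). Thus $S$ is a disk, and in a ball any properly embedded disk is isotopic into the boundary sphere; translating back, $S$ is isotopic into $\boundary M$, which is the first alternative in the definition of $\boundary$--compressible. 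So in every case $S$ is $\boundary$--compressible.

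The main obstacle is the bookkeeping in the outermost-arc step: verifying that the arc $\alpha$ is essential in $S$ and that the reduction genuinely decreases $|S\cap\Delta|$ requires carefully invoking incompressibility of $S$ and irreducibility of $M$ to surger across bigons, and one must be slightly careful that surgering $S$ along $\delta$ when $\alpha$ is inessential does not create new intersections elsewhere. Everything else is routine innermost/outermost technology, which is why the paper can legitimately call this "well known."
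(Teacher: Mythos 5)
Your argument is the standard innermost-circle/outermost-arc proof that the paper itself points to as the (omitted) justification of this well-known lemma, and it is correct in every case where the lemma is actually applied (the $3$--punctured sphere $P$ and annuli). The only wrinkle is the degenerate case where $S$ is an essential disk disjoint from $\Delta$ (or a $2$--sphere): such a surface is vacuously incompressible yet not isotopic into $\boundary M$ and has no essential arcs, so it is not $\boundary$--compressible under the paper's definition — but this is an edge case in the statement of the lemma itself rather than a defect introduced by your proof.
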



The next proposition plays an important role in our argument that
many of the knots constructed above are not $1$--bridge.  In this
section we will use it to consider the effect of surgery on $K$ at
slopes other than $\mu$ and $\lambda$.

\begin{prop}\label{three-punctured-spheres}
  Let $P\subseteq M$ be a properly embedded, separating,
  incompressible 3--punctured sphere in a handlebody of genus $g>1$,
  so that $\boundary$--compressing $P$ in $M$ yields a separating
  surface $\mathcal{A}$ consisting of one or two annuli.  Then the
  core(s) of $\mathcal{A}$ are primitive on at least one side of $P$.
\end{prop}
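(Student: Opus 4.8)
The plan is to cut the handlebody $M$ along the 3--punctured sphere $P$ and analyze the pieces, using the fact that $\boundary$--compressing $P$ produces the annulus (or annuli) $\mathcal{A}$, whose complement in $\boundary M$ records a splitting of $M$. Write $M = M_1 \cup_P M_2$ for the two pieces cut along $P$. I would first argue that each $M_i$ is itself a handlebody (or at least a compression body with the right structure): since $M$ is a handlebody and $P$ is incompressible, separating, and 3--punctured, each $M_i$ has incompressible boundary component $P$ sitting inside a genus $g$ handlebody, and a standard Haken--type argument shows that a handlebody cut along an incompressible separating surface gives pieces whose frontier is incompressible and which, being submanifolds of a handlebody, are themselves handlebodies. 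The genus count: $\chi(P) = -1$, so $\chi(M_1) + \chi(M_2) = \chi(M) - \chi(P) \cdot 0$... more carefully, $\operatorname{genus}(M_1) + \operatorname{genus}(M_2)$ is constrained, and since $P$ has three boundary circles each $M_i$ has genus at least one.

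Next I would use the $\boundary$--compression hypothesis concretely. A $\boundary$--compressing disk $D$ for $P$ in $M$ lies in one of the two sides, say $M_1$; performing the $\boundary$--compression replaces $P$ by $\mathcal{A}$, one or two annuli, and simultaneously the side $M_1$ is cut by $D$ into pieces. Because $\mathcal{A}$ is separating and consists of annuli, and because attaching $D$ was the only change, I expect that on the $M_1$ side the disk $D$ exhibits $P$ as obtained from $\mathcal{A}$ by a single band; dually, $M_1$ deformation retracts across $D$ onto the manifold obtained by compressing, and the core(s) of $\mathcal{A}$ become cores of annuli in $\boundary M_1$ that are explicitly visible. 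The key local picture: in the solid-torus or handlebody piece $M_1$, after the $\boundary$--compression $\mathcal{A}$ bounds a product region on one side (a core of $\mathcal{A}$ cobounds an annulus with a curve on $\boundary M_1 \setminus \mathcal{A}$), and I want to convert "product region for the annulus" into "the core of $\mathcal{A}$ is primitive," i.e.\ there is a meridian disk of $M_1$ meeting the core exactly once. This is exactly where I would invoke the definition of primitive from Section 2 (meridian disk meeting the curve once $\iff$ attaching a 2--handle gives a handlebody) and check that the meridian disk $D$ of the $\boundary$--compression, or a disk adjacent to it, does the job.

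The main obstacle, and where I would spend the most care, is the case analysis between "$\mathcal{A}$ is one annulus" versus "$\mathcal{A}$ is two annuli," together with keeping track of which side of $P$ the $\boundary$--compressing disk lies on and whether the conclusion holds on that side or the other. When $\mathcal{A}$ is one annulus, $\boundary P$ has a pair of components getting banded together and a third left alone, so one core curve is in play and I must show it is primitive in (at least) the side containing $D$; when $\mathcal{A}$ is two annuli, the band splits $P$ into two annuli and there are two core curves, and I expect both to be primitive on the side containing the compressing disk because that side, cut along $D$, becomes a union of product pieces. I would handle this by pushing everything into the picture of $M_i$ as a handlebody with an explicit meridian disk system (as in Section 4, where $H \cong P \times I$ and $J$ is a tangle exterior), identifying $D$ with a product disk $\lambda \times I$ and reading off primitivity directly. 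The subtlety of "at least one side" in the statement reflects that on the non--$D$ side the core need not be primitive (indeed for $J$ the curve $\gamma$ is not), so the proof should \emph{not} try to prove primitivity on both sides; it suffices to pin it to the side where the $\boundary$--compression takes place, and that is what the argument above delivers.
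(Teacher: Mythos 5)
Your setup --- cut $M$ along $P$ into two handlebodies $M_1$ and $M_2$, locate the $\boundary$--compressing disk $D$ on one side, and observe that cutting that side along $D$ leaves a solid torus or pair of solid tori to which a $1$--handle is reattached --- matches the opening of the paper's proof. The gap is in your last step: you assert that the core(s) of $\mathcal{A}$ are primitive \emph{on the side containing $D$}, and that the proof should ``pin it to the side where the $\boundary$--compression takes place.'' That claim is false, and the paper itself essentially supplies a counterexample. In $M(\lambda)=J\cup_P H(\lambda)$ the $\boundary$--compression of $P$ happens in $H(\lambda)$ via the separating disk of~\autoref*{k-has-handlebody-surgery}; by~\autoref*{2-handle-dual-knot} that disk cuts $H(\lambda)$ into solid tori $T_1,T_2$ around which the resulting cores $\boundary_1P$ and $\boundary_2P$ wind $p$ and $q$ times, so for $p,q>1$ they are \emph{not} primitive on the side containing $D$ --- they are primitive only on the $J$ side, where $\alpha$ and $\beta$ were built to be primitive. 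The unjustified step in your sketch is the assertion that ``$\mathcal{A}$ bounds a product region'' in the piece $N_2$ cut off by $D$: $\mathcal{A}$ is just a subannulus of $\boundary N_2$, and its core may wind any number of times around the solid torus $N_2$, so no meridian disk meeting it once is visible there. (This is also why the paper, in~\autoref*{core-not-prim} and Section 9, must rule out primitivity in $J$ as well as in $H'$; if your version of the proposition were correct, the $H'$ half alone would finish the argument.)

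The missing idea is a second application of~\autoref*{sfce-in-hbody}, this time to $\mathcal{A}$ itself: each component $A$ of $\mathcal{A}$ is a properly embedded incompressible annulus in the handlebody $M$, hence admits its own $\boundary$--compressing disk $D'$, which after an innermost-curve/outermost-arc argument can be taken to lie entirely in $N_1\cong M_1$ or in $N_2$. Such a disk meets $A$ in a spanning arc, hence meets the core of $A$ once and exhibits it as primitive in whichever piece contains $D'$; in the $N_2$ case one pushes the attaching disks of the $1$--handle recovering $M_2$ off $D'$ to get primitivity in $M_2$. The side on which $D'$ lands cannot be predicted in advance, and that --- not the location of the original disk $D$ --- is the true source of the ``at least one side'' in the statement.
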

\begin{proof}
  Suppose that $P$ separates $M$ into $M_1$ and $M_2$.  These are
  easily seen to be handlebodies themselves.  Suppose further that $D$
  is a $\boundary$--compressing disk for $P$ which is properly
  embedded in $M_2$. Let $\mathcal{A}$ be the annulus or annuli
  resulting from $\boundary$--compressing.  Then $\mathcal{A}$
  separates $M$ into $N_1$ and $N_2$ where $N_1\subseteq M_1$ is
  homeomorphic to $M_1$ and $N_2$ is either a solid torus or a pair of
  solid tori.  In either case, $M_2$ is obtained from $N_2$ by
  attaching a $1$--handle.

  Let $A$ be a component of $\mathcal{A}$.  Clearly $A$ is a properly
  embedded, incompressible annulus in $M$, and therefore it
  $\boundary$--compresses by~\autoref*{sfce-in-hbody}.  Let $D'$ be a
  $\boundary$--compressing disk for $A$ in $M$.  Using an innermost
  curve/outermost arc argument, we may assume that $D'$ lies entirely
  in $N_1$ or $N_2$.  If $D'\subseteq N_1$, then $D'$ shows that the
  core of $A$ is primitive in $M_1$ since $M_1\cong N_1$.  If
  $D'\subseteq N_2$, then the core of $A$ is primitive in $M_2$ since
  we may choose the attaching disk(s) of the $1$--handle to be
  disjoint from a disk showing that the core of $A$ is primitive.
\end{proof}

\begin{lemma}\label{primitive-curves-product}
  Let $P$ be a $3$--punctured sphere embedded in the boundary of a
  genus 2 handlebody $H$ so that no two components of $\boundary P$
  are homologous in $H$ and at least two components of $\boundary P$ are
  primitive in $H$.  If $P$ is $\boundary$--compressible, then either
  \begin{itemize}
    \item $H\cong P\times I$, or
    \item there is one component $z$ of $\boundary P$ which is not
      primitive, and every $\boundary$--compressing disk for $P$ is
      disjoint from $z$.
  \end{itemize}
\end{lemma}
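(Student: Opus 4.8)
The plan is to analyze a $\boundary$--compressing disk $D$ for $P$ and use \autoref*{three-punctured-spheres} together with the primitivity hypotheses to pin down the structure of $H$. Let $\boundary P = x_1 \cup x_2 \cup x_3$, with $x_1, x_2$ primitive. First I would $\boundary$--compress $P$ along $D$; since $P$ is separating and $D$ is a $\boundary$--compressing disk, the result is a surface $\mathcal{A}$ which is either a single annulus (if $\boundary D$ has both endpoints on the same component of $\boundary P$) or a pair of annuli (if $\boundary D$ runs between two distinct components). Actually, in a handlebody of genus $2$, $P$ separates $H$ from the outside into a genus $2$ piece $H$ itself; but here $P\subseteq\boundary H$, so I should instead think of $P$ as sitting in $\boundary H$ and a $\boundary$--compressing disk as an essential disk $D$ in $H$ meeting $P$ in a single essential arc. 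The surgered curve system $\boundary P$ changes by a band move along $D$.

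The key dichotomy is on which components of $\boundary P$ the arc $\boundary D \cap P$ has its endpoints. If $\boundary D\cap P$ has an endpoint on $x_3$ and an endpoint on $x_1$ (or $x_2$), then the band move on $\boundary P$ yields two components, one isotopic to $x_2$ (or $x_1$) and one isotopic to $x_1 x_3$ (or a corresponding product); a Euler-characteristic count shows the $\boundary$--compressed surface is two annuli. I would then apply \autoref*{three-punctured-spheres} (or rather the argument in \autoref*{primitive-curves-product}'s own generality): cutting $H$ along $D$ produces a genus $2$ handlebody with an annulus decomposition, and I want to conclude that $H\cong P\times I$. The cleanest route: cutting $H$ along $D$ yields a handlebody $H'$; since $\boundary D\cap P$ is essential, $P$ becomes an annulus $A$ in $\boundary H'$ (or $\boundary H'$ acquires $\mathcal{A}$), and primitivity of $x_1$ and $x_2$ forces meridian disks of $H$ meeting these curves once. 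If $D$ is disjoint from $x_3$, i.e.\ $\boundary D\cap P$ runs between $x_1$ and $x_2$, then the band move merges $x_1$ and $x_2$, and what remains is an annulus whose core is isotopic to $x_1 x_2$, together with $x_3$; here I want to show $H\cong P\times I$ by exhibiting the product structure directly from the primitive meridian disks for $x_1, x_2$ and the fact that $D$ connects them.

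The remaining case is when $\boundary D\cap P$ has both endpoints on $x_1$ (or both on $x_2$, or both on $x_3$). If both endpoints are on the primitive curve $x_1$, then the essential arc together with primitivity forces a product structure: a meridian disk meeting $x_1$ once, cut along $D$, should recover $P\times I$. If both endpoints of $\boundary D\cap P$ lie on $x_3$, the non-primitive candidate, then $D$ is disjoint from $x_1$ and $x_2$ and I am in the second bullet of the conclusion — but I must still rule out that $x_3$ is primitive in that situation, or rather show that \emph{whenever} $H\not\cong P\times I$ it is precisely $x_3$ that fails to be primitive and \emph{every} $\boundary$--compressing disk avoids it. For the latter, suppose some $\boundary$--compressing disk $D_0$ did meet $x_3$; combining $D_0$ with the hypothesized product-or-not structure, together with the uniqueness-type arguments for $\boundary$--compressing disks established earlier in the excerpt, should yield a contradiction with $H\not\cong P\times I$. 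The main obstacle I anticipate is the bookkeeping in the case analysis on endpoints of $\boundary D\cap P$ — in particular showing that the "two meridian disks meeting $x_1, x_2$ once each, plus a $\boundary$--compressing disk" configuration actually forces $H\cong P\times I$ rather than some other genus $2$ handlebody; this requires carefully tracking how cutting along these three disks decomposes $H$ into a ball and checking the gluing pattern reassembles as a product $P\times I$.
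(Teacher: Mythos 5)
Your overall shape --- a case analysis on where the endpoints of the arc $\boundary D\cap P$ lie, then cutting $H$ along $D$ and using primitivity to recognize product structures on the pieces --- matches the paper's, but you have distributed the two conclusions over the cases essentially backwards, and that is a genuine gap. First, two local errors: cutting a genus $2$ handlebody along an essential disk yields one or two solid tori, not a genus $2$ handlebody; and cutting $P$ along an essential arc joining two \emph{distinct} boundary components yields a single annulus (the result is connected), whose core is isotopic to the third boundary component --- it is the arc with both endpoints on one component that yields two annuli. Now the substantive problem: when $D$ is disjoint from $x_3$ (arc from $x_1$ to $x_2$, or both endpoints on $x_1$ or on $x_2$), the solid torus produced by the cut whose boundary carries the annulus containing $x_3$ is a product over that annulus if and only if its core --- isotopic to $x_3$ --- is a longitude, i.e.\ if and only if $x_3$ is primitive. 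Primitivity of $x_1$ and $x_2$ alone cannot force $H\cong P\times I$ here; this is exactly the configuration of the second bullet, which genuinely occurs with $H\not\cong P\times I$ (for instance $H(\alpha)$ in the proof of \autoref*{two-handlebody-fillings}). Conversely, the case you defer with ``should yield a contradiction'' --- a disk $D_0$ meeting $x_3$ --- is where the argument actually closes: if $D_0$ meets $x_3$ once it is a meridian disk exhibiting $x_3$ as primitive; if it meets $x_3$ twice, its arc separates $P$, hence $D_0$ separates $H$ (this is where the hypothesis that no two components of $\boundary P$ are homologous enters, and you never use it) into two solid tori, each containing one of $x_1,x_2$ disjoint from $D_0$ and therefore still primitive there; each solid torus is then a product over its annulus, and undoing the compression reassembles $H$ as $P\times I$, making $x_3$ primitive after all. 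So either every disk avoids $x_3$, or $x_3$ is primitive; and in the latter situation one still has to run the separating and nonseparating cases (the nonseparating one works precisely because the annulus core is the third, now primitive, component) to reach the first bullet.

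A smaller point: \autoref*{three-punctured-spheres} is not the right tool here. It concerns a \emph{properly embedded} $3$--punctured sphere, whereas in this lemma $P$ lies in $\boundary H$, and the paper's proof does not invoke it; the entire argument is elementary cut-and-paste along $D$ plus the observation that a primitive curve disjoint from the cutting disk stays primitive in its solid torus.
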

\begin{proof}
  First note that if $D$ is nonseparating in $H$ but $\boundary D$
  separates $P$, two components of $\boundary P$ are homologous in the
  solid torus resulting from $\boundary$--compressing.  These
  components must have been homologous in $H$, and therefore $D$ meets
  two different components of $\boundary P$.

  Suppose that there is a component $z$ of $\boundary P$ which is not
  primitive in $H$ and let $D$ be $\boundary$--compressing disk for
  $P$ in $H$.  In this case $D$ must be disjoint from $z$ since
  otherwise it would mark $z$ as primitive.

  Therefore suppose that $D$ is separating in $H$ and meets $z$.
  Cutting $H$ along $D$ we obtain two solid tori each with an annulus
  embedded in its boundary.  The primitive components $x$ and $y$ of
  $\boundary P$ are disjoint from $D$, and so an innermost
  curve/outermost arc argument shows that they remain primitive in
  their respective solid tori.  Therefore these solid tori are
  products of the annuli in their boundaries.  Reversing the
  $\boundary$--compression preserves the product structure, so $H\cong
  P\times I$.

  The same argument shows that if every component of $\boundary P$ is
  primitive and $D$ is a separating $\boundary$--compressing disk for
  $P$ in $H$, then $H\cong P\times I$.

  If $D$ is nonseparating, a similar argument shows that we obtain a
  single solid torus with an annulus in its boundary whose core is
  primitive.  Again, we see that $H\cong P\times I$.
\end{proof}

\begin{prop}\label{two-handlebody-fillings}
  If surgery on $K$ in $M$ along the slope $\alpha$  yields a
  handlebody, then $\alpha=\mu$ or $\alpha=\lambda$.
\end{prop}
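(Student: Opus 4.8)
The plan is to suppose that surgery on $K$ in $M$ along a slope $\alpha \notin \{\mu, \lambda\}$ yields a handlebody and derive a contradiction from the structure of $P$ after surgery. Let $M(\alpha)$ denote the result of this surgery, and let $H(\alpha)$ be the result of the surgery performed inside $H$, so $M(\alpha) = J \cup_P H(\alpha)$. Since $K$ is primitive in $\partial H$ (so $H(\alpha)$ is always a handlebody) and $J$ contains $P$ incompressibly and $\partial$-incompressibly by \autoref*{P-in-J}, the first step is to argue that $P$ remains incompressible in $M(\alpha)$: a compressing disk for $P$ in $M(\alpha)$ would, after an innermost-curve argument against the incompressibility of $P$ in $J$, have to compress $P$ inside $H(\alpha)$; but by \autoref*{E-is-unique} and \autoref*{minimum-of-E-cap-P}, the curve $K$ obstructs any compression of $P$ in $H$ that misses $K$, and the surgery dual argument (as in \autoref*{boundary-compressing-disks-meet-K} and \autoref*{min-separating-disks}) should rule out compressions in $H(\alpha)$ for $\alpha \neq \mu$. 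Granting incompressibility, \autoref*{sfce-in-hbody} forces $P$ to $\partial$-compress in $M(\alpha)$, and by \autoref*{P-in-J} again this $\partial$-compression must happen on the $H(\alpha)$ side.

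Next I would apply \autoref*{three-punctured-spheres} to $P \subseteq M(\alpha)$: $\partial$-compressing $P$ yields one or two annuli $\mathcal{A}$, and the core(s) of $\mathcal{A}$ are primitive on at least one side of $P$. Since the $\partial$-compression occurs in $H(\alpha)$, the relevant side is $H(\alpha)$, and \autoref*{primitive-curves-product} then applies to $P \subseteq \partial H(\alpha)$ (using \autoref*{attaching-2-h}, no two components of $\partial P$ are homologous, and $\boundary_1 P$, $\boundary_2 P$ are primitive since $H(\alpha)$ is a surgery on the primitive-in-$\partial H$ knot $K$ — though this needs care). The dichotomy of \autoref*{primitive-curves-product} says either $H(\alpha) \cong P \times I$, or exactly one boundary component $z$ of $P$ is non-primitive and every $\partial$-compressing disk misses $z$. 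The first alternative says $K$ was isotopic to $\partial H$ after surgery at slope $\alpha$ in a product way; I would compare this with the surface-slope characterization from \autoref*{k-has-handlebody-surgery} to conclude $\alpha = \lambda$, and separately the case where $H(\alpha) \cong H$ genuinely (no change) gives $\alpha = \mu$.

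For the second alternative of \autoref*{primitive-curves-product}, the component $z$ must be one of $\boundary_1 P, \boundary_2 P, \boundary_3 P$, and I would use \autoref*{2-handle-dual-knot} together with \autoref*{not-isotopic-to-boundary}: attaching a $2$-handle to $H(\alpha)$ along $z$ and examining the resulting manifold (a surgery description involving the dual knot $K'$ of \autoref*{fig:H-dual}) should force $z$ to be primitive after all, or produce a Seifert-fibered space whose invariants are incompatible unless $\alpha \in \{\mu, \lambda\}$. The main obstacle, which I expect to be the technical heart of the argument, is controlling how the three boundary slopes $p$, $q$, $q-p$ of $K$ against meridian disks (from \autoref*{attaching-2-h}) interact with a general surgery slope $\alpha = a\mu + b\lambda$: one must show that for $(a,b)$ not proportional to $(1,0)$ or to the $\lambda$-direction, the core of $\mathcal{A}$ fails to be primitive on \emph{both} sides, contradicting \autoref*{three-punctured-spheres}. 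This likely requires a careful computation of $\pi_1$ of $H(\alpha)[z]$ in the basis dual to $\{D_1, D_2\}$, generalizing the presentation $\langle x, y : x^p = y^q \rangle$ obtained in \autoref*{2-handle-dual-knot} to an arbitrary $\alpha$, and checking that abelianization or the Seifert invariants obstruct primitivity except at the two known slopes.
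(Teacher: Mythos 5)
Your overall skeleton --- force a $\boundary$--compression of $P$ on the $H(\alpha)$ side and then play \autoref*{three-punctured-spheres} against \autoref*{primitive-curves-product} --- matches the paper's, but the two steps that make that skeleton close are missing, and your proposed substitutes do not fill them. First, the paper opens by combining \autoref*{K-atoroidal} with a theorem of Wu~\cite{Wu92} to conclude that any handlebody-producing slope satisfies $\Delta(\alpha,\mu)=\Delta(\alpha,\lambda)=1$. The condition $\Delta(\alpha,\lambda)=1$ is precisely what shows that $\boundary_1P$ and $\boundary_2P$ stay primitive in $H(\alpha)$: isotoping $K$ into the punctured torus $(T^2\times\{i\})\cap H$, where its surface slope is $\lambda$, a distance-one surgery acts on $\boundary H$ by a Dehn twist along $K$, and this twist fixes whichever component of $\boundary P$ is disjoint from that torus. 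Your parenthetical ``$\boundary_1 P$, $\boundary_2 P$ are primitive since $H(\alpha)$ is a surgery on the primitive-in-$\boundary H$ knot $K$ --- though this needs care'' is exactly the gap: primitivity of $K$ only guarantees that $H(\alpha)$ is a handlebody, not that any particular boundary curve remains primitive, and without the distance-one constraint the Dehn-twist argument is unavailable.

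Second, the product case. The paper excludes $H(\alpha)\cong P\times I$ by a result of Ni: that isomorphism would force $K$ to meet every nonseparating $\boundary$--compressing disk for $P$ in $H$ at most twice, which fails because those disks meet $K$ at least $p$, $q$, or $q-p$ times by \autoref*{attaching-2-h}. Your plan to ``compare with the surface-slope characterization'' of \autoref*{k-has-handlebody-surgery} is not an argument. Once these two inputs are in place, the endgame is far shorter than the $\pi_1$ or Seifert-invariant computation you anticipate: \autoref*{primitive-curves-product} forces $\boundary_3P$ to be the unique non-primitive component of $\boundary P$ in $H(\alpha)$ and forces every $\boundary$--compressing disk for $P$ in $H(\alpha)$ to miss $\boundary_3P$; the resulting annulus therefore has core parallel to $\boundary_3P$, which is primitive neither in $J$ (by construction, it is the non-primitive curve $\gamma$) nor in $H(\alpha)$, contradicting \autoref*{three-punctured-spheres}. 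The ``technical heart'' you defer to an unperformed computation is in fact carried by the Wu and Ni citations, so as written the proposal does not constitute a proof.
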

\begin{proof}
  Let $\alpha\neq\mu,\lambda$ be a slope on $\boundary N(K)$ such that
  surgery on $K$ in $M$ along $\alpha$ yields a handlebody.
  By~\autoref*{K-atoroidal} and a result of Wu~\cite{Wu92},
  $\Delta(\alpha,\mu)=\Delta(\alpha,\lambda)=1$.  Recall that
  $\boundary H$ contains two punctured tori
  $T_1=(T^2\times\set{0})\cap H$ and $T_2=(T^2\times\set{1})\cap
  H$. Isotop $K$ to lie in $T_1$; the surface slope of $K$ with
  respect to $\boundary H$ is $\lambda$.  Note that $K$ is disjoint
  from $\boundary_2P$.  The effect of $\alpha$ surgery on $\boundary
  H$ can be seen by Dehn twisting along $K$.  Since $K$ is disjoint
  from $\boundary_2P$, this curve remains primitive in $H(\alpha)$.
  Similarly, we may isotop $K$ to lie in $T_2$ to see that
  $\boundary_1P$ remains primitive in $H(\alpha)$.

  By a result of Ni~\cite{Ni}, if $H(\alpha)\cong P\times I$ then $K$
  meets every nonseparating $\boundary$--compressing disk for $P$ in
  $H$ at most twice.  Since this is clearly not
  true,~\autoref*{primitive-curves-product} implies that
  $\boundary_3P$ is not primitive in $H(\alpha)$ and that every
  $\boundary$--compressing disk for $P$ in $H(\alpha)$ is disjoint
  from $\boundary_3P$. 

  By~\autoref*{three-punctured-spheres}, $P$ must $\boundary$--compress in
  $M(\alpha)$, and the core(s) of the resulting annulus or annuli must
  be primitive on at least one side of $P$.  Since $\boundary_3P$ is
  not primitive in either $J$ or $H(\alpha)$, a
  $\boundary$--compressing disk $D\subseteq H(\alpha)$ for $P$ must
  meet $\boundary_3P$.  This
  contradicts~\autoref*{primitive-curves-product} and shows that
  $M(\alpha)$ is not a handlebody.
\end{proof}

\section{Infinitely many distinct examples}\label{section:distinct}
Here we show that infinitely many of the knots described above are
inequivalent in the sense that there is no homeomorphism of the
handlebody which carries the first knot to the second.  We need a
technical lemma which says roughly that the isotopy class of the
multicurve $\boundary P$ is determined by the construction
in~\autoref*{section:knots}:

\begin{lemma}\label{boundary-P-unique}
  Fix a hyperbolic $2$--bridge knot $L$ and integers $1<p<q$ with
  $\gcd(p,q)=1$.  Suppose that $(M, K)$ and $(M', K')$ arise from the
  construction in~\autoref*{section:knots}, and let $P$ and $P'$ be
  the corresponding $3$--punctured spheres described in that section.
  If $(M', K')$ is homeomorphic to $(M, K)$ by a homeomorphism $h$,
  then $h(\boundary P')$ is isotopic to $\boundary P$.
\end{lemma}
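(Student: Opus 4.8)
The plan is to show that the isotopy class of $\boundary P$ in $M$ is canonically characterized by topological properties that any homeomorphism must preserve, so that $h(\boundary P')$ and $\boundary P$ must be isotopic. The natural candidate for such a characterization is a statement of the form: \emph{$\boundary P$ is the unique (up to isotopy) multicurve in $\boundary M$ that bounds an essential, separating, incompressible, $\boundary$--incompressible $3$--punctured sphere $P$ in $\closure{M\setminus N(K)}$ with the additional property that one side of $P$ is homeomorphic to a product $P\times I$ containing $K$ as a primitive curve}. Once such a characterization is in place, the lemma is immediate: $h$ carries $P'$ to a surface with all of these properties, and so $h(\boundary P')$ bounds a surface isotopic to $P$, which forces $h(\boundary P')$ to be isotopic to $\boundary P$.

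First I would set up the topological invariants of $P\subseteq \closure{M\setminus N(K)}$. By~\autoref*{K-atoroidal} the knot exterior $\closure{M\setminus N(K)}$ is irreducible, atoroidal, and anannular, and the surface $P$ is incompressible and $\boundary$--incompressible there (incompressibility comes from $H\cong P\times I$ together with~\autoref*{P-in-J}, and $\boundary$--incompressibility follows since a $\boundary$--compression would have to occur on the $J$ side by~\autoref*{P-in-J}, contradicting~\autoref*{easy-case} or~\autoref*{hard-case} — though to keep this lemma independent of the later sections it is cleaner to argue $\boundary$--incompressibility directly from~\autoref*{boundary-compressing-disks-meet-K} and~\autoref*{P-in-J}). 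I would then invoke the theory of characteristic/canonical surfaces: in an irreducible, atoroidal, anannular compact $3$--manifold with boundary, there are only finitely many isotopy classes of incompressible, $\boundary$--incompressible surfaces of bounded Euler characteristic, but finiteness alone is not enough. The key is to pin down $P$ \emph{uniquely}. The right tool is to use the hyperbolic structure on $\closure{M\setminus N(K)}$: since $L$ is hyperbolic, I expect $\closure{M\setminus N(K)}$ to be hyperbolic (this should follow from Thurston's hyperbolization given the atoroidal/anannular/irreducible conclusions of~\autoref*{K-atoroidal} plus the absence of essential discs, which holds since $M$ minus a thickened primitive knot is not a handlebody — one needs to rule out essential discs, but $P$ being incompressible on both sides handles this). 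Then any homeomorphism $h\co (M,K)\to(M',K')$ restricts to a homeomorphism of hyperbolic knot exteriors, hence (Mostow rigidity) is isotopic to an isometry.

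The crucial step, and the main obstacle, is upgrading ``isometry'' to ``$h(\boundary P')$ isotopic to $\boundary P$.'' For this I would identify $P$ as a canonical piece of the manifold: among all incompressible, $\boundary$--incompressible $3$--punctured spheres in $\closure{M\setminus N(K)}$, the surface $P$ is distinguished by the two properties that (a) it is separating, with one complementary piece a product $P\times I$ whose $P\times\{1\}$ side contains $N(K)$ with $K$ primitive in that side's doubled-up handlebody $H$, and (b) the other complementary piece is the exterior of a $2$--bridge knot $L$ with a specified tangle decomposition. Concretely, I would argue: the piece $H=\closure{M\setminus N(K)}\cap (\text{product side})$ is characterized as the unique component of the complement of a maximal-complexity-bounded collection of such spheres that is a genus-$2$ handlebody-with-$K$; and the collection of boundary slopes $\boundary_1 P,\boundary_2 P,\boundary_3 P$ is determined by~\autoref*{attaching-2-h} (they are the slopes on $\boundary H$ whose $2$--handle fillings give knots in solid tori of winding numbers $p$, $q$, $q-p$). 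Since $h$ preserves winding numbers and the handlebody structure, $h(\boundary_i P')$ must realize the same three slopes, hence $h(\boundary P')$ is isotopic to $\boundary P$ (using that on a genus-$2$ handlebody boundary an unordered triple of pairwise-nonhomologous primitive-or-winding-$k$ curves bounding a $3$--punctured sphere is determined by those slopes, which is exactly the rigidity provided by the involutions $\tau$ and $\sigma$ already exhibited in~\autoref*{section:knots}). I would expect the write-up to proceed by: (1) establishing hyperbolicity of the exterior; (2) applying Mostow to reduce $h$ to an isometry, so $h(P')$ is isotopic to a surface of the same topological type as $P$; (3) showing $P$ is the unique such surface by the winding-number argument via~\autoref*{attaching-2-h} combined with~\autoref*{E-is-unique} (which rigidifies the nonseparating disc $E$, hence the product structure); (4) concluding $h(\boundary P')\simeq\boundary P$. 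The delicate point is step (3): ruling out other incompressible $3$--punctured spheres, which I would handle by a cut-and-paste/innermost-disc argument against $P$ and the canonical disc $E$, exactly parallel to the proofs of~\autoref*{no-annuli-on-P} and~\autoref*{min-separating-disks}.
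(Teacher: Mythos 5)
Your proposal has a genuine gap: the entire content of the lemma is concentrated in your step (3), the claim that $P$ is the \emph{unique} incompressible, $\boundary$--incompressible $3$--punctured sphere with the stated properties, and you never actually prove it --- you only assert that it should follow from a cut-and-paste argument ``parallel to'' \autoref*{no-annuli-on-P} and \autoref*{min-separating-disks}. That cut-and-paste argument \emph{is} the proof of the lemma; the paper carries it out directly by isotoping $P'$ to minimize $|P\cap P'|$ and then running a five-case analysis on the intersection pattern (trivial circles and arcs, essential circles, a single separating arc, parallel arcs, and the no-parallel-arcs configuration), where the decisive cases are killed by Wu's lemmas on $(r,s)$ disks in the rational tangle exterior $J$ --- this is where the hypothesis that $L$ is \emph{hyperbolic} (so $|\inv{m}|>1$) enters. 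Your outline never engages with the hard cases, in particular with ruling out essential circles of intersection and with the $(0,4)$ and $(4,2)$ disk configurations in $J$, so the ``delicate point'' you flag is in fact the whole lemma.

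The surrounding machinery you propose also does not help and is partly unsound. Mostow rigidity, even granting hyperbolicity of $\closure{M\setminus N(K)}$, only tells you $h$ is isotopic to an isometry; it says nothing about where $h(P')$ sits unless $P'$ is already known to be canonical, which is again the deferred uniqueness claim. Moreover, hyperbolicity of the exterior is never established in the paper and is doubtful: $\boundary M$ has genus $2$ and the paper only proves the exterior is irreducible, atoroidal, and anannular, not that $\boundary M$ is incompressible in $\closure{M\setminus N(K)}$; a compressible higher-genus boundary component rules out the kind of hyperbolic structure (and rigidity statement) you want to invoke. Finally, the winding-number argument via \autoref*{attaching-2-h} cannot pin down isotopy classes: two non-isotopic simple closed curves on the boundary of a genus $2$ handlebody can yield homeomorphic manifolds after attaching $2$--handles, so ``realizing the same three filling invariants'' does not force $h(\boundary P')$ to be isotopic to $\boundary P$. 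You should replace steps (1)--(3) with the direct intersection-minimization argument against $P$ itself, using \autoref*{P-in-J}, \autoref*{boundary-compressing-disks-meet-K}, \autoref*{no-annuli-on-P}, \autoref*{min-separating-disks}, and the $(r,s)$--disk lemmas of Wu.
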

\begin{proof}
  We will work in $M$ and identify $P'$ with its image under $h$.
  Note that the roles of $P$ and $P'$ are symmetric in the proof.
  
  Isotop $P'$ so that $|P\cap P'|$ is minimal.  The curves $\boundary
  P$ cut $\boundary M$ into two $3$--punctured spheres, so if $P\cap
  P'=\emptyset$, then $\boundary P'$ is isotopic to $\boundary P$.

  If $P\cap P'\neq\emptyset$, there are several cases:
  \begin{enumerate}
  \item Suppose that $P\cap P'$ contains a \scc which is trivial in
    $P'$.  Then it must be trivial in $P$ because both surfaces are
    incompressible, and so we can isotop to reduce $|P\cap P'|$.

    If $P\cap P'$ contains an arc which is trivial in $P'$, then an
    outermost such arc gives either a $\boundary$--compressing disk
    for $P$ in the complement of $K$ (impossible by~\autoref*{P-in-J}
    and~\autoref*{boundary-compressing-disks-meet-K}) or a
    disk which guides an isotopy of $P'$ reducing $|P\cap P'|$.

  \item If there is a \scc of intersection of $P\cap P'$ which is
    essential in $P'$ then it must also be essential in $P$.  In this
    case there is an incompressible annulus $A\subseteq N$, $N=J$ or
    $H$, with one boundary component, $\boundary_1A$, on $P$, and the
    other, $\boundary_2A$, on $\boundary N\setminus P$.  We may isotop
    $\boundary_2A$ to lie on $P$ and use~\autoref*{no-annuli-on-P} to
    reduce $|P\cap P'|$.

  \item If $P\cap P'$ consists of a single arc which separates $P'$,
    then there is a separating, incompressible annulus $A$ properly
    embedded in $H$ with one boundary component, $\boundary_1 A$,
    lying in $\boundary H\setminus P$ and the other, $\boundary_2 A$,
    meeting $P$ in a single separating arc.  Let $\rho$, $\sigma,$ and
    $\tau$ be the components of $\boundary P$ where  $\boundary_2
    A\cap\rho\neq\emptyset$ and $\boundary_1 A$ is parallel to
    $\sigma$.

    Recall that $\boundary$--compressing disks for $P$ in $H$ are
    product disks.  There is a $\boundary$--compressing disk
    $D_{\rho\sigma}$ for $P$ meeting $\rho$ and $\sigma$, and another
    one, $D_{\rho\tau}$, meeting $\rho$ and $\tau$.  Furthermore, we
    may choose these disks to be disjoint from $\boundary_2A$ in $P$.
    Since $\boundary_2 A$ is homotopic to $\sigma$ in $H$,
    $\algint{\boundary_2A}{D_{\sigma\rho}}=1$ and $\algint{\boundary_2
      A}{D_{\tau\rho}} = 0$.  However, there are no essential arcs in
    $\boundary H\setminus P$ which connect $\rho$ to itself and have
    these intersection numbers.  Therefore we may isotop $\boundary_2
    A$ to lie entirely in $P$, and then we are in a case already dealt
    with.

  \item If there is a pair of arcs of intersection of $P\cap P'$ which
    are parallel in $P'$, let $D$ be a subdisk of $P'$ which is
    bounded by two such arcs whose interior is disjoint from $P$.  If
    $D\subseteq H$, then $D$ must be inessential
    by~\autoref*{min-separating-disks}, so we can reduce $|P\cap P'|$.
    Therefore suppose that $D\subseteq J$ and that $D$ is an $(r, s)$
    disk in the sense that it intersects
    $\boundary_1P\cup\boundary_2P$ in $r$ points and $\boundary_3P$ in
    $s$ points (\cf proof of~\autoref*{P-in-J}).  Then $r+s = 4$.
    Suppose further that $(J, P)$ arises as the exterior of an $m/n$
    rational tangle as in~\autoref*{section:knots}.  Let $\inv{m}$ be
    the mod $n$ inverse of $-m$ such that $2|\inv{m}|\leq n$.  Since
    $L$ is hyperbolic, $|\inv{m}|>1$.  Note also that $r$ is even
    because $D\cap P$ consists of two parallel arcs.  Therefore
    by~\cite[Lemma 3.4]{Wu12}, $s\geq 4$, and so $D$ is a $(0,4)$
    disk.  However, this is impossible by~\cite[Lemma 2.3(1)]{Wu12}.

  \item If there are no parallel arcs of intersection of $P\cap P'$ in
    $P$ or $P'$, then $P\cap P'$ consists of three mutually
    nonparallel arcs in both $P$ and $P'$.  We may choose a subdisk
    $D$ of $P'$ bounded by these arcs such that $D\subseteq J$.  In
    this case, $D$ is a $(4, 2)$ disk.  However, from the previous
    argument we know that $s\geq 4$, a contradiction.
  \end{enumerate}
\end{proof}

\begin{prop}\label{infly-many-examples}
  Let $L_1$ and $L_2$ be hyperbolic $2$--bridge knots which are not
  mirror images, and let $p,q,p'$, and $q'$ be integers with $p<q$,
  $p'<q'$, $\gcd(p,q)=1$, and $\gcd(p',q')=1$.  Then there is a
  homeomorphism $M^{L_1}_{p,q}\to M^{L_2}_{p',q'}$  taking
  $K^{L_1}_{p,q}$ to $K^{L_2}_{p',q'}$ iff $L_1=L_2$, $p=p'$, and
  $q=q'$.
\end{prop}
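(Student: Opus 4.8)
The ``if'' direction is immediate: when $L_1=L_2$, $p=p'$, and $q=q'$ the two pairs arise from the same construction, which by the discussion in~\autoref*{section:knots} (using the involutions $\tau$ and $\sigma$ together with the tangle involution of $J$) produces a pair $(M^L_{p,q},K^L_{p,q})$ that is well defined up to equivalence, independent of the auxiliary choices. So the work lies entirely in the ``only if'' direction, and the plan is: given a homeomorphism $h\co M^{L_1}_{p,q}\to M^{L_2}_{p',q'}$ carrying $K^{L_1}_{p,q}$ to $K^{L_2}_{p',q'}$, first upgrade~\autoref*{boundary-P-unique} so that $h$ respects the decompositions $M=J\cup_P H$ and $M'=J'\cup_{P'} H'$; then read off $p$ and $q$ from the intersection numbers of~\autoref*{attaching-2-h}; then read off $L$ by recognizing $J$ as an unknotting-tunnel complement and applying the Gordon--Luecke knot complement theorem.

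Carrying this out: by~\autoref*{boundary-P-unique} we may isotope $h$ so that $h(\boundary P)=\boundary P'$. I would then note that $h(P)$ and $P'$ are properly embedded incompressible $3$--punctured spheres in $M'$ with the same boundary, re-run the innermost-disk/outermost-arc analysis from the proof of~\autoref*{boundary-P-unique} (now with identical boundary curves, so that all intersection arcs and circles are removable) to make $h(P)$ disjoint from $P'$, and argue, as is standard for incompressible surfaces in an irreducible manifold, that two disjoint such surfaces with the same boundary cobound a product region and are therefore isotopic; a further isotopy of $h$ then arranges $h(P)=P'$. Since $h(K)=K'\subseteq\interior{H'}$ and $H'$ is the component of $M'$ cut along $P'$ that meets $K'$, this forces $h(H)=H'$ and $h(J)=J'$, so $h$ restricts to homeomorphisms of pairs $(H,P,K)\to(H',P',K')$ and $(J,P)\to(J',P')$.

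Next I would pin down the distinguished boundary components. The curve $\boundary_3P=\gamma$ is the unique component of $\boundary P$ that fails to be primitive in $J$: the other two components are the meridians $\alpha,\beta$ of $L_1$, which are primitive, whereas $J[\gamma]=E(L_1)$ is a nontrivial knot exterior. Primitivity is a homeomorphism invariant, so $h$ takes $\boundary_3P$ to a curve isotopic to $\boundary_3P'$ and takes $\{\boundary_1P,\boundary_2P\}$ to $\{\boundary_1P',\boundary_2P'\}$. By~\autoref*{attaching-2-h}, capping $H$ with a $2$--handle along $\boundary_1P$ (resp.\ $\boundary_2P$) produces a knot in a solid torus meeting a meridian disk algebraically $p$ (resp.\ $q$) times, and this algebraic intersection number is a homeomorphism invariant of the resulting knot-in-solid-torus; applying this to $H'$ and using $h(H)=H'$, $h(K)=K'$ gives $\{p,q\}=\{p',q'\}$, whence $p=p'$ and $q=q'$ because $p<q$ and $p'<q'$. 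Finally, since $h|_J\co J\to J'$ sends $\boundary_3P=\gamma$ to a curve isotopic to $\boundary_3P'=\gamma'$, after one more isotopy it sends $\gamma$ to $\gamma'$ and hence extends over the unknotting-tunnel $2$--handles to a homeomorphism $E(L_1)=J[\gamma]\to J'[\gamma']=E(L_2)$; by Gordon--Luecke, $L_1$ and $L_2$ are then the same knot or mirror images, and since they are assumed not to be mirror images, $L_1=L_2$.

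The step I expect to be the main obstacle is the first one: promoting~\autoref*{boundary-P-unique} from an isotopy of the boundary multicurves to an honest isotopy with $h(P)=P'$. This requires both that $h(P)$ can be pushed off $P'$ (a rerun of the cut-and-paste in the proof of~\autoref*{boundary-P-unique}, now easier because the two surfaces have identical boundary) and that disjoint incompressible $3$--punctured spheres with the same boundary in a handlebody are parallel. Once that is in place, the remainder is routine bookkeeping with invariants already established.
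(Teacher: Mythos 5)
Your proof is correct and follows essentially the same route as the paper: invoke \autoref*{boundary-P-unique} to fix the isotopy class of $\boundary P$, attach $2$--handles along its components to produce the invariant triple (a $2$--bridge knot exterior plus two solid tori containing $K$), recover $p,q$ from \autoref*{attaching-2-h}, and recover $L$ from the knot complement theorem. The only divergence is that you upgrade the boundary isotopy to an honest isotopy $h(P)=P'$ before cutting along $P$; the paper sidesteps that step (and its parallel-surfaces lemma) by attaching the $2$--handles to $M$ directly along the curves of $\boundary P\subseteq\boundary M$, which depends only on their isotopy classes in $\boundary M$.
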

\begin{proof}
  When $L$ is a hyperbolic 2--bridge knot, the triple of spaces
  obtained by attaching a $2$--handle along each component of
  $\boundary P^L$ is an invariant of $(M^L, K^L)$
  by~\autoref*{boundary-P-unique}.  After attaching a $2$--handle
  along $\boundary_3 P^{L_i}\subseteq M^{L_i}$, we obtain $E(L_i)$,
  $i=1,2$.  Since the other two components of $\boundary P^{L_i}$ are
  primitive, attaching a 2--handle along them yields a solid torus.

  First, focus on the 2--handle whose attaching curve is $\boundary_3
  P^{L_i}$.  A homeomorphism of $(M^{L_1}, K^{L_1})$ with $(M^{L_1},
  K^{L_2})$ would extend across this $2$--handle to give a
  homeomorphism of $E(L_1)$ with $E(L_2)$.  However, these spaces are
  not homeomorphic unless $L_1=L_2$~\cite{knots-are-determined}.

  Similarly, a homeomorphism of $(M^{L}_{p,q}, K^L_{p,q})$ with
  $(M^L_{p',q'}, K^L_{p',q'})$ would extend across a $2$--handle
  attached along $\boundary_1P$ or $\boundary_2 P$ to give 
  homeomorphisms $h_1$ and $h_2$ of solid tori such that
  $h_i(K_{p,q})=K_{p',q'}$, $i=1,2$.  By~\autoref*{attaching-2-h}, we
  must have $p=p'$ and $q=q'$.
\end{proof}

\begin{prop}
  The spaces $(M(\lambda), K')$ do not arise from the construction
  of~\autoref*{section:knots}.
\end{prop}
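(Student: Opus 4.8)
The plan is to argue by contradiction. Suppose $(M(\lambda),K')$ arises from the construction of~\autoref*{section:knots}, so that $(M(\lambda),K')\cong(M^{L'}_{p',q'},K^{L'}_{p',q'})$ for some nontrivial $2$--bridge knot $L'$ and relatively prime integers $p'<q'$, and let $\bar P\subseteq M(\lambda)$ be the image of the associated $3$--punctured sphere. Then $\bar P$ is properly embedded, incompressible in $M(\lambda)$ and $\partial$--incompressible in $\overline{M(\lambda)\setminus N(K')}$, and it cuts $M(\lambda)$ into a rational tangle exterior $\bar J$ and a piece $\bar H$ with $\bar H\cong\bar P\times I$ and $K'\subseteq\bar H$. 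The starting observation is that $\overline{M(\lambda)\setminus N(K')}=\overline{M\setminus N(K)}=:X$: the genus--$2$ component of $\partial X$ is $\partial M=\partial M(\lambda)$, the torus component is $\partial N(K)=\partial N(K')$, and on that torus the meridian of $K$ is $\mu$ while the meridian of $K'$ is the slope $\lambda$ (as $K'$ is the core of the $\lambda$--filling). Hence $P$ and $\bar P$ are both properly embedded, $\partial$--incompressible $3$--punctured spheres in $X$, with $P$ incompressible in $X(\mu)=M$ and $\bar P$ incompressible in $X(\lambda)=M(\lambda)$.

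The main step is to show that $\bar P$ is isotopic to $P$ in $X$. I would minimize $|P\cap\bar P|$ and rerun the case analysis in the proof of~\autoref*{boundary-P-unique}: circles or arcs trivial in $P$ or $\bar P$ are eliminated using incompressibility together with~\autoref*{P-in-J} and~\autoref*{boundary-compressing-disks-meet-K}; essential circles are eliminated using~\autoref*{no-annuli-on-P}; and separating or parallel arcs are eliminated using~\autoref*{min-separating-disks} and the rational tangle lemmas of~\cite{Wu12}. Each of these steps uses only the $\partial$--incompressibility of $P$ and of $\bar P$ in $X$ and the structure of the two pieces each surface cuts off — a once--punctured product on one side and a rational tangle exterior on the other — which holds for $\bar P$ exactly as for $P$, so the argument applies even though $P$ and $\bar P$ live in different Dehn fillings of $X$. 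Once $|P\cap\bar P|=0$, the incompressible surface $\bar P$ is disjoint from $P$ and cannot be engulfed by either side of $P$ (again by these lemmas), so it is isotopic to $P$. I expect this adaptation to absorb essentially all of the work; one must also treat separately the case in which $L$ or $L'$ is a $(2,n)$--torus knot (cf.\ the appeal to hyperbolicity in~\autoref*{boundary-P-unique}), and the degenerate case $p=1$, in which~\autoref*{2-handle-dual-knot} no longer yields a nontrivial Seifert piece and a different argument is required.

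Finally I would extract the contradiction. After the isotopy $\bar P=P$, the two sides of $\bar P$ in $M(\lambda)$ are $H(\lambda)$ and $J$; since $K'\subseteq H(\lambda)$, the knot--containing side $\bar H$ must be $H(\lambda)$, so $H(\lambda)\cong\bar P\times I$. Moreover $\bar J=J$, so the homeomorphism carries the boundary component $\partial_3\bar P$ of $\bar P$ that is non--primitive in $\bar J$ onto the component $\partial_3 P$ of $P$ that is non--primitive in $J$. But in the product $\bar H\cong\bar P\times I$ the curve $\partial_3\bar P$ is a cuff of the base, hence primitive, so $\partial_3 P$ would be primitive in $H(\lambda)$ and $H(\lambda)[\partial_3 P]$ would be a solid torus. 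This contradicts~\autoref*{2-handle-dual-knot}, which identifies $H(\lambda)[\partial_3 P]$ with a Seifert fibered space over the disk with exceptional fibers of orders $p,q>1$, a space with fundamental group $\presentation{x,y}{x^p=y^q}$, which is not a solid torus.
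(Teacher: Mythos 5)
Your proposal follows the same basic strategy as the paper --- show that the $3$--punctured sphere coming from a hypothetical construction of $(M(\lambda),K')$ must coincide with the image of $P$, then use the computation in \autoref*{2-handle-dual-knot} to derive a contradiction --- but the two endgames differ, and yours is slightly more demanding than it needs to be. The paper simply quotes the proof of \autoref*{infly-many-examples}: the multiset of spaces obtained by attaching $2$--handles along the components of $\boundary P$ is an invariant, it must consist of a $2$--bridge knot exterior and two solid tori, yet $M(\lambda)[\boundary_1P]$ contains a lens space summand of order $p$ and so is reducible. This only requires that $\boundary\bar P$ be isotopic to $\boundary P$ as a multicurve in $\boundary M(\lambda)$, which is exactly what the \autoref*{boundary-P-unique} machinery delivers. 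You instead ask for an isotopy of the surfaces $\bar P$ and $P$ themselves, so that the product side $\bar H\cong\bar P\times I$ can be identified with $H(\lambda)$; the sentence ``cannot be engulfed by either side of $P$ \dots\ so it is isotopic to $P$'' is the one genuinely unproven step, since after making $P\cap\bar P=\emptyset$ you still must show $\bar P$ is parallel to $P$ rather than merely boundary-parallel to it, and neither \autoref*{boundary-P-unique} nor its proof gives you that. The gap is avoidable: with only the boundary isotopy in hand, $M(\lambda)[\boundary_3\bar P]$ equals $M(\lambda)[\gamma]$ for some component $\gamma$ of $\boundary P$, and comparing the resulting triple of fillings (a $2$--bridge exterior and two solid tori versus a manifold with a lens space or nontrivial Seifert summand) finishes the argument exactly as in the paper. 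Your other observations are apt and apply equally to the paper's own proof: the appeal to the uniqueness of $\boundary P$ requires rerunning the case analysis with $P\subseteq M(\lambda)$ (which does not itself arise from a construction of $(M(\lambda),K')$), the hyperbolicity of $L$ enters through \autoref*{boundary-P-unique}, and the case $p=1$ degenerates.
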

\begin{proof}
  From the proof of~\autoref*{infly-many-examples}, we know that the
  triple of spaces obtained by attaching $2$--handles along the
  components of $\boundary P$ is an invariant of the spaces
  constructed in~\autoref*{section:knots}.  Furthermore, this
  invariant consists of a $2$--bridge knot exterior and two solid
  tori.  But from the proof of~\autoref*{2-handle-dual-knot} we see
  that attaching $2$--handles to $M(\lambda)$ along $\boundary_1 P$
  yields the connect sum of a lens space of order $p$ and a solid
  torus.
\end{proof}

\section{\texorpdfstring{The position of tunnels}{The position of tunnels}}
Fix a nontrivial $2$--bridge knot $L$ and integers $0<p<q$ with
$\gcd(p,q)=1$, and let $(M,K)=(M^L_{p,q},K^L_{p,q})$.  Suppose that
$K$ is 1--bridge in $M$ so that there is a tunnel $t$ for $K$ as
in~\autoref*{1-bridge-iff-1-tunnel}.  Then $M'=\closure{M\setminus
  N(K\cup t)}$ is a handlebody of genus three.  In this section we
will isotop $t$ to lie in the handlebody $H$ while keeping
$K\subseteq H$.  Then we will isotop $t$ to be disjoint from the disk
$E$ described in~\autoref*{chapter:knots}.  This will allow us to make
arguments about $\boundary$--compressing disks for $P$ in $M'$.

\subsection{\texorpdfstring{The position of $t$ with respect to
    $P$}{The position of t with respect to P}}
First suppose that we have isotoped $t$ to minimize the number of
intersections of $P$ and $t$ while keeping $K\subseteq H$.  Define
$P'=P\cap M'$ and let $M'=H'\cup_{P'} J'$ where $H'\subseteq H$ and
$J'\subseteq J$.  Note that $P'$ is incompressible in $M'$ since if it
were not, we could find a compressing disk and use it to reduce
$|P\cap t|$.

Let $D$ be a meridian disk for $M'$, suppose that $|P\cap t|> 0$, and
suppose further that $D$ does not intersect $P'$.  Then we may isotop
$D$ so that $\boundary D\cap \boundary N(t)$ is empty, and therefore
$\boundary D$ lies either on $\boundary N(K)$, $\boundary J\setminus
P$, or $\boundary H\setminus P$.  The first cannot happen
by~\autoref*{K-atoroidal}, and the second two are impossible because
both of these surfaces are incompressible.  Therefore either $t\cap P$
is empty or $D$ meets $P'$.

\begin{lemma}\label{t-cap-P-is-empty}
  The intersection $t\cap P$ is empty.
\end{lemma}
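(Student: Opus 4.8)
The plan is to argue by contradiction: assume $t\cap P\neq\emptyset$ and has been minimized, and use the characterization of meridian disks of $M'$ to force a contradiction. By the discussion immediately preceding the lemma, if $|t\cap P|>0$ then every meridian disk $D$ of $M'$ meets $P'=P\cap M'$. So the first step is to pick a meridian disk $D$ of $M'$ meeting $P'$ minimally, and analyze $D\cap P'$ as a collection of arcs and circles. Since $P'$ is incompressible in $M'$ (noted above), circles of intersection can be removed by an innermost-disk argument; similarly, an arc of $D\cap P'$ that is trivial in $P'$ can be removed, either by isotoping $D$ or — if the outermost trivial arc cuts off a subdisk of $D$ lying in $H'$ or $J'$ — by recognizing it as a $\boundary$-compressing disk for $P$ in the knot exterior, which is excluded by \autoref*{P-in-J} (for the $J$ side) and \autoref*{boundary-compressing-disks-meet-K} together with \autoref*{K-atoroidal} (for the $H$ side), since such a disk would miss $K$. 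Thus we may assume every arc of $D\cap P'$ is essential in $P'$.

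Next I would look at an outermost arc $\alpha$ of $D\cap P'$ in $D$, cutting off a subdisk $\delta\subseteq D$ whose interior misses $P'$, hence lies in $H'$ or $J'$; say $\delta\subseteq H'$ (the $J'$ case is symmetric). The boundary of $\delta$ consists of $\alpha\subseteq P'$ together with an arc on $\boundary M'$; because $t\cap P\neq\emptyset$, part of that boundary arc may run along $\boundary N(t)$. The key point is to push $\delta$ across a subarc of $t$ that pierces $P$: $\delta$ exhibits an arc of $t$ running from one side of $P$ and back, and one uses $\delta$ to guide an isotopy of $t$ removing (at least) two points of $t\cap P$, contradicting minimality. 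Here one must be careful that $\delta$ genuinely "sees" a returning subarc of $t$ rather than some inessential feature — this is where the essentiality of $\alpha$ in $P'$ and the minimality of $|D\cap P'|$ are used to ensure the subarc of $t$ cut off is not already $\boundary$-parallel rel $P$.

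The main obstacle I expect is the bookkeeping in the last step: when $t\cap P\neq\emptyset$, the surface $P'$ has extra boundary components coming from $\boundary N(t)\cap P$, and an outermost subdisk $\delta$ of $D$ can have its boundary broken into several arcs alternating between $P'$, $\boundary N(t)$, $\boundary N(K)$, and the rest of $\boundary M$. One has to check that in every such combinatorial configuration $\delta$ yields either an isotopy reducing $|t\cap P|$ or one of the forbidden $\boundary$-compressions of $P$ in a knot exterior. A clean way to organize this is to first reduce to the case where $\boundary\delta$ meets $\boundary N(t)$ in a controlled way (using that $D$ is disjoint from $t$, so $\boundary\delta\cap\boundary N(t)$ consists of cocore-parallel arcs), and then run the innermost/outermost argument one more level down. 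Once this is handled, the contradiction with minimality of $|t\cap P|$ completes the proof, so that in fact $t\cap P=\emptyset$.
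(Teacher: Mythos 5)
Your overall strategy is the paper's: take a meridian disk $D$ of $M'$ meeting $P'=P\cap M'$ (forced by the discussion preceding the lemma), minimize and reduce $D\cap P'$ to arcs essential in $P'$, and analyze an outermost arc, sorting cases by whether its endpoints lie on old boundary components (components of $\boundary P$) or new ones (components of $\boundary N(t)\cap P$). Arcs joining old components to old components give forbidden $\boundary$--compressions of $P$, excluded exactly as you say, and arcs with an endpoint on a new component and the other endpoint elsewhere guide isotopies of $t$ reducing $|t\cap P|$ by one or two. Up to here your proposal matches the paper.

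The gap is the case of an outermost arc $\alpha$ with both endpoints on the same new boundary component which extends to an \emph{essential} simple closed curve in $P$. This configuration produces neither a $\boundary$--compression of $P$ nor a subdisk that visibly pushes a returning subarc of $t$ back through $P$: the outermost subdisk instead extends to an incompressible annulus $A$ in $H$ or $J$ with one boundary component on $P$ and the other on $\boundary N(K)$ or on the rest of the boundary. The paper rules out the $\boundary N(K)$ possibility using \autoref*{K-atoroidal}, observes that the remaining boundary component is parallel to a component of $\boundary P$, and then invokes \autoref*{no-annuli-on-P} to isotop $A$ into $P$, which is what finally reduces $|t\cap P|$. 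Your stated dichotomy (``isotopy of $t$ across $\delta$'' versus ``forbidden $\boundary$--compression'') does not cover this case, and the needed ingredient --- that incompressible annuli in $H$ or $J$ with boundary on $P$ can be pushed into $P$ --- is a standalone lemma rather than bookkeeping. There is also the subcase where such an $\alpha$ extends to an inessential curve in $P$, which the paper dispatches by capping off to obtain a compressing disk for one of the incompressible surfaces $\closure{\boundary J\setminus P}$, $\boundary N(K)$, or $\closure{\boundary H\setminus P}$; that part is closer to routine but still has to be said explicitly.
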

\begin{proof}
  We may isotop $D$ so that it intersects $P'$ minimally and $D\cap
  P'$ consists of arcs essential in $P'$.  An outermost arc in $D$
  gives a disk $D'$ with $\boundary D' = \alpha\cup\beta$, $\alpha\subseteq P'$
  essential and $\beta\subseteq \boundary M'$.

  Call components of $\boundary P'\setminus\boundary P$ \defn{new}
  boundary components and components of $\boundary P$ \defn{old}.  Then
  there are several cases:

  \begin{enumerate}
  \item The arc $\alpha$ connects two distinct old boundary
    components.  Then we get a boundary compressing disk for $P$,
    which must lie in $H$ by~\autoref*{P-in-J}.  The boundary
    compression is disjoint from $K$.  However, there are
    no $\boundary$--compressing disks for $P$ in $H$ that do not meet
    $K$ by~\autoref*{boundary-compressing-disks-meet-K}.

  \item The arc $\alpha$ connects an old boundary component to itself.
    Suppose that $\alpha$ is nontrivial in $P$.  Then we may isotop
    $\boundary D'$ so that $\boundary D'\cap \boundary
    N(t)=\emptyset$.  Therefore $D'$ is a $\boundary$--compressing
    disk for $P$ in $J$ or $H$, and since we know that $P$ is
    $\boundary$--incompressible in $J$, $D'$ must be a
    $\boundary$--compressing disk for $P$ in $H$.  
    However, $D'\cap K=\emptyset$,
    contradicting~\autoref*{boundary-compressing-disks-meet-K}.


    If $\alpha$ is trivial in $P$, then $\beta$ must be inessential in
    $\closure{\boundary J\setminus P}$ or $\closure{\boundary
      H\setminus P}$ because otherwise we would obtain a compressing
    disk for this incompressible surface.  But in this case we may
    isotop $D'$ to be a compressing disk for the incompressible
    surface $P'$.

  \item The arc $\alpha$ connects an old and a new boundary component.
    Then $D'$ guides an isotopy of $t$ reducing the number of
    intersections of $t$ with $P$ by one and leaving one endpoint of
    $t$ on the other side of $P$.

  \item The arc $\alpha$ connects two new boundary components.  Then
    $D'$ guides an isotopy of $t$ reducing the number of intersections
    of $t$ with $P$ by two.

  \item The arc $\alpha$ connects a new component to itself. If
    $\alpha$ extends to an inessential \scc in $P$, then $\beta$ must
    be essential in $\closure{\boundary H'\setminus P'}$ or
    $\closure{\boundary J'\setminus P'}$ because otherwise we would
    get a compressing disk for $P'$ by pushing $\beta$ to $P'$.  The
    disk $D'$ extends to an annulus in $H$ or $J$, and we can cap off
    the boundary component containing $\alpha$ in $P$ (since this
    curve is inessential there) to get a compressing disk for
    $\closure{\boundary J\setminus P}$, $\boundary N(K)$,
    or $\closure{\boundary H\setminus P}$.  These are impossible
    by~\autoref*{P-in-J}, \autoref*{K-atoroidal}, and the fact that
    $H\cong P\times I$, respectively.

    So suppose that $\alpha$ extends to an essential \scc in $P$.
    Then $D'$ extends to an annulus $A$ in $N=J$ or $H$ with one
    boundary component, $\boundary_1A$, on $P$, and the other,
    $\boundary_2A$, on $\boundary N$.  The surface $A$ is
    incompressible because $\alpha$ extends to an essential \scc in
    $P$.  By~\autoref*{K-atoroidal}, $\boundary_2A$ cannot lie on
    $\boundary N(K)$.  Therefore $\boundary_2A$ is parallel to a
    component of $\boundary P$ in $\boundary N$.  We may isotop $A$ so
    that $\boundary A\subseteq P$ and use~\autoref*{no-annuli-on-P} to
    isotop $A$ through $P$, reducing $|t\cap P|$.
  \end{enumerate}
\end{proof}

\subsection{\texorpdfstring{The position of $t$ with respect to
    $E$}{The position of t with respect to E}}
By~\autoref*{t-cap-P-is-empty}, we may take $t\cap P=\emptyset$.  Let
$E'$ be the planar surface $E\cap H'$.  Note that $P$ is
$\boundary$--compressible in $H'$ by~\autoref*{sfce-in-hbody}
and~\autoref*{P-in-J}.

\begin{lemma}\label{boundary-D-cap-P}
  Let $D$ be a $\boundary$--compressing disk for $P$ in $H'$ which
  minimizes the tuple $(|D\cap E'|, |D\cap\boundary N(t)|, |\boundary
  D\cap \boundary E'\cap P|)$.
  \begin{itemize}
    \item If $\boundary D$ does not separate $P$, then $\boundary
      D\cap\boundary E'\cap P=\emptyset$.
    \item If $\boundary D\cap P$ connects $\boundary_1P$ to itself,
      then $|\boundary D\cap\boundary E'\cap P|=q-1$.
    \item If $\boundary D\cap P$ connects $\boundary_2P$ to itself,
      then $|\boundary D\cap\boundary E'\cap P|=p-1$.
    \item If $\boundary D\cap P$ connects $\boundary_3P$ to itself, then
      $|\boundary D\cap\boundary E'\cap P|=1$.
  \end{itemize}
\end{lemma}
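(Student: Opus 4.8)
The plan is to analyze a $\boundary$–compressing disk $D$ for $P$ in $H'$ together with its intersections with the planar surface $E'=E\cap H'$, using the combinatorics of $\boundary E\cap P$ recorded in \autoref*{minimum-of-E-cap-P}, \autoref*{fig:PR}, and \autoref*{P-R-remarks}. Recall from the earlier lemma classifying $\boundary$–compressing disks for $P$ in $H\cong P\times I$ that there are exactly six such disks up to isotopy in $H$, namely $\lambda\times I$ for the six essential arcs $\lambda$ in $P$; three of these are nonseparating (connecting two distinct components of $\boundary P$) and three separating (connecting a component to itself). Since $t\cap P=\emptyset$ by \autoref*{t-cap-P-is-empty}, the arc $\boundary D\cap P$ is one of these six arcs, and the asserted counts $0,q-1,p-1,1$ are exactly $|\lambda\cap\boundary E\cap P|$ for the corresponding product arc $\lambda$: an arc cutting off $\boundary_1P$ must cross every $b^P_i$ arc once (there are $q-1$ of them) since these are the arcs of $\boundary E$ separating $\boundary_2 P$ from $\boundary_3P$ inside $P$ relative to the position of $\boundary_1P$; symmetrically an arc cutting off $\boundary_2P$ crosses the $p-1$ arcs $d^P_i$; an arc cutting off $\boundary_3P$ crosses only the single arc $a^P$; and each nonseparating product arc can be pushed off $\boundary E\cap P$ entirely. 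So the content of the lemma is that the minimizing disk $D$ actually realizes this product-arc count for $\boundary D\cap\boundary E'\cap P$, rather than some larger number forced by the fact that $D$ lives only in $H'=\closure{H\setminus N(K\cup t)}$, not all of $H$.

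First I would set up the intersection pattern $D\cap E'$. After an innermost-disk/outermost-arc cleanup we may assume $D\cap E'$ has no closed components and that each arc of $D\cap E'$ is essential in $E'$ (a trivial arc in $E'$ or in $D$ would allow a surgery reducing one of the coordinates of the minimized tuple, using that $E'$ is planar and incompressible and that $E$ is the unique nonseparating disk in $\closure{H\setminus N(K)}$ by \autoref*{E-is-unique}). Similarly minimality of $|D\cap\boundary N(t)|$ means no arc of $\boundary D$ on $\boundary N(t)$ together with a subarc of $\boundary D\cap\closure{\boundary H\setminus P}$ or $\boundary D\cap P$ bounds a disk guiding an isotopy of $t$. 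The key move is then to take an outermost arc $\alpha$ of $D\cap E'$ in $D$, cutting off a subdisk $\delta\subseteq D$ with $\interior\delta\cap E'=\emptyset$; $\delta$ is a disk in $\closure{H\setminus N(E\cup K\cup t)}$, which (cutting $H$ along $E$) is a genus-one handlebody — essentially a solid torus with $K$ and $t$ inside — and $\boundary\delta$ runs along $E'$, along $\boundary N(t)$, along $\boundary H$, and along $P$. Analyzing where $\delta$ can go, exactly as in the case analysis of \autoref*{t-cap-P-is-empty}, forces $\delta$ to guide a further reduction unless $D\cap E'=\emptyset$; so at the minimum $D$ is disjoint from $E'$, and similarly from $\boundary N(t)$, hence $D$ is (isotopic in $H$ to) one of the six product disks $\lambda\times I$ with $\boundary D\cap P=\lambda$ pushed off $t$ and off $E'\cap P$ — wait: $D$ cannot be pushed off $E'$ \emph{and} keep $\boundary D\cap P=\lambda$ if $\lambda$ genuinely crosses $\boundary E\cap P$, so the correct conclusion is that $D$ meets $E'$ in exactly the arcs forced by $\lambda\cap\boundary E\cap P$ and no others. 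I would phrase this last step as: having shown no outermost arc of $D\cap E'$ can be removed, each arc of $D\cap E'$ is parallel in $E'$ to a subarc of $\boundary E\cap P$ along which $\boundary D\cap P=\lambda$ crosses it, giving the bijection between $D\cap E'$ and $\lambda\cap\boundary E\cap P$, hence $|\boundary D\cap\boundary E'\cap P|=|\lambda\cap\boundary E\cap P|$, which is $0$, $q-1$, $p-1$, or $1$ according to the four cases.

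The main obstacle is the outermost-disk analysis of $\delta$: unlike in \autoref*{t-cap-P-is-empty}, $\delta$ can have boundary meeting all four of $E'$, $\boundary N(t)$, $P$, and $\closure{\boundary H\setminus P}$, so one must carefully enumerate the possible combinatorial types of $\boundary\delta$ and show each either contradicts minimality of the tuple $(|D\cap E'|,\,|D\cap\boundary N(t)|,\,|\boundary D\cap\boundary E'\cap P|)$ or produces a $\boundary$–compressing disk for $P$ in $H$ missing $K$ (impossible by \autoref*{boundary-compressing-disks-meet-K}) or a compression of an incompressible surface. In particular one must rule out $\delta$ being parallel, through the solid torus $\closure{H\setminus N(E\cup K)}$, to a disk on the other side in a way that does not reduce the tuple; here the uniqueness of $E$ from \autoref*{E-is-unique}, the count $|\boundary E\cap P|=p+q-1$ from \autoref*{minimum-of-E-cap-P}, and \autoref*{attaching-2-h} (telling us how $K$ sits relative to each $\boundary_iP$) are what pin down that the surviving arcs of $D\cap E'$ are precisely the $\lambda\cap\boundary E\cap P$ arcs and that their number is exactly as claimed. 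I expect the separating case $\boundary D\cap P$ on $\boundary_3 P$ (count $1$) to be the cleanest and the two cases on $\boundary_1P$, $\boundary_2P$ (counts $q-1$, $p-1$) to require the most bookkeeping, since there one must see that $D$ cannot shed any of the $b^P$ or $d^P$ crossings by detouring through $N(t)$.
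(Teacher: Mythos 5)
There is a genuine gap, and it comes from aiming at a much stronger --- and in fact false --- conclusion than the lemma asserts. Your outermost-arc analysis is supposed to end with ``$D$ is disjoint from $E'$ and from $\boundary N(t)$, hence is one of the six product disks,'' which you then soften to ``$D\cap E'$ is in bijection with $\lambda\cap\boundary E\cap P$.'' Neither claim can be correct: by \autoref*{boundary-compressing-disks-meet-K} every $\boundary$--compressing disk for $P$ in $H$ meets $K$, so a $\boundary$--compressing disk for $P$ in $H'=\closure{H\setminus N(K\cup t)}$ must run over $\boundary N(K\cup t)$ and cannot be isotoped to a product disk missing the handle; and the later analysis (\autoref*{t-cap-E-is-empty}, \autoref*{arcs-in-E} through \autoref*{outermost-in-D}) is devoted precisely to minimizing disks $D$ for which $D\cap E$ is nonempty and consists of arcs with endpoints on $\boundary N(t)$ and in $R$, in patterns far richer than $\lambda\cap\boundary E\cap P$. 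If your endgame worked, that entire combinatorial analysis would be vacuous; it does not work because the outermost subdisk $\delta$ need not guide any reduction of the tuple (its boundary can run essentially over the handle $N(K\cup t)$), and no argument is offered for why the surviving arcs of $D\cap E'$ are exactly the ``forced'' ones.

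The statement is much more local than you are treating it: it concerns only the position of the single essential arc $\boundary D\cap P$ relative to the arc system $\boundary E\cap P$ inside the pair of pants $P$, and says nothing about the global pattern of $D\cap E'$. The paper's proof is two--dimensional: since $|\boundary D\cap\boundary E'\cap P|$ is the \emph{last} entry of the lexicographically minimized tuple, it suffices to isotop $\boundary D\cap P$ into minimal position with respect to $\boundary E\cap P$ within $P$, pushing any excess intersection points across $\boundary P$ into $\closure{\boundary H'\setminus P}$ so that $|D\cap E'|$ and $|D\cap\boundary N(t)|$ are unchanged, and then quote the minimal geometric intersection numbers of an essential arc with this arc system in a pair of pants. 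Your computation of those numbers ($0$ for a nonseparating arc; $q-1$, $p-1$, and $1$ for the three separating types, since a separating arc based at $\boundary_iP$ must cross each arc of $\boundary E\cap P$ joining the other two boundary components exactly once and can avoid the rest) is correct and supplies the lower bound; what is missing from your write-up is the simple observation that realizing this minimum is a boundary isotopy that does not disturb the first two coordinates of the tuple, which is all the matching upper bound requires.
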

\begin{proof}
  Recall that the arcs of intersection of $E'$ and $P$ are
  nonseparating according to~\autoref*{minimum-of-E-cap-P}.  The first
  claim follows from the fact that we can isotop nonseparating arcs in
  a 3--punctured sphere to be disjoint.  The isotopy pushes
  intersections of arcs in $P$ to intersections of arcs in
  $\closure{\boundary H'\setminus P}$ and can be chosen so that it
  does not increase $|D\cap E'|$, $|\boundary D\cap P|$, or
  $|\boundary D\cap\boundary N(t)|$.

  An essential separating arc meets a nonseparating arc in a pair of
  pants minimally zero or one times.  Recall that $\boundary E\cap P$
  consists of a single arc connecting $\boundary_1P$ and
  $\boundary_2P$, $p-1$ arcs connecting $\boundary_1P$ and
  $\boundary_3P$, and $q-1$ arcs connecting $\boundary_2P$ and
  $\boundary_3P$.  Therefore we may isotop $D$ to one of the forms
  above.  This isotopy pushes intersections of arcs in $P$ to
  intersections of arcs in $\boundary H'\setminus P$ and can be chosen
  so that it does not increase $|D\cap E'|$, $|\boundary D\cap P|$, or
  $|\boundary D\cap\boundary N(t)|$.
\end{proof}

Let $D$ be a $\boundary$--compressing disk for $P$ in $H'$ which meets
$\boundary N(t)$.  Denote by $\epsilon$ the subarc of $\boundary D$
such that $\epsilon\cap P\neq\emptyset$,
$\boundary\epsilon\subseteq\boundary N(t)$, and
$\interior{\epsilon}\cap\boundary N(t)=\emptyset$.  This is a proper
subarc of $\boundary D$
by~\autoref*{boundary-compressing-disks-meet-K}.  It is the largest
subarc of $\boundary D$ lying in $\boundary H$ which contains
$P\cap\boundary D$.

\begin{lemma}\label{t-cap-E-is-empty}
  We can isotop $t$ so that $t\cap E=\emptyset$.  During the isotopy,
  one foot of $t$ lies in $\boundary H\setminus P$.
\end{lemma}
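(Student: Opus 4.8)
The plan is to argue by contradiction: assume $t$ cannot be isotoped off $E$, and choose $t$ (among all positions with $t\cap P=\emptyset$) to minimize $|t\cap E|$, and then choose a $\boundary$--compressing disk $D$ for $P$ in $H'$ minimizing the tuple of \autoref*{boundary-D-cap-P}. The key object is the arc $\epsilon\subseteq\boundary D$ described just above: it carries all of $\boundary D\cap P$ and has both endpoints on $\boundary N(t)$ (it is a proper subarc of $\boundary D$ by \autoref*{boundary-compressing-disks-meet-K}, so $D$ genuinely meets $N(t)$). The two endpoints of $\epsilon$ sit on meridian disks of $N(t)$; call them $v_0$ and $v_1$. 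The portion of $\boundary D$ outside $N(t)$ other than $\epsilon$ consists of arcs running through $\boundary H\setminus P$ together with the co-core crossings of $N(t)$, and the whole point is that $\epsilon$ together with a spanning arc of $N(t)$ forms a loop meeting $E$ in a controlled number of points.

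First I would record what \autoref*{boundary-D-cap-P} tells us about $|\boundary D\cap\boundary E'\cap P|$ in each of the four cases for the isotopy class of $\boundary D\cap P$, and combine this with \autoref*{minimum-of-E-cap-P} and \autoref*{P-R-remarks} to count how $\epsilon$ meets $\boundary E$ on $\boundary H$. The idea is that $\boundary D$ is made of an arc $\epsilon$ through $P$ plus arcs in $R=\closure{\boundary H\setminus P}$, and since $E'$ is essentially fixed, every intersection of $\boundary D$ with $\boundary E'$ on $\boundary H$ lies either in the $P$--part (counted by the lemma) or in an $R$--arc. I would then use the disk $D$, which is disjoint from $E'$ in its interior after the minimization, to guide an isotopy of $t$ across $D$. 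Pushing $t$ along $D$ either removes a crossing of $\boundary N(t)$ with $\epsilon$ — which reduces $|t\cap E|$ because $D\cap E'=\emptyset$ forces the disk to sweep out a region meeting $E$ only along $\partial$ — or it trades an intersection of $t$ with $E$ for intersections that can be removed by a further innermost-disk argument on $E$ (using that $E$ is a disk, hence $\boundary$--incompressible data is trivial). In the process one foot of $t$ is pushed across $P$ momentarily, which accounts for the parenthetical remark that a foot of $t$ transiently lies in $\boundary H\setminus P$; afterwards we re-minimize $|t\cap P|$ via \autoref*{t-cap-P-is-empty} and repeat.

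The main obstacle I expect is bookkeeping the interaction of three competing complexities — $|D\cap E'|$, $|D\cap\boundary N(t)|$, and $|\boundary D\cap\boundary E'\cap P|$ — so that the isotopy of $t$ guided by $D$ genuinely decreases $|t\cap E|$ without secretly increasing $|t\cap P|$ or creating new intersections of $D$ with $E'$. Concretely, when $\epsilon$ runs through $P$ in one of the nonzero cases (e.g. $\boundary_1P$ to itself, giving $q-1$ forced crossings with $\boundary E'$), I need to check that the band of $D$ cut off by an outermost arc of $D\cap E'$ (if any remained) contradicts the minimality of the tuple, so that in fact $D\cap E'=\emptyset$ and $D$ is a clean guiding disk. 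The separating-versus-nonseparating dichotomy for $\boundary D$ in $P$ matters here: in the nonseparating case \autoref*{boundary-D-cap-P} gives $\boundary D\cap\boundary E'\cap P=\emptyset$, so $D$ is already disjoint from $E$ near $P$ and the isotopy is cleanest; in the separating cases one has to absorb the $p-1$, $q-1$, or $1$ forced crossings into arcs of $\boundary E$ on $\boundary H$ and argue they can be slid off along $R$, which is exactly where \autoref*{P-R-remarks} and the rectangle/hexagon decomposition of $P$ and $R$ get used. Once $|t\cap E|$ is shown to be reducible whenever it is positive, minimality forces $t\cap E=\emptyset$, completing the proof.
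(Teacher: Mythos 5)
There is a genuine gap at the center of your plan: you propose to arrange that $D$ is ``disjoint from $E'$ in its interior after the minimization'' and then use $D$ as a clean guiding disk for an isotopy of $t$. This is impossible precisely when $|t\cap E|>0$, which is the situation you are trying to rule out. Each point of $t\cap E$ contributes a meridian circle of the tube $\boundary N(t)$ to $\boundary E'$, and every arc of $\boundary D\cap\boundary N(t)$ runs the length of the tube and hence crosses each such circle; since $D$ must meet $\boundary N(t)$ (a $\boundary$--compressing disk for $P$ missing $N(t)$ would give one in $H$ disjoint from $K$, contradicting~\autoref*{boundary-compressing-disks-meet-K}), we get $D\cap E'\neq\emptyset$ unavoidably. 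The actual argument therefore has to live with the arcs of $D\cap E'$ and classify the ones outermost in $D$: some configurations (both endpoints in the same arc of $\boundary N(t)\cap\boundary D$, or one endpoint on such an arc and one on an adjacent arc of $\boundary H$ missing $P$) cut off subdisks that guide isotopies of $t$ through $E$ reducing $|t\cap E|$; the remaining configurations (both endpoints on $\boundary H$ away from $P$, endpoints on two different tube arcs, or the residual case in which every arc meeting $\boundary N(t)$ runs to $P$) must each be killed by surgering $E$ along a subdisk of $D$ and contradicting~\autoref*{E-is-unique}, \autoref*{min-separating-disks}, and~\autoref*{minimum-of-E-cap-P}, or by producing a forbidden compressing disk. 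None of this case analysis appears in your sketch, and it is where essentially all of the content of the lemma lies.

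A second, smaller error: you read the clause about the foot of $t$ as saying that the foot transiently crosses $P$ and is pushed back afterwards by re-running~\autoref*{t-cap-P-is-empty}. The statement asserts the opposite: the isotopies reducing $|t\cap E|$ are chosen so that the foot of $t$ stays in $\boundary H\setminus P$ throughout, which is exactly what guarantees that $t\cap P=\emptyset$ is preserved while $|t\cap E|$ decreases. If the foot were allowed to cross $P$, the subsequent re-minimization of $|t\cap P|$ could increase $|t\cap E|$ again, and the descent would not terminate.
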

\begin{proof}
  Isotop $t$ to intersect $E$ minimally, recall that we defined
  $E'=E\cap H'$, and let $D$ be a $\boundary$--compressing disk for
  $P$ in $H'$ minimizing $(|D\cap E'|, |D\cap\boundary N(t)|,
  |\boundary D\cap \boundary E'\cap P|)$.  If $D\cap\boundary
  N(t)=\emptyset$, then $\boundary D$ lies on $\boundary N(K)$ or
  $\boundary H$.  It cannot lie on $\boundary N(K)$ since $D$ is a
  $\boundary$--compressing disk for $P$.  If $\boundary D\subseteq H$,
  then $D$ gives a disk $D'$ in $H$ which is a
  $\boundary$--compressing disk for $P$ but does not meet $K$.  This
  is impossible by~\autoref*{boundary-compressing-disks-meet-K}, and
  therefore $D\cap E'\neq\emptyset$.

  By a standard innermost circle argument we may assume that $D\cap
  E'$ consists of arcs; let $\alpha$ be such an arc outermost in $D$.  

  Suppose that $\alpha$ cuts off a subdisk $D'$ of $D$ with $\boundary
  D'=\alpha\cup\beta$ where $\beta\subseteq\boundary H$ and $\beta\cap
  P=\emptyset$.  Then $\alpha$ is a properly embedded arc in $E$, and
  we can surger $E$ along $D'$ to obtain two new disks $E_1$ and $E_2$
  in $H$.  If $\boundary E_i$ meets $P$, then by considering the
  algebraic intersection of $\boundary E_i$ and $\boundary P$ we see
  that $E_i$ is essential.  If $\boundary E_i$ is disjoint from
  $\boundary P$ then $E_i$ is trivial because $\boundary H\setminus P$
  is incompressible.  We can then isotop $D$ to reduce $D\cap E'$
  while keeping $D$ a $\boundary$--compressing disk for $P$.
  Therefore both $E_1$ and $E_2$ are essential in $H$.  One of these
  new disks, say $E_1$, must be nonseparating since $E$ is.
  By~\autoref*{E-is-unique}, $E_1$ is isotopic to $E$.  However,
  by~\autoref*{min-separating-disks}, $E_2$ meets $P$ in at least
  three arcs, and therefore $E_1$
  violates~\autoref*{minimum-of-E-cap-P}.

  Suppose that $\alpha$, seen as an arc in $D$, has both endpoints in
  the same subarc of $\boundary N(t)\cap\boundary D$.  Then $\alpha$
  cuts off a subdisk $D'$ of $D$ which guides an isotopy of $t$
  through $E$ to reduce $|t\cap E|$ by two.  Note that the foot of $t$
  on $\boundary H$ does not meet $P$ during this isotopy.

  Suppose that $\alpha$ has one endpoint in an arc of $\boundary
  N(t)\cap\boundary D$ and the other in an adjacent arc of $\boundary
  H\cap\boundary D$ so that it cuts of a subdisk $D'$ of $D$ which is
  disjoint from $P$.  Then $D'$ guides an isotopy of $t$ which reduces
  $|t\cap E|$ by one.  The foot of $t$ on $\boundary H$ does not meet
  $P$ during this isotopy.

  Suppose that $\alpha$ has one endpoint in an arc of $\boundary
  N(t)\cap\boundary D$ and the other endpoint in a different arc of
  $\boundary N(t)\cap\boundary D$ so that $\alpha$ cuts off a subdisk
  $D'$ of $D$ which does not meet $P$.  The disk $D'$ extends to an
  embedded annulus $A$ in $H$ with one boundary component on $E$ and
  the other on either $\boundary N(K)$ or $\boundary H$.  We may cap
  off the boundary component of $A$ on $E$ to obtain a disk $D''$ with
  either $\boundary D''\subseteq \boundary N(K)$ or $\boundary
  D''\subseteq \boundary H$.  Note that $\boundary D''$ must be
  essential on $\boundary H$ or $\boundary N(K)$ since otherwise we
  could reduce $|\boundary D\cap \boundary N(t)|$.  No essential curve
  on $\boundary N(K)$ bounds a disk in $\closure{H\setminus N(K)}$, so
  $\boundary D''$ must lie on $\boundary H$.  In this case, note that
  $\boundary D''\cap P=\emptyset$ and $D''\cap K=\emptyset$.  But we
  have already seen that disks which do not meet $K$ must meet $P$.
  Therefore we may rule out such arcs.

  Finally, note that since $t\cap E\neq\emptyset$, every component of
  $\boundary N(t)\cap\boundary D$ contains the endpoint of at least
  one arc of $D\cap E'$.  Therefore there must be an arc of $D\cap E'$
  which is outermost in $D$ and has one of the forms above unless
  every arc meeting $\boundary N(t)$ has its other endpoint in $P$.
  So suppose that this is the case, and recall that $\epsilon$ is the
  largest subarc of $\boundary D$ which lies in $\boundary H$ and
  meets $P$.  Since $\epsilon\cap\boundary E'\cap P\neq\emptyset$, $D$
  must separate $P$ by~\autoref*{boundary-D-cap-P}.  By the minimality
  of $\boundary D\cap \boundary E\cap P$, we know exactly where the
  endpoints of $\epsilon$ lie on $\boundary P$; the possibilities are
  shown in~\autoref*{fig:E-cap-D-separating} (cf.~\autoref*{fig:PR}). 

  \begin{figure}[h!tb]
    \begin{center}
      \def\svgwidth{0.6\textwidth}
      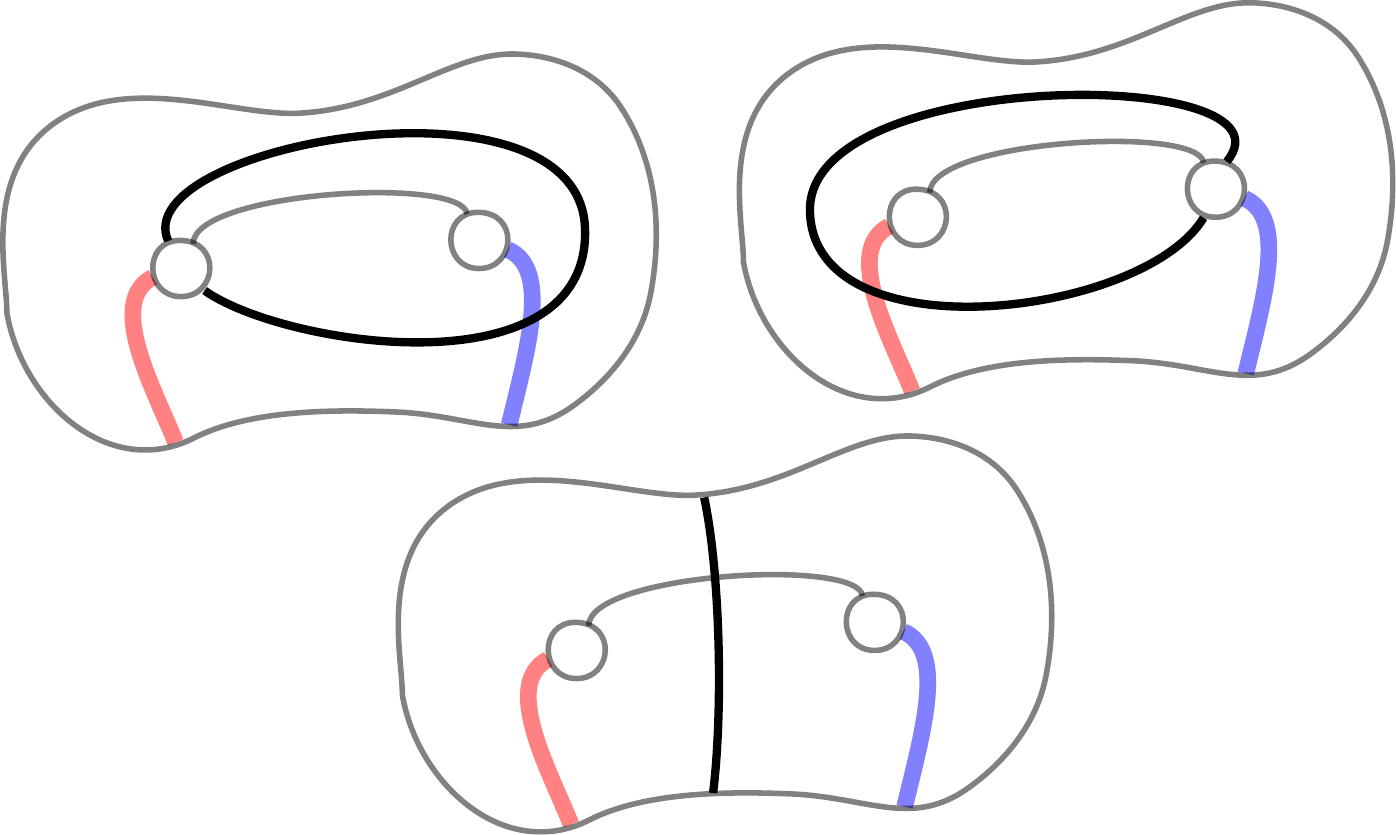
    \end{center}
    \caption{The possibilities for $\boundary D\cap P$ when $\boundary
      D$ is separating (bold)}
    \label{fig:E-cap-D-separating}
  \end{figure}
  
  Examining~\autoref*{fig:PR} we see that $\epsilon\cap R\cap\boundary
  E\neq\emptyset$.  But in this case there is an arc of intersection,
  outermost in $D$, which connects $P$ to an adjacent arc of
  $\boundary H$.  This arc cuts off a subdisk $D'$ of $D$ whose
  interior does not meet $E\subseteq H$.  We may surger $E$ along
  $D'$ to obtain two disks $E_1$ and $E_2$ in $H$.  By considering
  the algebraic intersection of $\boundary E_i$ and $\boundary P$ we
  see that $E_i$ is essential, $i=1,2$.  One of these, say $E_1$, must
  be nonseparating since $E$ is.  By~\autoref*{E-is-unique}, $E_1$ is
  isotopic to $E$.  However, by~\autoref*{min-separating-disks}, $E_2$
  meets $P$ in at least three arcs, and therefore $E_1$
  violates~\autoref*{minimum-of-E-cap-P}.
\end{proof}

\section{\texorpdfstring{Boundary compressing disks for $P$ in
    $H'$}{Boundary compressing disks for P in H'}}
In this section we assume the existence of a tunnel $t$ as
in~\autoref*{1-bridge-iff-1-tunnel} so that $M'=\closure{M\setminus
  N(K\cup t)}$ is a handlebody of genus three.  We may assume that
$H'=\closure{H\setminus N(K\cup t)}$ is a handlebody of genus 3
by~\autoref*{t-cap-P-is-empty}.  Because it is not
$\boundary$--compressible in $J$, the surface $P\subseteq M'$ is
$\boundary$--compressible in $H'$ by~\autoref*{sfce-in-hbody}.  We
want to show that $\boundary$--compressing along any
$\boundary$--compressing disk for $P$ yields an annulus whose core is
not primitive in either $H'$ or $J$,
contradicting~\autoref*{three-punctured-spheres} and showing that $K$
cannot be 1--bridge.

\autoref*{t-cap-E-is-empty} shows that the nonseparating disk $E$
which is disjoint from $K$ in $H$ is also an essential disk in $H'$.
We consider first the case when there is a $\boundary$--compressing
disk disjoint from $E$.

\subsection{\texorpdfstring{Boundary compressing disks disjoint from
    $E$}{Boundary compressing disks disjoint from E}}
Let $D$ be a $\boundary$--compressing disk for $P$ in $H'$ such that
$D\cap E=\emptyset$.  Choose $D$ to minimize $(|\boundary
D\cap\boundary N(t)|, |\boundary D\cap \boundary E\cap P|)$.  Note
that $D$ must be nonseparating in $P$ since a separating disk
necessarily meets $E$ in $P$.  If $D$ does not meet $\boundary N(K)$,
then we may isotop $\boundary D$ so that it lies entirely in
$\boundary H$.  Thus $D$ gives a disk in $H$ with $D\cap K$ empty and
$D\cap P$ consisting of one essential arc, which is impossible
by~\autoref*{boundary-compressing-disks-meet-K}.

\begin{lemma}\label{one-arc-on-K}
  Suppose that $D$ is a boundary compressing disk for $P$ in $H'$ such
  that $D\cap E=\emptyset$.  Then we may isotop $D$ so that $D\cap E$
  is still empty and $\boundary D\cap \boundary N(t)$ consists of two arcs.
\end{lemma}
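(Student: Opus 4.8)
The plan is to analyze the intersections $D \cap \partial N(t)$ and eliminate all but two arcs through a combination of innermost/outermost arc arguments and appeals to the structure results already established. Recall that $\epsilon$ is the largest subarc of $\partial D$ lying in $\partial H$ that contains $P \cap \partial D$; since $D$ is a $\boundary$--compressing disk for $P$ disjoint from $E$ and nonseparating in $P$, by \autoref*{boundary-D-cap-P} we have $\epsilon \cap \boundary E \cap P = \emptyset$, so $\epsilon$ is a proper subarc of $\partial D$ (it meets $K$, by \autoref*{boundary-compressing-disks-meet-K}, hence $\partial D$ must also run along $\partial N(t)$). First I would observe that $\partial D$ decomposes into arcs on $\partial H$ alternating with arcs running over the tunnel $\partial N(t)$, so $|\partial D \cap \partial N(t)|$ is even, say $2k$, and we want to show $k = 1$.

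The key step is to rule out $k \geq 2$. Suppose $k \geq 2$; then $\partial D$ contains at least two arcs on $\partial H$ distinct from the one containing $\epsilon$. I would consider such an arc $\beta$ of $\partial D \cap \partial H$ that does not contain $P \cap \partial D$. Since $\beta$ avoids $P$, it lies in $\closure{\boundary H \setminus P}$. The parallel copies of the tunnel arc on $\partial N(t)$ that $\partial D$ runs over, together with $\beta$, bound a subdisk $D'$ of $D$ (an outermost such $\beta$ works). Now $D'$ either guides an isotopy of $t$ reducing $|\partial D \cap \partial N(t)|$ directly, or $D'$ extends to an annulus in $H$ with one boundary on a meridian of $\partial N(K)$ or on $\partial H$, or it gives a compressing disk for the incompressible surface $\closure{\boundary H \setminus P}$. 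As in the analogous arguments in \autoref*{t-cap-E-is-empty}, the first case contradicts minimality of $|\partial D \cap \partial N(t)|$, and the latter cases are impossible since $\closure{\boundary H \setminus P}$ is incompressible, $\partial N(K)$ bounds no disk in $\closure{H \setminus N(K)}$, and the resulting disk would meet neither $K$ nor $P$ (contradicting \autoref*{boundary-compressing-disks-meet-K} applied through \autoref*{min-separating-disks}). Throughout, I would verify that the isotopies do not create new intersections with $E$, which is where the hypothesis $D \cap E = \emptyset$ is used to keep the primary quantity in the minimized tuple fixed.

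The main obstacle I anticipate is the bookkeeping of which arc of $D$ is outermost and ensuring that, when $k \geq 2$, there genuinely exists an outermost arc $\beta$ on $\partial H$ avoiding $P$ with both ends on $\partial N(t)$ (rather than on $\partial N(K)$, which would behave differently). This requires tracking the cyclic order of the arcs of $\partial D$ around $\partial N(t)$ and $\partial N(K)$, and arguing that the arc containing $\epsilon$ together with the arc(s) meeting $\partial N(K)$ cannot account for all $2k$ tunnel-arcs when $k \geq 2$ — so at least one "pure" arc on $\partial H$ disjoint from both $K$ and $P$ must appear, which then yields the contradiction via the annulus/compressing-disk dichotomy above. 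Once $k \le 1$ is forced, the fact that $\epsilon$ is proper (so $k \geq 1$) gives $k = 1$ exactly, completing the proof.
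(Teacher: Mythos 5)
Your high-level plan (show the number of tunnel arcs is even and that any configuration with more than two can be reduced) matches the goal, but the mechanism you propose for the reduction does not work, and the one idea that actually drives the paper's proof is missing. The paper's argument is local to the tunnel foot: by \autoref*{t-cap-E-is-empty} the foot of $t$ lies in a region $R_t$ of $R\setminus\boundary E$, and every such region is a \emph{disk}. Since $D\cap E=\emptyset$, no arc of $\boundary D\cap\boundary H$ can cross $\boundary E$, and since $\boundary D\cap P$ is the single arc contained in $\epsilon$, no component of $\Delta$ can cross $\boundary P$ either. Hence every component of $\Delta$ is trapped in $R_t$ with both endpoints on the circle $\boundary N(t)\cap R_t$, and in the annulus $R_t\cap H'$ any such arc cuts off a disk meeting only that circle; an outermost one guides an isotopy of $D$ over the tunnel that removes two tunnel arcs without touching $E$. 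You never invoke the fact that the regions of $R\setminus\boundary E$ are disks, and you use the hypothesis $D\cap E=\emptyset$ only to say the isotopies preserve $|D\cap E|$ --- but its essential role is to confine the extra arcs to $R_t$ in the first place. Without that confinement there is no reason your ``guides an isotopy'' case must occur.

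The specific step that fails is your construction of the subdisk $D'$: you say $\beta$ ``together with the parallel copies of the tunnel arc'' bounds a subdisk of $D$. A subdisk of $D$ must be cut off by an arc properly embedded in $D$, and here there are no such arcs --- precisely because $D\cap E=\emptyset$, the interior of $D$ meets nothing you can surger along. Two subarcs of $\boundary D$ do not bound a subdisk of $D$. The trichotomy you then apply (isotopy of $t$ / annulus / compressing disk for $\closure{\boundary H\setminus P}$) is imported from \autoref*{t-cap-E-is-empty}, where those cases arise from genuine outermost arcs of $D\cap E'$ in $D$; it has no analogue here. Two smaller points: for $k\ge 2$ you get $k-1\ge 1$ components of $\Delta$, not ``at least two''; and the worry about arcs of $\boundary D\cap\boundary H$ ending on $\boundary N(K)$ is vacuous, since $K$ lies in the interior of $H$ and the components of $\Delta$ have both endpoints on $\boundary N(t)$ by construction.
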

\begin{proof}
  Suppose the endpoint of the tunnel is in region $R_t$ of
  $R\setminus\boundary E$.  Then $R_t\cap H'\cap\boundary D$ consists
  of a number of arcs, exactly two of which meet $\boundary R_t$.
  Recall that every component of $R\setminus\boundary E$ is a disk.
  Therefore if there are more than two arcs, there is a trivial arc of
  $R_t\cap H'\cap\boundary D$ which we can use to reduce $|\boundary
  D\cap\boundary N(t)|$.
\end{proof}

\begin{lemma}\label{core-not-prim}
  Let $D$ be a $\boundary$--compressing disk for $P$ disjoint from
  $E$.  Then the core of the annulus obtained by
  $\boundary$--compressing $P$ along $D$ is not primitive in either
  $J$ or $H'$.
\end{lemma}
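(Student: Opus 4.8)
The plan is to exploit the tight combinatorial control we already have over $\boundary D$ inside $P$. By \autoref*{t-cap-E-is-empty} we may assume $t\cap E=\emptyset$, so $E$ is an essential nonseparating disk in $H'$, and by \autoref*{one-arc-on-K} we may take $D$ to meet $\boundary N(t)$ in exactly two arcs. Since $D$ is disjoint from $E$, its boundary is determined by \autoref*{boundary-D-cap-P}: $\boundary D\cap P$ is a single nonseparating arc $\lambda$ in one of the three possible isotopy classes (connecting $\boundary_i P$ to itself is ruled out for the separating case, but here nonseparating means $\lambda$ joins two distinct $\boundary_i P$, $\boundary_j P$), and $\boundary D\cap\boundary E'\cap P=\emptyset$. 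The core $\delta$ of the annulus $\mathcal A$ obtained by $\boundary$--compressing $P$ along $D$ is, up to isotopy, the curve $\boundary_k P$ disjoint from $\lambda$, where $\{i,j,k\}=\{1,2,3\}$ — equivalently, it is the curve obtained by banding the two components $\boundary_i P$, $\boundary_j P$ of $\boundary P$ together along $\lambda$.

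First I would dispose of primitivity in $J$. The three candidates for $\delta$ are (up to isotopy in $\boundary J$) $\boundary_1 P$, $\boundary_2 P$, or $\boundary_3 P$, which are $\alpha$, $\beta$, $\gamma$ in the tangle description of $(J,P')$. Two of these are primitive in $J$ but one is not; more to the point, \autoref*{P-in-J} says $P'$ is $\boundary$--incompressible in $J$, so the $\boundary$--compression genuinely happens on the $H'$ side, and what we must check is that $\delta$ — viewed as a curve on $\boundary J$ via the identification of $P$ with $P'$ — is not primitive in $J$. Since $\delta$ is isotopic in $\boundary J$ to $\boundary_3 P=\gamma$ (this is forced: the banding of $\boundary_1 P$ and $\boundary_2 P$ along the arc $\lambda\subseteq P$ produces a curve isotopic in $\boundary H$, hence in $\boundary M$, hence in $\boundary J$, to $\boundary_3 P$), and $\gamma$ is not primitive in $J$ because $J[\gamma]=E(L)$ is the exterior of a nontrivial knot, we are done on the $J$ side. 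If instead the $\boundary$--compression produces two annuli, both cores are again isotopic into $\boundary P$ and the same analysis applies component-by-component.

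The main work — and the expected obstacle — is showing $\delta$ is not primitive in $H'$. Here I would use \autoref*{attaching-2-h}: in $H$, attaching a $2$--handle along $\boundary_3 P$ yields a solid torus in which $K$ meets a meridian disk algebraically $q-p$ times (and $p>1$, $q>p$, so $q-p\neq 0,\pm1$ in the relevant ranges, or more carefully $\gcd$ and size constraints give $|q-p|\ge 1$ with the degenerate primitive case excluded). The point is that $H'=\closure{H\setminus N(K\cup t)}$, so $H'[\delta]$ is obtained from $H[\boundary_3 P]$ by removing $N(K\cup t)$; if $\delta$ were primitive in $H'$, then $H'[\delta]$ would be a genus-two handlebody, and capping the tunnel would show $K$ sits in $H[\boundary_3 P]$ (a solid torus) with a genus-two handlebody complement of $K\cup t$ — i.e. $K$ would be $1$--bridge in that solid torus — while the algebraic intersection number $q-p$ together with \autoref*{not-isotopic-to-boundary} (or the Seifert-fibered structure from \autoref*{2-handle-dual-knot}) obstructs this. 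The delicate part is making the reduction "primitive in $H'$ $\Rightarrow$ $1$--bridge in the solid torus" precise: one must track that the $2$--handle along $\delta$ commutes with the tunnel neighborhood, which is where $D\cap E=\emptyset$ and \autoref*{one-arc-on-K} are used — the disk $E$ survives in $H'[\delta]$ as the nonseparating disk exhibiting the solid-torus structure, and $\boundary\delta$ meets $\boundary E$ trivially in $P$ so $\delta$ can be isotoped off $E$, giving a meridian disk of $H'[\delta]$ avoiding $\delta$ and hence a contradiction with the known intersection data. I expect the bookkeeping of exactly which component $\boundary_i P$ the arc $\lambda$ misses — and hence whether $\delta$ corresponds to $\boundary_1 P$, $\boundary_2 P$, or $\boundary_3 P$ — to require a short case analysis, but in every case the relevant attaching curve is non-primitive in $H'$ by the algebraic intersection numbers $p$, $q$, $q-p$ of \autoref*{attaching-2-h}, none of which is $0$ or $\pm 1$ under the hypothesis $q>p>1$.
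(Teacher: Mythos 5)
There are two genuine gaps here. First, on the $J$ side your argument hinges on the core of the resulting annulus being isotopic to $\boundary_3P=\gamma$, but you never establish this: you assume the arc $\lambda=\boundary D\cap P$ joins $\boundary_1P$ to $\boundary_2P$ and then correctly observe that banding those two components along $\lambda$ gives $\boundary_3P$. If instead $\lambda$ joined, say, $\boundary_1P$ to $\boundary_3P$, the core would be parallel to $\boundary_2P$, which \emph{is} primitive in $J$, and the conclusion would fail for that disk. Ruling this out is the real content of the first half of the paper's proof: since $D\cap E=\emptyset$ and, by \autoref*{one-arc-on-K}, $\boundary D$ runs over the tunnel in exactly two arcs, the arcs $\boundary D\cap P$ and $\boundary D\cap R$ must lie in regions of $P\setminus\boundary E$ and $R\setminus\boundary E$ meeting along $\boundary P$ in two distinct subarcs; attaching a rectangular region to anything would disconnect $\boundary E$, so both arcs lie in the hexagonal regions, and inspection of \autoref*{fig:PR} then forces $\boundary D\cap\boundary_3P=\emptyset$. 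Your proposal contains no substitute for this step.

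Second, the $H'$ side does not work as stated. Non-primitivity of $\boundary_3P$ in $H'$ is not a homological statement: the winding number $q-p$ of $K$ in the solid torus $S=H[\boundary_3P]$ does not obstruct $\closure{S\setminus N(K\cup t)}$ from being a handlebody, and in the paper's main application $q=p+1$, so $q-p=1$ anyway, contradicting your claim that none of $p$, $q$, $q-p$ is $\pm 1$. Likewise your fallback reduction --- that $\delta$ primitive in $H'$ would make $K$ $1$--bridge in $S$, contradicting \autoref*{not-isotopic-to-boundary} --- proves nothing, since $1$--bridge knots in solid tori need not be boundary-parallel (Berge's examples, cited in the introduction, are exactly of this kind). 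The paper instead extends $D$ across $N(t)$ to an annulus $A\subseteq\closure{H\setminus N(K)}$ with one boundary component on $\boundary H$ and the other on $\boundary N(K)$, applies \cite[Lemma 2.5.3]{CGLS} in $\closure{S\setminus N(K)}$ together with \autoref*{not-isotopic-to-boundary} to conclude that the latter component is meridional, caps it off to a meridian disk of $S$ meeting $K$ once, and deduces that $K$ is a connected sum of the core of $S$ with a nontrivial knot; hence $H'[\boundary_3P]=\closure{S\setminus N(K\cup t)}$ is a nontrivial knot exterior with a $1$--handle attached and so is not a handlebody. Some argument of this depth is needed; intersection numbers alone cannot supply it.
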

\begin{proof}
  By~\autoref*{one-arc-on-K}, the boundary of any
  $\boundary$--compressing disk $D$ disjoint from $E$ must contain a
  single arc in $P$ and two arcs connecting $\boundary P$ to the
  tunnel.  It follows that there is a region of $P$ whose boundary
  meets some region of $R$ in two distinct subarcs.

  If a rectangular region of $P$ is attached along $\boundary P$ to a
  region of $R$, we see that $\boundary E$ is disconnected.  The same
  holds if a rectangular region of $R$ is attached along $\boundary P$
  to a region of $P$.  Therefore $\boundary D\cap P$ and $\boundary
  D\cap R$ lie in the hexagonal regions, and it follows from
  examining~\autoref*{fig:PR} that $\boundary
  D\cap\boundary_3P=\emptyset$.

  By~\autoref*{one-arc-on-K}, the disk $D$ extends to an annulus $A$
  embedded in $\closure{H\setminus N(K)}$ with one boundary component,
  $\boundary_1A$, on $\boundary H$, and the other, $\boundary_2A$, on
  $\boundary N(K)$.  Furthermore, $\boundary_1A$ meets $P$ in an arc
  connecting $\boundary_1P$ and $\boundary_2P$.  Let $\alpha$ be the
  core of the annulus obtained by $\boundary$--compressing $P$ in
  $H'$, so that $\alpha$ is parallel to $\boundary_3P$.  This curve is
  not primitive in $J$ by construction.


  Attaching a $2$--handle to $H$ along $\alpha$ we obtain a solid
  torus $S$ containing a knot $K$.  The space $\closure{S\setminus
    N(K)}$ is irreducible by~\autoref*{attaching-2-h}.  By~\cite[Lemma
  2.5.3]{CGLS}, $\boundary_2A$ is either meridional on $\boundary
  N(K)$ or meets a meridian of $\boundary N(K)$ exactly once.  The
  latter case is impossible since $K$ is not isotopic to $\boundary S$
  by~\autoref*{not-isotopic-to-boundary}.  Therefore $A$ extends to a
  meridian disk $D'$ of $S$ meeting $K$ exactly once, which shows that
  $K$ is the connect sum of the core curve of $S$ with a nontrivial
  knot.  Therefore the exterior of $K\cup t$ in $S$ is homeomorphic to
  a nontrivial knot exterior with a $1$--handle attached, and so
  $\alpha$ is not primitive in $H'$.
\end{proof}

\subsection{\texorpdfstring{Boundary compressing disks meeting
    $E$}{Boundary compressing disks meeting E}}
In this section, let $D$ be a $\boundary$--compressing disk for $P$ in
$H'$ which minimizes $(|D\cap E|, |D\cap\boundary N(t)|, |\boundary
D\cap \boundary E\cap P|)$ and suppose that $|D\cap E|>0$.  We may
assume that every component of $D\cap E$ is an arc.  There are several
types of ``forbidden'' arcs in $D$ and $E$ which we will use to argue
about the existence of $\boundary$--compressing disks for $P$ in $H'$:

\begin{lemma}\label{arcs-in-E}
  Let $\alpha$ be a component of $D\cap E$.  Then either
  \begin{itemize}
  \item $\alpha$ connects two components of $\boundary E\setminus P$
    and separates $E$ into two subdisks each containing at least two
    components of $\boundary E\cap P$ on their boundary, or
  \item $\alpha$ connects a component of $\boundary E\cap P$ with a
    nonadjacent component of $\boundary E\setminus P$.
  \end{itemize}
\end{lemma}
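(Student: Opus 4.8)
The plan is to classify the components of $D\cap E$ by the innermost-disk/outermost-arc method of \autoref*{t-cap-E-is-empty}, excluding any component that fails both listed conclusions. Regard a component $\alpha$ of $D\cap E$ as an arc properly embedded in the disk $E$; its two endpoints lie on $\boundary D\cap\boundary E$. By \autoref*{minimum-of-E-cap-P}, $\boundary P$ cuts $\boundary E$ into $2(p+q-1)$ subarcs, alternating between the $p+q-1$ arcs of $E\cap P$ --- each running between two distinct components of $\boundary P$ and hence nonseparating in $P$ --- and the $p+q-1$ arcs of $\boundary E\cap R$; call these the $P$-arcs and $R$-arcs of $\boundary E$. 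An endpoint of $\alpha$ on a $P$-arc lies on $a:=\boundary D\cap P$, and an endpoint on an $R$-arc lies on $\boundary D\setminus a$.

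The minimality of $(|D\cap E|,|D\cap\boundary N(t)|,|\boundary D\cap\boundary E\cap P|)$ gives a first reduction: if a component of $D\cap E$ cuts off a subdisk $\delta$ of $E$ with $\interior{\delta}\cap D=\emptyset$ and with $\delta\cap\boundary E$ contained in a single $P$-arc, in a single $R$-arc, or in a corner (a subarc of a $P$-arc together with a subarc of an adjacent $R$-arc meeting it at a vertex of $\boundary P$), then pushing $D$ across $\delta$ is an isotopy that keeps $D$ a $\boundary$-compressing disk for $P$, does not increase $|D\cap\boundary N(t)|$, and strictly decreases $|D\cap E|$, a contradiction. Passing to innermost such subdisks, this already rules out every component of $D\cap E$ joining a $P$-arc to itself, an $R$-arc to itself, or a $P$-arc to an \emph{adjacent} $R$-arc. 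Since \autoref*{boundary-D-cap-P} forces $a$ to miss every $P$-arc of $\boundary E$ when $a$ is nonseparating in $P$, the configurations that remain to be excluded are: \textbf{(A)} $\alpha$ joins two distinct $P$-arcs of $\boundary E$; and \textbf{(B)} $\alpha$ joins two distinct $R$-arcs of $\boundary E$ with exactly one $P$-arc on one of its two sides in $E$. In every other case $\alpha$ is of one of the two listed types.

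For (A) and (B) I would, as in \autoref*{t-cap-E-is-empty}, surger $E$ along a well-chosen outermost half-disk $D_1\subseteq D$ and feed the resulting disks to \autoref*{E-is-unique}, \autoref*{minimum-of-E-cap-P}, \autoref*{min-separating-disks}, and \autoref*{boundary-compressing-disks-meet-K}. In case (A), a nesting argument among the components of $D\cap E$ with both endpoints on $a$ produces one, $\alpha$, whose endpoints are \emph{consecutive} crossings of $a$ with the $P$-arcs of $\boundary E$; the half-disk $D_1$ it bounds with the intervening subarc of $a$ then satisfies $\interior{D_1}\cap E=\emptyset$ and $D_1\cap E=\alpha$, so that $\widehat E_1:=E_1\cup D_1$ and $\widehat E_2:=E_2\cup D_1$ are properly embedded disks in $H$ disjoint from $K$. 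A local arc count gives $|\widehat E_1\cap P|+|\widehat E_2\cap P|=p+q-1$ with each summand at least $1$; but one of them, say $\widehat E_1$, is nonseparating, hence essential, hence isotopic to $E$ by \autoref*{E-is-unique}, so $|\widehat E_1\cap P|\ge p+q-1$ by \autoref*{minimum-of-E-cap-P} (and \autoref*{min-separating-disks} only sharpens this if $\widehat E_2$ is essential), a contradiction. In case (B), the same kind of nesting argument produces $\alpha$ bounding a half-disk $D_1$ with $\boundary D_1\setminus\alpha$ lying in $R$; taking $\widehat E_1=E_1\cup D_1$ on the side carrying the lone $P$-arc $x$, we get $\boundary\widehat E_1\cap P=x$, and since $x$ runs between two distinct components of $\boundary P$, $\boundary\widehat E_1$ is essential in $\boundary H$. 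Thus $\widehat E_1$ is a $\boundary$-compressing disk for $P$ in $H$ disjoint from $K$, contradicting \autoref*{boundary-compressing-disks-meet-K}.

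The main difficulty --- and the reason this requires genuine work rather than a short paragraph --- is the same one behind the length of \autoref*{t-cap-E-is-empty}: to run any of these surgeries one first needs the relevant half-disk $D_1$ to meet $\boundary H$ in a single arc, whereas a priori $\boundary D$ may run along $\boundary N(t)$ (or $\boundary N(K)$). So the surgery steps above must be preceded by the analysis of \autoref*{t-cap-E-is-empty}, working through the positions of the endpoints of the arcs of $D\cap E$ relative to $\boundary N(t)\cap\boundary D$ and, in each case, either removing the offending intersections (reducing $|t\cap E|$ or $|D\cap\boundary N(t)|$) or producing a disk that contradicts \autoref*{K-atoroidal}, \autoref*{E-is-unique}, or \autoref*{boundary-compressing-disks-meet-K}. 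Once $D$ is positioned so that $D_1$ meets $\boundary H$ in one arc, the arguments above apply and finish the proof.
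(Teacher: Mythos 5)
Your classification of the arc types and your treatment of case (A) follow the paper's proof closely: the paper likewise disposes of the same-component and corner configurations by surgering $D$ along an outermost subdisk of $E$, and handles arcs joining two distinct $P$-arcs by surgering $E$ along an outermost subdisk of $D$ cut off by a subarc of $a=\boundary D\cap P$, then invoking \autoref*{E-is-unique} and \autoref*{minimum-of-E-cap-P}. (Two small points there: ``pushing $D$ across $\delta$'' is really surgery followed by discarding one piece, and in the corner case the claim that the result is still a $\boundary$--compressing disk is not free --- the paper justifies it by noting that \autoref*{boundary-D-cap-P} forces $\boundary D$ to separate $P$ in that situation, so that rerouting $a$ along part of a nonseparating arc of $\boundary E\cap P$ produces an arc joining two distinct components of $\boundary P$, hence essential.)

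The genuine gap is in case (B) and in your closing paragraph. You surger $E$ along a half-disk $D_1\subseteq D$, and therefore need an arc $\alpha$ that is simultaneously innermost in $E$ around the lone $P$--arc $x$ and outermost in $D$ with $\boundary D_1\setminus\alpha$ a single subarc of $\boundary D$ lying in $R$. No nesting argument produces such an arc: the arcs of $D\cap E$ cutting off $x$ are nested in $E$, but the innermost one may have its endpoints far apart on $\boundary D$, separated by arcs of $\boundary D\cap\boundary N(K\cup t)$ and by other intersections with $E$. Your proposed remedy --- repositioning as in \autoref*{t-cap-E-is-empty} until the half-disk meets $\boundary H$ in one arc --- cannot succeed, because $\boundary D$ must meet $\boundary N(t)$ (otherwise $D$ already contradicts \autoref*{boundary-compressing-disks-meet-K}), and the point of the rest of Section 8 is precisely that these intersections persist and must be analyzed rather than removed. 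The paper avoids the problem by surgering in the other direction: take $\alpha$ outermost in $E$ cutting off $E'\supseteq x$ with $\interior{E'}\cap D=\emptyset$; since both endpoints of $\alpha$ lie on $\boundary D\setminus P$, the arc $a$ lies entirely in one of the two pieces of $D$ cut off by $\alpha$, and the other piece union $E'$ is a properly embedded disk meeting $P$ in the single nonseparating (hence essential) arc $x$ and meeting $E$ strictly fewer times than $D$ did. This contradicts the minimality of $|D\cap E|$ directly, and it requires no control over $\boundary D\cap\boundary N(K\cup t)$ because $E'\cap\boundary E$ lies in $\boundary E\subseteq\boundary H$, away from $N(K\cup t)$.
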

\begin{proof}
  Let $\alpha$ be an arc of intersection of $D\cap E$ and suppose that
  $\alpha$ either has both endpoints in the same component of
  $\boundary E\setminus P$ or both endpoints in the same component of
  $\boundary E\cap P$.  We may assume $\alpha$ is an outermost such
  arc in $E$ which cuts off a disk $E'$ whose interior does not meet
  $D$.  Then we may surger $D$ along $E'$ to obtain a new
  $\boundary$--compressing disk meeting $E$ fewer times.


  Suppose that $\alpha$ has each endpoint in a different component of
  $\boundary E\cap P$.  Among all arcs in $D$ with endpoints on
  $\boundary D\cap P$ choose an outermost one $\alpha'$ so that
  $\alpha'$ cuts off a subdisk $D'$ of $D$ such that the interior of
  $D'$ does not meet $E$.  We may assume that $\alpha'$ does not have
  both endpoints in the same component of $\boundary E\cap P$ in $E$
  since if it did, we could find a simpler $\boundary$--compressing
  disk as in the previous paragraph.  Surgering $E$ along $D'$ we
  obtain a nonseparating disk which must be isotopic to $E$
  by~\autoref*{E-is-unique}.  However, this disk
  contradicts~\autoref*{minimum-of-E-cap-P}.

  Suppose then that $\alpha$ has one endpoint in a component of
  $\boundary E\cap P$ and the other in an adjacent component of
  $\boundary E\setminus P$.  We may assume that $\alpha$ is outermost
  in $E$ and cuts off a subdisk $E'$ of $E$ whose interior does not
  meet $D$.  By~\autoref*{boundary-D-cap-P}, $\boundary D$ separates
  $P$.  We may surger $D$ along $E'$ to obtain a new disk $D'$
  which meets $P$ in a single arc.  Since $\boundary D$ separates $P$
  and no arc of $\boundary E\cap P$ is separating, $\boundary D'\cap
  P$ is essential in $P$.  Therefore $D'$ is a new
  $\boundary$--compressing disk which meets $E$ fewer times.

  Finally, suppose that $\alpha$ has both endpoints on $\boundary
  E\setminus P$ and cuts off a disk $E'$ containing exactly one arc of
  $\boundary E\cap P$.  We may assume that $\alpha$ is outermost in
  $E$ so that it cuts off a subdisk $E'$ whose interior is disjoint
  from $D$.  As above, we can surger $D$ along $E'$ to obtain a new
  $\boundary$--compressing disk which has fewer intersections with $E$.
\end{proof}

Recall that the arc $\epsilon$ is defined as the largest subarc of
$\boundary D$ lying in $\boundary H$ and meeting $P$.  Let $\Delta =
\boundary D\setminus (\epsilon\cup\boundary N(K\cup t))$.  

\begin{lemma}\label{no-arcs-connect-epsilon-to-itself}
  There are no arcs of $D\cap E$ which have both endpoints in
  $\epsilon$.  There are no arcs of $D\cap E$ which have both
  endpoints in the same component of $\Delta$.
\end{lemma}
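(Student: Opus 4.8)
Both statements are proved by the same outermost-arc surgery on $E$, exploiting the rigidity of $E$ given by \autoref*{E-is-unique} and \autoref*{minimum-of-E-cap-P} together with the structural description of $D\cap E$ in \autoref*{arcs-in-E}. After discarding circles of $D\cap E$, suppose that some arc $\alpha$ of $D\cap E$ has both endpoints in $\epsilon$, or both endpoints in a single component $\delta$ of $\Delta$. Choose $\alpha$ innermost among such arcs: it cuts off from $D$ a subdisk $D'$ with $D'\cap\boundary D$ a subarc $\mu$ of $\epsilon$ (respectively of $\delta$), and since any arc of $D\cap E$ inside $D'$ again has both endpoints in $\epsilon$ (respectively $\delta$), minimality forces $\interior{D'}\cap E=\emptyset$, so $\boundary D'=\alpha\cup\mu$ with $\alpha$ properly embedded in $E$. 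Now $\delta$ lies in $R=\boundary H\setminus P$, while $\epsilon$ is the concatenation $\rho\cup\lambda\cup\rho'$ of the single $P$--arc $\lambda=\boundary D\cap P$ with two arcs $\rho,\rho'$ of $\boundary D$ in $R$; hence $\mu$ meets $P$ in at most one subarc of $\lambda$. Surger $E$ along $D'$ to obtain two disks $E_1,E_2$ properly embedded in $\closure{H\setminus N(K)}$ (they avoid $K$ since $D'\subseteq D\subseteq H'$ does).

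\textbf{Main step.} By \autoref*{arcs-in-E}, $\alpha$ either joins two components of $\boundary E\setminus P$ and splits $E$ into two pieces each meeting $P$ in at least two arcs, or joins a component of $\boundary E\cap P$ to a nonadjacent component of $\boundary E\setminus P$ — and in the latter case nonadjacency forces at least one full arc of $\boundary E\cap P$ onto each side of $\alpha$. Carrying the arcs of $\boundary E\cap P$ through the surgery, and accounting for the single extra arc with $P$ that a subarc of $\lambda$ lying inside $\mu$ can contribute to each of $\boundary E_1$ and $\boundary E_2$, one finds in every case that $|E_1\cap P|$ and $|E_2\cap P|$ are each at least $2$, that $|E_1\cap P|+|E_2\cap P|\le p+q+1$, and that both are at least $3$ whenever that sum attains $p+q+1$. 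Since $q>p>1$ forces $p+q\ge 5$, whichever of $E_1,E_2$ is nonseparating then meets $P$ in at most $p+q-2$ arcs.

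\textbf{Conclusion.} One of $E_1,E_2$ is nonseparating because $E$ is; a nonseparating disk in a handlebody is essential, so by \autoref*{E-is-unique} it is isotopic to $E$, and hence by \autoref*{minimum-of-E-cap-P} it meets $P$ in at least $p+q-1$ arcs. This contradicts the bound $p+q-2$ of the main step, so no arc of $D\cap E$ has both endpoints in $\epsilon$, nor both endpoints in one component of $\Delta$. I expect the delicate point to be the bookkeeping in the main step — verifying that a traversal of $\lambda$ by $\mu$ adds at most one arc to each of $\boundary E_1\cap P$ and $\boundary E_2\cap P$ — which rests on \autoref*{boundary-D-cap-P}, controlling where $\boundary D$ meets $\boundary E$ inside $P$, together with the explicit description of $\boundary E$ in \autoref*{fig:PR} and \autoref*{P-R-remarks}. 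Everything else is the innermost/outermost surgery routine already carried out several times above.
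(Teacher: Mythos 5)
Your proposal is correct and follows essentially the same route as the paper: take an outermost offending arc of $D\cap E$, surger $E$ along the cut-off subdisk of $D$, and derive a contradiction between \autoref*{E-is-unique} and \autoref*{minimum-of-E-cap-P} using the classification of arcs in \autoref*{arcs-in-E}. Your bookkeeping of how the subarc of $\epsilon$ (or of $\delta$) can add at most one arc of intersection with $P$ to each surgered disk is in fact more explicit than the paper's one-line assertion that the resulting disk meets $P$ fewer than $p+q-1$ times, and the case counts you give check out.
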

\begin{proof}
  There are no arcs with both endpoints in $\epsilon\cap P$
  by~\autoref*{arcs-in-E}.  Let $\alpha$ be a component of $D\cap E$
  which has both endpoints in $\epsilon$.  We may assume that $\alpha$
  is outermost in $D$ and cuts off a subdisk $D'$ with $\boundary
  D'=\alpha\cup\beta$, $\beta\subseteq\epsilon$.  

  If $\alpha$ has one endpoint in $\epsilon\cap P$ and the other in
  $\epsilon\cap R$, we may surger $E$ along $D'$ to obtain a
  nonseparating disk which must be isotopic to $E$
  by~\autoref*{E-is-unique}.  By~\autoref*{arcs-in-E} this disk meets
  $P$ fewer than $p+q-1$ times, and so it
  contradicts~\autoref*{minimum-of-E-cap-P}.  The same conclusion
  holds if $\alpha$ has both endpoints in $\epsilon\cap R$.

  If $\alpha$ has both endpoints in a component $\delta$ of $\Delta$,
  we may assume that $\alpha$ is outermost in $D$ and cuts off a
  subdisk $D'$ with $\boundary D'=\alpha\cup\beta$,
  $\beta\subseteq\delta$.  Surgering $E$ along $D'$ gives a disk
  violating~\autoref*{minimum-of-E-cap-P} as above.
\end{proof}

Note that~\autoref*{no-arcs-connect-epsilon-to-itself} shows that
$\Delta$ is nonempty.  Call two distinct components $\delta_1$ and
$\delta_2$ of $\Delta\cup\set{\epsilon}$ \defn{adjacent} if there is a
component $\phi$ of $\boundary D\cap \boundary N(K\cup t)$ such that
$\phi$ shares one endpoint with $\delta_1$ and the other with
$\delta_2$.  Orient $\boundary D$ so that each component of $\Delta$
inherits an orientation.  The \defn{label sequence} of a component
$\delta$ of $\Delta$ is the ordered sequence of arcs $\boundary E\cap
R$ that $\delta$ meets.  The \defn{reverse} of a label sequence $x_1,
x_2, \dots, x_n$ is $x_n, x_{n-1},\dots, x_1$.  

Let $\delta$ be a component of $\Delta$.  Then $\delta$ extends to a
\scc $\delta'$ in $R$ which is parallel to a component $\gamma$ of
$\boundary P$ by minimality of $(|D\cap E|, |\boundary D\cap\boundary
N(t)|)$.  By abuse of notation, we say that $\delta$ is parallel to
$\gamma$.

\begin{lemma}\label{label-sequence}
  All components of $\Delta$ are parallel on $R$, and no component of
  $\Delta$ is parallel to $\boundary_3P$.  
\end{lemma}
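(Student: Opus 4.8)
The plan is to analyze the arcs of $D\cap E$ using the restrictions established in \autoref*{arcs-in-E} and \autoref*{no-arcs-connect-epsilon-to-itself}, together with the adjacency structure of components of $\Delta\cup\set{\epsilon}$ recorded in the label sequences. First I would observe that every arc of $D\cap E$ must connect two \emph{distinct} components of $\Delta\cup\set{\epsilon}$, since by \autoref*{no-arcs-connect-epsilon-to-itself} none can have both endpoints in the same component of $\Delta$ or both in $\epsilon$. Using \autoref*{arcs-in-E}, which constrains which components of $\boundary E\setminus P$ an arc of $D\cap E$ can run between (either joining two components of $\boundary E\setminus P$ and separating off at least two arcs of $\boundary E\cap P$ on each side, or joining a component of $\boundary E\cap P$ to a nonadjacent component of $\boundary E\setminus P$), I would read off which arcs $b_i^R$ and $d_i^R$ in $R$ (see \autoref*{fig:PR}) can appear consecutively in the label sequence of a single component $\delta$ of $\Delta$.

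Next I would use the fact that each $\delta$ extends to a simple closed curve $\delta'$ in $R$ parallel to some component $\gamma$ of $\boundary P$. Since $R$ is a $3$--punctured sphere, $\delta'$ is parallel to exactly one of $\boundary_1P$, $\boundary_2P$, $\boundary_3P$, and this curve separates $R$ so as to partition the arcs $\boundary E\cap R$ into two groups. The label sequence of $\delta$ must therefore be compatible with crossing the arcs of $\boundary E\cap R$ in the cyclic order dictated by this parallelism. Comparing the possible label sequences for components parallel to different $\boundary_iP$, and using that distinct components of $\Delta$ are connected to one another (and to $\epsilon$) by arcs of $\boundary D\cap\boundary N(K\cup t)$ and by arcs of $D\cap E$ subject to \autoref*{arcs-in-E}, I would show that two components parallel to different boundary curves cannot be joined consistently — one of the arcs of $D\cap E$ between them would be forced to violate \autoref*{arcs-in-E}, or surgering $E$ along an outermost such arc would produce a nonseparating disk contradicting \autoref*{E-is-unique} and \autoref*{minimum-of-E-cap-P}. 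This gives the first assertion, that all components of $\Delta$ are parallel on $R$.

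For the second assertion, I would suppose some component $\delta$ of $\Delta$ is parallel to $\boundary_3P$, hence so are all of them. By \autoref*{boundary-D-cap-P}, the number $|\boundary D\cap\boundary E\cap P|$ is pinned down by which $\boundary_iP$ the arc $\boundary D\cap P$ meets; combined with the fact that $\delta'\parallel\boundary_3P$ bounds a subsurface of $R$ meeting $\boundary E$ in a controlled way, I would count the arcs of $D\cap E$ incident to $\delta$ and compare with what \autoref*{arcs-in-E} permits, deriving a contradiction — essentially, a $\delta$ parallel to $\boundary_3P$ would have to cross the single arc $c^R$ an inconsistent number of times, or else an outermost arc of $D\cap E$ on $\delta$ would again yield a forbidden surgery on $E$. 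The main obstacle I anticipate is the bookkeeping in the first step: carefully enumerating the admissible label sequences (especially near the hexagonal regions $H_1^R$, $H_2^R$) and verifying that gluing two components parallel to different $\boundary_iP$ really is impossible in \emph{every} combinatorial configuration, rather than just the generic one. This is where \autoref*{fig:PR} and \autoref*{P-R-remarks} would be used most heavily to rule out cases by hand.
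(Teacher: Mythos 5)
Your plan assembles the right ingredients (the parallelism of each component of $\Delta$ to a component of $\boundary P$, the resulting partition of the arcs of $\boundary E\cap R$ into those crossed and those missed, and \autoref*{arcs-in-E} as the source of the contradiction), but it misses the one mechanism that actually closes both halves of the lemma, and the substitutes you propose do not work. The paper's argument is a coverage count, not a case analysis of label sequences. A component of $\Delta$ parallel to $\boundary_iP$ meets exactly those arcs of $\boundary E\cap R$ having an endpoint on $\boundary_iP$; hence two components with genuinely different label sequences are parallel to different components of $\boundary P$ and together meet \emph{every} arc of $\boundary E\cap R$, so every component of $\boundary E\setminus P$ carries an endpoint of an arc of $D\cap E$. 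On the other hand, an arc of $D\cap E$ outermost in $E$ cuts off a subdisk of $E$ whose interior is disjoint from $D$, and by \autoref*{arcs-in-E} (either allowed type of arc) the boundary of that subdisk contains at least one \emph{entire} component of $\boundary E\setminus P$, which therefore carries no endpoint of $D\cap E$. That is the contradiction for the first assertion. For the second, a component parallel to $\boundary_3P$ meets every arc of $\boundary E\cap R$ except $c^R$, so $c^R$ is the only component permitted to be endpoint-free; but there are at least \emph{two} outermost arcs of $D\cap E$ in $E$, cutting off disjoint subdisks and hence producing two distinct endpoint-free components.

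The routes you sketch in place of this would stall or fail. For the first assertion, enumerating which labels may appear consecutively does not help: the label sequence of a $\boundary_iP$-parallel component is already forced (each relevant arc of $\boundary E\cap R$ crossed once, in order), the obstruction is global rather than local, and adjacency of components of $\Delta$ plays no role in this lemma (it enters only later, in \autoref*{adjacent-cpts-of-delta} and \autoref*{SC}); the surgery-on-$E$ alternative you offer is the mechanism inside the proof of \autoref*{arcs-in-E} itself, not what is needed here. For the second assertion, your proposed contradiction --- that a $\boundary_3P$-parallel $\delta$ "would have to cross the single arc $c^R$ an inconsistent number of times" --- cannot work, because such a curve is disjoint from $c^R$; that is precisely why the pigeonhole with two outermost arcs and a single permissible endpoint-free arc is required.
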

\begin{proof}
  Suppose that there are two components $\delta_1$ and $\delta_2$
  whose label sequences are different and not the reverse of one
  another.  By minimality of $|D\cap E|$, $\delta_1$ and $\delta_2$
  must be parallel to different components of $\boundary P$.
  Therefore $\boundary D$ meets every component of $\boundary E\cap
  R$.  But an arc of $D\cap E$, outermost in $E$, must then
  violate~\autoref*{arcs-in-E}.

  If there is a component of $\Delta$ which is parallel to
  $\boundary_3P$, then every arc of $\boundary E\cap R$ except for $c$
  contains the endpoint of an arc of $D\cap E$.  Since there are at
  least two outermost arcs of $D\cap E$ in $E$, there is at least one
  which violates~\autoref*{arcs-in-E}.
\end{proof}

Let $D'$ be a $2n$--gon, $n>1$, in $D$ whose boundary consists of a
union of components of $\boundary D\setminus\boundary E$ and
components of $D\cap E$ and whose interior is disjoint from $E$.  At
each vertex of $D'$ we see a label $a^P$, $b_i^W$, $c^R$, or $d_l^W$,
where $W=R$ or $P$.  Suppose that the vertex set of $D'$ contains
exactly two labels $x$ and $y$, and suppose further that as we go
around $\boundary D'$ these labels alternate.  We call $\boundary D'$
a \SC.  An example is shown in~\autoref*{fig:SC}. The bold arcs
represent $\boundary D\cap\boundary N(K\cup t)$ and the shaded region
is $D'$.  Note that since the vertex set of $D'$ contains exactly two
labels and there are no arcs of $D\cap E$ with both endpoints in
$\boundary D\cap P$, we have $\boundary D'\cap P=\emptyset$.

\begin{figure}[h!tb]
  \begin{center}
    \def\svgwidth{0.3\textwidth}
    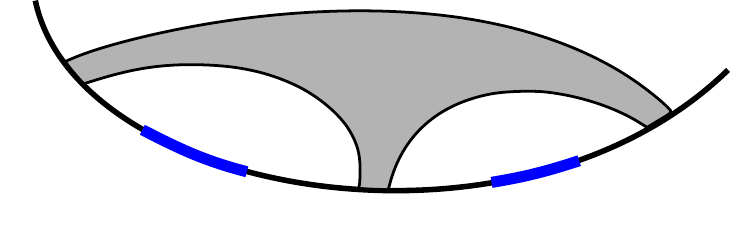
  \end{center}
  \caption{A \SC appearing in $D$}
  \label{fig:SC}
\end{figure}

Note that the arcs of $\boundary D'\cap E$ are parallel in $E$, and
the arcs of $\boundary D'\cap\boundary D$ are parallel in $R$.  Let
$A_E$ be a subdisk of $E$ whose boundary consists of two subarcs of
$\boundary E$ and two components of $\boundary D'\cap E$ such that
$A_E$ contains all components of $\boundary D\cap E$.  Similarly, let
$A_R$ be a subdisk of $R$ whose boundary consists of two subarcs of
$\boundary E$ and two components of $\boundary D'\cap \boundary D$
such that $A_R$ contains all components of $\boundary D'\cap\boundary
D$.  Then $S=N(A_E\cup A_R)$ is a solid torus in $H'$.  Since $n>1$,
$N(S\cup D')$ is a punctured lens space.  (We can see this by noting
that $N(S\cup D')$ is obtained by attaching the $2$--handle $N(D')$ to
a solid torus so that the attaching curve meets a meridian
algebraically more than once.)

In particular, $H'$ is reducible, which is impossible since we are
assuming that $H'$ is a handlebody.  Therefore such disks $D'$ do not
exist in $D$.  We can use this criterion to show:

\begin{lemma}\label{adjacent-cpts-of-delta}
 Let $\delta_1$ and $\delta_2$ be adjacent components of $\Delta$,
  and suppose that $\delta_1$ has label sequence $x_1,\dots,x_n$.
  Then there are at most $\floor{n/2}$ arcs of $D\cap E$ with one
  endpoint in $\delta_1$ and the other in $\delta_2$.
\end{lemma}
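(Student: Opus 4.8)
The plan is to argue by contradiction. Suppose that more than $\floor{n/2}$ arcs of $D\cap E$ join $\delta_1$ to $\delta_2$; write them $\beta_1,\dots,\beta_k$ with $k>\floor{n/2}$, ordered by distance along $\boundary D$ from the arc $\phi$ of $\boundary D\cap\boundary N(K\cup t)$ witnessing that $\delta_1$ and $\delta_2$ are adjacent. The goal is to produce a \SC in $D$, which contradicts the analysis carried out just above: a \SC in $D$ makes $H'$ reducible, impossible since $H'$ is a handlebody.

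The first step is to locate the feet of the $\beta_i$ on $\delta_1$ and $\delta_2$. The only boundary available to arcs of $D\cap E$ is $\epsilon\cup\Delta$, since $E\cap K=\emptyset$ by construction and $E\cap t=\emptyset$ by~\autoref*{t-cap-E-is-empty}; arcs of $D\cap E$ are mutually disjoint; and no arc of $D\cap E$ has both endpoints on $\delta_1$, nor both on $\delta_2$, by~\autoref*{no-arcs-connect-epsilon-to-itself}. Hence a crossing of $\delta_1$ lying between the feet of $\beta_i$ and $\beta_{i+1}$ would emit an arc of $D\cap E$ trapped in the region of $D$ between $\beta_i$ and $\beta_{i+1}$, forcing that arc either to return to $\delta_1$ or to be a further arc from $\delta_1$ to $\delta_2$ between them; both are impossible. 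Therefore $\beta_1,\dots,\beta_k$ leave $k$ consecutive crossings of $\delta_1$ and $k$ consecutive crossings of $\delta_2$, and each consecutive pair $\beta_i,\beta_{i+1}$ cobounds a subdisk $D_i'$ of $D$ (a $2n$--gon with $n=2$) whose interior misses $E$; \autoref*{arcs-in-E} and~\autoref*{no-arcs-connect-epsilon-to-itself} ensure that no other arc of $D\cap E$ intrudes into $D_i'$.

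Next I would read off the four corner labels of $D_i'$. By~\autoref*{label-sequence} the curves $\delta_1$ and $\delta_2$ are parallel in $R$ to a common component of $\boundary P$, which is $\boundary_1 P$ or $\boundary_2 P$; a simple closed curve parallel to $\boundary_i P$ meets precisely the arcs of $\boundary E\cap R$ incident to $\boundary_i P$, and by minimality of the complexity tuple it meets each of them once. Thus the label sequences of $\delta_1$ and $\delta_2$ are cyclic rearrangements of one common set of $n$ distinct labels, with $n=p$ or $n=q$, related by a reflection. Following the feet of $\beta_i$ across $\phi$, the cyclic list of corner labels of $D_i'$ is a pair of neighbours $Z_j,Z_{j+1}$ of this sequence coming from $\delta_1$ together with a pair of neighbours coming from $\delta_2$, the latter in the order forced by the orientation of $\boundary D$; the index controlling the $\delta_2$ pair is an affine function of $i$. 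The disk $D_i'$ is a \SC exactly when these four labels take only two values, alternating around $\boundary D_i'$, which amounts to a single congruence $2i\equiv c\pmod n$ for a constant $c$ determined by the position of $\phi$ and the reflection above. Here the constraint of~\autoref*{arcs-in-E} — that $\beta_i$ separates $E$ into two disks each meeting at least two arcs of $\boundary E\cap P$ — restricts which label pairs can occur and hence pins down the parity of $c$.

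Finally, since $k>\floor{n/2}$ there are at least $\floor{n/2}$ of the bigons $D_1',\dots,D_{k-1}'$, so as $i$ ranges over $1,\dots,k-1$ the residue $2i\bmod n$ takes at least $\floor{n/2}$ values; combined with the parity restriction on $c$ this is enough to hit $c$, producing a \SC and the desired contradiction. I expect the genuine difficulty to be exactly this closing bookkeeping: fixing the orientations of $\delta_1,\delta_2$ and the position of $\phi$ relative to the label sequences precisely enough that the count remains tight in the boundary case $k=\floor{n/2}+1$, and confirming that the intrusion of stray arcs of $D\cap E$ into the $D_i'$ is genuinely ruled out by~\autoref*{arcs-in-E} and~\autoref*{no-arcs-connect-epsilon-to-itself}.
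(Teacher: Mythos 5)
Your setup is the right one and coincides with the paper's: the arcs joining $\delta_1$ to $\delta_2$ are nested around $\phi$ and consecutive on both components, consecutive pairs cobound rectangles in $D$ whose interiors miss $E$, and the contradiction is to come either from a \SC or from \autoref*{arcs-in-E}. The gap is in the closing count, exactly where you flagged it. First, the reflection case defeats the plan as stated: if the parallelism between $\delta_1$ and $\delta_2$ reverses the orientations induced by $\boundary D$, the alternation condition for $D_i'$ is not of the form $2i\equiv c\pmod n$ (the coefficient of $i$ cancels), and in fact no $D_i'$ is ever a \SC; the contradiction there must instead be that every $\beta_i$ has both endpoints on the same component of $\boundary E\cap R$, violating \autoref*{arcs-in-E} outright. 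Second, even in the translation case the count does not close: for $i\le\floor{n/2}$ the residues $2i\bmod n$ cover only about half of $\Z_n$, and \autoref*{arcs-in-E} does not ``pin down the parity of $c$.'' The correct dichotomy is that either $2i\equiv c$ is solvable in range (giving the \SC) or the off-by-one congruence $2i\equiv c+1$ is (giving a $\beta_i$ with both endpoints on one component of $\boundary E\cap R$, again a direct violation of \autoref*{arcs-in-E}). You have this second exit available but have miscast it as a constraint on $c$ rather than as an independent contradiction, so the boundary case is genuinely open in your write-up. A further simplification you missed: no cyclic shift can occur at all, since every component of $\Delta$ has both endpoints on the boundary of the tunnel's foot, which lies in a single region of $R\setminus\boundary E$; parallel components therefore have label sequences that are literally equal or literally reversed.

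With that observation the paper's argument closes in three lines. The $k$-th nested arc joins the label $x_{n-k+1}$ of $\delta_1$ to the $k$-th label of $\delta_2$. If the sequences are reversed, every arc has equal endpoint labels; if they are equal and $n$ is odd, the arc $k=(n+1)/2$ does; both contradict \autoref*{arcs-in-E}. If they are equal and $n$ is even, the arcs $k=n/2$ and $k=n/2+1$ cobound a \SC with label set $\set{x_{n/2},x_{n/2+1}}$. Each case needs at most $\floor{n/2}+1$ arcs, which is exactly what the hypothesis supplies, so the bound is tight and no modular bookkeeping is required.
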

\begin{proof}
  Suppose not.  Then there are two adjacent components $\delta_1$ and
  $\delta_2$ of $\Delta$ with more than $\floor{n/2}$ arcs of $D\cap
  E$ connecting them.  By~\autoref*{no-arcs-connect-epsilon-to-itself}
  there are no arcs with both endpoints in $\delta_i$, $i=1,2$, and
  therefore these arcs are all parallel in $D$.
  By~\autoref*{label-sequence}, $\delta_2$ has the label sequence of
  $\delta_1$ or its reverse.

  If $\delta_2$ has the reverse label sequence of $\delta_1$, then
  there is an arc both of whose endpoints have the same label.  This
  contradicts~\autoref*{arcs-in-E}.

  So suppose that $\delta_2$ has the same label sequence as
  $\delta_1$.  If $n$ is odd, there is an arc both of whose endpoints
  have label $x_{(n+1)/2}$, and this arc
  violates~\autoref*{arcs-in-E}.  On the other hand, if $n$ is even,
  then there is a Scharlemann cycle with label set $\set{x_{n/2},
    x_{n/2+1}}$.  
\end{proof}

\begin{lemma}\label{SC}
  There are no arcs of $D\cap E$ with endpoints in
  nonadjacent components of $\Delta$.
\end{lemma}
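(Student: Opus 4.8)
The plan is to argue by contradiction and, as in the discussion of Scharlemann cycles preceding the lemma, to produce either a disk reducing the minimal triple $(|D\cap E|,|D\cap\boundary N(t)|,|\boundary D\cap\boundary E\cap P|)$ or a punctured lens space embedded in $H'$, the latter contradicting the fact that $H'$ is a handlebody. So suppose some arc $\alpha$ of $D\cap E$ joins two nonadjacent components $\delta_1,\delta_2$ of $\Delta$. Among all such arcs, and all choices of one of the two subdisks of $D$ that such an arc cuts off, choose $\alpha$ together with a subdisk $D_1\subseteq D$, $\boundary D_1=\alpha\cup\beta$ with $\beta\subseteq\boundary D$, so that the number of components of $\Delta\cup\set{\epsilon}$ lying in the interior of $\beta$ is as small as possible. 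Since $\delta_1$ and $\delta_2$ are nonadjacent this number is at least one; fix one such component $\gamma$. By minimality no arc of $D\cap E$ contained in $D_1$ joins two nonadjacent components of $\Delta$ (such an arc, with the appropriate side, would give a smaller configuration), so every arc of $D\cap E$ lying in $D_1$ joins two adjacent components of $\Delta$ or joins a component of $\Delta$ to $\epsilon$.

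First I would clear away the degenerate shapes of $\gamma$. If $\gamma=\epsilon$, then $\boundary D$ separates $P$ by \autoref*{boundary-D-cap-P}, so $\boundary D\cap P$ is as in one of the configurations of \autoref*{fig:E-cap-D-separating}; examining \autoref*{fig:PR} just as in the last paragraph of the proof of \autoref*{t-cap-E-is-empty}, there is an arc of $D\cap E$ outermost in $D$ running from $\epsilon\cap R$ to an adjacent arc of $\boundary H$ and cutting off a subdisk of $D$ whose interior misses $E$; surgering $E$ along it produces, by \autoref*{E-is-unique}, a nonseparating disk isotopic to $E$ but meeting $P$ in fewer than $p+q-1$ arcs, contradicting \autoref*{minimum-of-E-cap-P}. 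A similar outermost-arc argument deals with the case that $\gamma\in\Delta$ has empty label sequence (then $\gamma$ lies in a single disk region of $R\setminus\boundary E$ and, being separated from the rest of $\boundary D$ by $\alpha$, can be used to reduce $|D\cap\boundary N(t)|$). Hence $\gamma\in\Delta$ has nonempty label sequence, and by \autoref*{label-sequence} it is parallel on $R$ to $\delta_1$ and $\delta_2$ and not to $\boundary_3P$.

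The main step is then a counting argument in the spirit of \autoref*{adjacent-cpts-of-delta}. Consider the arcs of $D\cap E$ incident to $\gamma$ that lie in $D_1$; by \autoref*{no-arcs-connect-epsilon-to-itself} none has both endpoints on $\gamma$, and by the outermost choice each joins $\gamma$ to a neighbouring component. Because $\gamma$ is parallel to its neighbours, \autoref*{arcs-in-E} forbids an arc whose two endpoints carry the same label, which pins down the cyclic order in which these arcs leave $\gamma$ and meet each neighbour. Taking arcs outermost in $D$ and, whenever an outermost subdisk misses $E$, either surgering $E$ along it to get a nonseparating disk violating \autoref*{minimum-of-E-cap-P} via \autoref*{E-is-unique}, or recording the subdisk, one is forced into a $2n$-gon $D'\subseteq D$ with $n>1$ whose boundary alternates between exactly two labels and consists of arcs of $D\cap E$ together with arcs of $\boundary D\cap\boundary N(K\cup t)$, and whose interior is disjoint from $E$; that is, a \SC. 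The reason $n>1$ is forced is that an arc skipping over the component $\gamma$ produces more than $\floor{n/2}$ parallel arcs between some adjacent pair, beyond what an \autoref*{adjacent-cpts-of-delta}-type count permits without a \SC.

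Finally, the \SC $D'$ yields, exactly as in the discussion before the lemma, a solid torus $S=N(A_E\cup A_R)\subseteq H'$ for which $N(S\cup D')$ is a punctured lens space; thus $H'$ is reducible, which is impossible. This contradiction proves the lemma. I expect the principal difficulty to be the combinatorial bookkeeping in the third paragraph: one must follow the label sequences along all the components of $\Delta$ inside $\beta$ precisely enough to guarantee that the iteration terminates in a genuine \SC with $n>1$ (so that the punctured lens space is nontrivial), rather than in a bigon that only produces a reduction already forbidden by minimality. Disposing of the cases $\gamma=\epsilon$ and of idle (empty label sequence) components is a secondary but genuine technical point.
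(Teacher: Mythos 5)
Your overall strategy is the paper's: take an offending arc with an extremal choice of the subdisk it cuts off, show the trapped components of $\Delta$ are forced into a rigid combinatorial pattern, extract a \SC, and conclude that $H'$ would be reducible. But the step you yourself flag as the ``principal difficulty'' is exactly where the content of the lemma lives, and your sketch of it is not right. You assert that $n>1$ is forced because ``an arc skipping over $\gamma$ produces more than $\floor{n/2}$ parallel arcs between some adjacent pair.'' It does not: \autoref*{adjacent-cpts-of-delta} already forbids more than $\floor{n/2}$ such arcs, and no local violation of that bound occurs here. The actual mechanism is global. A trapped component $\gamma$ carries $n$ endpoints of arcs of $D\cap E$ (the length of its label sequence), none of which can return to $\gamma$ by \autoref*{no-arcs-connect-epsilon-to-itself} and all of which must, by the extremal choice of subdisk, run to the two neighbours of $\gamma$ in $\boundary D'$. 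Since each neighbour absorbs at most $\floor{n/2}$ of them, we get $n\le 2\floor{n/2}$, so $n$ is even and \emph{every} trapped component sends \emph{exactly} $n/2$ arcs to each neighbour. The \SC is then the $2|\Delta'|$--gon bounded by the middle arcs (those with labels $x_{n/2}$ and $x_{n/2+1}$) running across all of $\Delta'$ at once --- not a bigon between a single adjacent pair. Without this saturation argument your iteration has no reason to terminate in a \SC rather than in nothing at all.

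A second omission: \autoref*{label-sequence} only says the components of $\Delta$ are parallel on $R$, so a trapped component may carry the \emph{reverse} of its neighbour's label sequence, and in that case the $n/2$--$n/2$ splitting does not produce alternating labels and hence no \SC. The paper disposes of this separately: for two adjacent components with reversed sequences, either some arc joins them --- and an outermost such arc has both endpoints on the same label, violating \autoref*{arcs-in-E} --- or no arc joins them, in which case all $n$ arcs from one of them are funneled to its other neighbour, exceeding the $\floor{n/2}$ bound of \autoref*{adjacent-cpts-of-delta}. You need this case before you may assume all trapped components have literally identical label sequences. (Your preliminary cases are harmless but mostly unnecessary: a component of $\Delta$ with empty label sequence is already excluded by \autoref*{endpoints-on-gamma}.)
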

\begin{proof}
  By~\autoref*{label-sequence}, every component of $\Delta$ has the
  label sequence of $\delta$ or its reverse.  Suppose there were an
  arc of $D\cap E$ with endpoints in nonadjacent components of
  $\Delta$, and choose $\alpha$ to be one outermost in $D$.  The arc
  $\alpha$ cuts off a subdisk $D'$ in which every arc $\interior{D'}\cap E$
  connects adjacent components of $\Delta' = \Delta\cap \boundary D'$.

  If two components of $\Delta'$ have label sequences which are the
  reverse of one another, then we can find two such components
  $\delta_1$ and $\delta_2$ which are adjacent.  Suppose there is an
  arc of $\interior{D'}\cap E$ connecting them.  Then an outermost
  such arc in $D'$ violates~\autoref*{arcs-in-E}.  If no arcs of
  $\interior{D'}\cap E$ connect $\delta_1$ and $\delta_2$, then there
  is a component $\delta_3$ of $\Delta'$ adjacent to $\delta_1$ such
  that every arc with one endpoint in $\delta_1$ has its other
  endpoint in $\delta_3$.  This
  contradicts~\autoref*{adjacent-cpts-of-delta}.

  Therefore all components of $\Delta'$ which are also components of
  $\Delta$ have the same label sequence; let $n$ be the length of this
  sequence.  By~\autoref*{adjacent-cpts-of-delta}, $n$ must be even
  and every component of $\Delta'$ must contain exactly $n/2$ parallel
  arcs connecting an adjacent component.  But in this case, the
  $2|\Delta'|$--gon which appears in $D'$ is a \SC.  This is
  impossible by the remarks following~\autoref*{label-sequence}.
\end{proof}

\begin{lemma}\label{minimum-arcs-in-epsilon}
  Let $\delta$ be a component of $\Delta$ with label sequence
  $x_1,x_2,\dots,x_n$.  Then there are at least $2\ceil{n/2}$
  endpoints of arcs of $D\cap E$ in $\epsilon$.
\end{lemma}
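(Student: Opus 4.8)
The plan is to count the endpoints of arcs of $D\cap E$ that lie on the components of $\Delta$ and on $\epsilon$, using the restrictions proved in the preceding lemmas. By~\autoref*{label-sequence} all components of $\Delta$ are parallel in $R$ to one fixed component $\gamma$ of $\boundary P$ with $\gamma\neq\boundary_3P$, so in particular every component of $\Delta$ has the same label sequence $x_1,\dots,x_n$ up to reversal (the number of intersections of a \scc parallel to $\gamma$ with the fixed family $\boundary E\cap R$ is an isotopy invariant). Hence every component of $\Delta$ carries exactly $n$ endpoints of arcs of $D\cap E$: since $\delta\subseteq R$, each point of $\delta\cap\boundary E$ lies on an arc of $\boundary E\cap R$, and there are $n$ of them. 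Let $k\ge 1$ be the number of components of $\Delta$, so there are $kn$ such endpoints in all.

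Next I would classify the arcs of $D\cap E$ meeting $\Delta$. By~\autoref*{no-arcs-connect-epsilon-to-itself} none of them has both endpoints in the same component of $\Delta$, nor both endpoints in $\epsilon$, and by~\autoref*{SC} none joins two non-adjacent components of $\Delta$. So each of the $kn$ endpoints on $\Delta$ belongs either to an arc joining two adjacent components of $\Delta$ or to an arc whose other endpoint lies in $\epsilon$. The components of $\Delta$ together with $\epsilon$ occur in a cyclic order around $\boundary D$, consecutive ones separated by a component of $\boundary D\cap\boundary N(K\cup t)$; since $\epsilon$ is a single arc it has exactly two neighbours in this order, so there are exactly $k-1$ adjacent pairs of components of $\Delta$. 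By~\autoref*{adjacent-cpts-of-delta} at most $\floor{n/2}$ arcs of $D\cap E$ join any such pair, so at most $2(k-1)\floor{n/2}$ of the $kn$ endpoints on $\Delta$ are consumed by arcs of the first kind; the remaining ones, at least $kn-2(k-1)\floor{n/2}$ of them, belong to arcs running to $\epsilon$, and distinct such arcs have distinct endpoints on $\epsilon$. Therefore $\epsilon$ carries at least $kn-2(k-1)\floor{n/2}$ endpoints of $D\cap E$. When $n$ is even this is $kn-(k-1)n=n=2\ceil{n/2}$, and when $n$ is odd it is $kn-(k-1)(n-1)=n+k-1\ge n+1=2\ceil{n/2}$, provided $k\ge 2$.

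It remains to treat $n$ odd with $k=1$, and this is the step I expect to be the main obstacle. Here $\Delta$ is a single arc $\delta$, so all $n$ arcs of $D\cap E$ meeting $\delta$ run to $\epsilon$; since they are disjoint and properly embedded in the disk $D$ and $\delta$, $\epsilon$ are both arcs of $\boundary D$, they realise an order-preserving matching of the $n$ points of $\delta\cap\boundary E$ (labelled $x_1,\dots,x_n$) with $n$ points of $\epsilon\cap\boundary E$. I would argue that the middle of this matching forces a forbidden configuration: either some arc of $D\cap E$ has both endpoints on a single arc of $\boundary E\cap R$, excluded by~\autoref*{arcs-in-E}, or the two arcs flanking the middle, together with subarcs of $\delta$, $\epsilon$, and the intervening components of $\boundary D\cap\boundary N(K\cup t)$, bound a $2m$--gon with $m>1$ whose vertices carry only two labels, \ie a \SC, which cannot occur by the remarks following~\autoref*{label-sequence}. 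Making this dichotomy precise requires care, because $\epsilon$ — unlike a component of $\Delta$ — may also meet arcs of $\boundary E\cap P$, so the labels along $\epsilon$ and the position of $\epsilon$ relative to $\boundary N(K\cup t)$ must be tracked explicitly. Once this degenerate case is ruled out, the count of the second paragraph completes the proof.
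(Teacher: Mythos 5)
Your counting argument is precisely the paper's: the paper rules out arcs between nonadjacent components of $\Delta$ via \autoref*{SC}, bounds the arcs between each of the $k-1$ adjacent pairs by $\floor{n/2}$ via \autoref*{adjacent-cpts-of-delta}, and concludes that the remaining endpoints lie on $\epsilon$ --- this is exactly your computation $kn-2(k-1)\floor{n/2}$, compressed into two sentences. What your more careful bookkeeping exposes is real: when $\Delta$ has a single component and $n$ is odd, the count yields only $n=2\ceil{n/2}-1$ endpoints on $\epsilon$ (indeed, in that case \autoref*{no-arcs-connect-epsilon-to-itself} forces every arc of $D\cap E$ to run from $\delta$ to $\epsilon$, so there are \emph{exactly} $n$ such endpoints), and the paper's proof does not address this case. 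Your sketched repair --- extracting a forbidden arc or a \SC from the middle of the $n$ parallel arcs --- is left incomplete, and it is genuinely delicate for the reason you give: unlike a component of $\Delta$, the arc $\epsilon$ is not a pushoff of a component of $\boundary P$, may meet $\boundary E\cap P$, and its label sequence is not controlled by \autoref*{label-sequence}, so the pigeonhole that drives \autoref*{adjacent-cpts-of-delta} does not transfer directly. That said, the defect is only in the stated constant: the count does deliver at least $n\geq p\geq 2$ endpoints on $\epsilon$ (by \autoref*{endpoints-on-gamma} and the standing assumption $p>1$), and every invocation of this lemma in the paper (\autoref*{epsilon-meets-E}, \autoref*{outermost-in-D}, and Case 2 of \autoref*{hard-case}) uses only the bounds ``at least one'' or ``at least two''. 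So your proposal matches the paper's argument, and the one case you flag is a gap in the paper's own proof rather than a defect peculiar to yours; to finish cleanly you should either close that case or weaken the conclusion to ``at least $n$'' endpoints, which suffices for all later applications.
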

\begin{proof}
  No arcs of $D\cap E$ connect nonadjacent components of $\Delta$
  by~\autoref*{SC}.  Since at most $\floor{n/2}$ arcs connect adjacent
  components by~\autoref*{adjacent-cpts-of-delta}, there must be at
  least $2\ceil{n/2}$ arcs which have one endpoint in $\epsilon$.
\end{proof}

\begin{lemma}\label{epsilon-meets-E}
  The arc $\epsilon$ meets $\boundary E$ in $R$.
\end{lemma}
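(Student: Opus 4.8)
The plan is to argue by contradiction: assume $\epsilon\cap\boundary E\cap R=\emptyset$ and show that this forces $|\boundary D\cap\boundary E\cap P|$ to exceed the value pinned down by~\autoref*{boundary-D-cap-P}.

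First I would observe that $E$ is disjoint from $K\cup t$ by~\autoref*{t-cap-E-is-empty}, so every endpoint of an arc of $D\cap E$ lies on $\epsilon\cup\Delta$, and those lying on $\epsilon$ lie either in $\boundary D\cap\boundary E\cap P$ or in $\epsilon\cap\boundary E\cap R$. Under our assumption the latter set is empty, so the number of endpoints of arcs of $D\cap E$ that lie on $\epsilon$ is exactly $|\boundary D\cap\boundary E\cap P|$. By~\autoref*{no-arcs-connect-epsilon-to-itself} the set $\Delta$ is nonempty; fix a component $\delta$ of $\Delta$ and let $n$ be the length of its label sequence. By~\autoref*{label-sequence}, $\delta$ is parallel on $R$ to $\boundary_1P$ or to $\boundary_2P$, and since the arcs of $\boundary E\cap R$ incident to $\boundary_1P$ are $c^R$ together with the $p-1$ arcs $d_i^R$, while those incident to $\boundary_2P$ are $c^R$ together with the $q-1$ arcs $b_i^R$ (see~\autoref*{fig:PR}), we get $n=p$ or $n=q$ respectively; in particular $n\ge p\ge 2$. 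Now~\autoref*{minimum-arcs-in-epsilon} gives at least $2\ceil{n/2}$ endpoints of arcs of $D\cap E$ on $\epsilon$, so
\[
  |\boundary D\cap\boundary E\cap P|\ \ge\ 2\ceil{n/2}\ \ge\ n\ \ge\ p .
\]

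I would then run through the four possibilities for $\boundary D\cap P$ from~\autoref*{boundary-D-cap-P}. If $\boundary D$ does not separate $P$ then $|\boundary D\cap\boundary E\cap P|=0$, and if $\boundary D\cap P$ joins $\boundary_3P$ to itself then $|\boundary D\cap\boundary E\cap P|=1$; both contradict the displayed inequality since $p\ge 2$. If $\boundary D\cap P$ joins $\boundary_2P$ to itself then $|\boundary D\cap\boundary E\cap P|=p-1<p$, again contradicting the inequality regardless of whether $n=p$ or $n=q$. This leaves the case where $\boundary D\cap P$ joins $\boundary_1P$ to itself, so $|\boundary D\cap\boundary E\cap P|=q-1$; here the displayed inequality is contradictory only when $n=q$, so the point is to rule out $n=p$, \ie to show that the components of $\Delta$ are parallel on $R$ to $\boundary_2P$ rather than to $\boundary_1P$. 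For this I would use the fact that $\alpha=\boundary D\cap P$ separates $\boundary_2P$ from $\boundary_3P$ in $P$, so that $\boundary$--compressing $P$ along $D$ produces two annuli whose cores are isotopic to $\boundary_2P$ and to $\boundary_3P$; tracing $\boundary D$ out of the two ends of $\alpha$ identifies the components of $\Delta$ with curves parallel to these cores, and~\autoref*{label-sequence} forbids $\boundary_3P$, so the components of $\Delta$ are $\boundary_2P$--parallel and $n=q$. Then $q-1\ge 2\ceil{q/2}\ge q$, a contradiction, and the lemma follows.

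I expect the main obstacle to be exactly this last case -- more precisely, the claim that when $\alpha$ joins $\boundary_1P$ to itself the components of $\Delta$ cannot be $\boundary_1P$--parallel. Making it rigorous requires keeping careful track of how the single arc $\alpha\subseteq P$ is joined, across $\boundary N(K\cup t)$, to the arcs of $\Delta\subseteq R$, and of exactly how the $\boundary$--compression transforms the pair of pants $P$ (\cf the two-annuli picture used in~\autoref*{three-punctured-spheres}); the other three cases are immediate from the counting inequality together with~\autoref*{boundary-D-cap-P}, \autoref*{label-sequence}, and~\autoref*{minimum-arcs-in-epsilon}.
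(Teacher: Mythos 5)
Your counting reduction works cleanly in three of the four cases of \autoref*{boundary-D-cap-P}: granting that every endpoint of an arc of $D\cap E$ on $\epsilon$ would have to lie in $\boundary D\cap\boundary E\cap P$ if the lemma failed, the bound of \autoref*{minimum-arcs-in-epsilon} together with $n\geq p\geq 2$ kills the nonseparating case and the cases where $\boundary D\cap P$ joins $\boundary_2P$ or $\boundary_3P$ to itself. But the remaining case is a genuine gap, and it is exactly where your argument cannot be completed as sketched. When $\boundary D\cap P$ joins $\boundary_1P$ to itself you need the components of $\Delta$ to be parallel to $\boundary_2P$ (so that $n=q$ and $q-1\geq 2\ceil{q/2}$ fails), and your justification --- that $\boundary$--compressing $P$ along $D$ produces annuli with cores $\boundary_2P$ and $\boundary_3P$, and that ``tracing $\boundary D$ out of the two ends of $\alpha$'' forces $\Delta$ into those classes --- does not hold up. The components of $\Delta$ are arcs of $R$ joined to $\epsilon$ only through arcs running over $\boundary N(K\cup t)$, and nothing about the isotopy class of the single arc $\boundary D\cap P$ inside the pair of pants $P$ constrains which component of $\boundary P$ the curves $\delta'\subseteq R$ are parallel to. Indeed the paper's own later case analysis (\autoref*{endpoints-of-epsilon}) explicitly allows the configuration in which $\epsilon$ meets the component of $\boundary P$ parallel to $\gamma$, i.e.\ $\boundary D$ meeting $\boundary_1P$ while $\Delta$ is $\boundary_1P$--parallel, and disposes of it by entirely different means; so the implication you need is not available, and with $n=p$ your inequality only yields $q-1\geq p$, which is always true.

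The paper's proof avoids this by not counting at all in the separating case. Minimality of $|\boundary D\cap\boundary E\cap P|$ pins down exactly which arcs of $\boundary E\cap P$ the arc $\boundary D\cap P$ crosses, hence exactly which regions of $R\setminus\boundary E$ contain the two points of $\epsilon\cap\boundary P$ (\autoref*{fig:E-cap-D-separating}); these two regions are distinct in every separating case, while both ends of $\epsilon$ terminate on the foot of the tunnel, which lies in a single region of $R\setminus\boundary E$ by \autoref*{t-cap-E-is-empty}. Hence at least one of the two subarcs of $\epsilon\cap R$ must cross $\boundary E$. If you replace your $\boundary_1P$ case (or all three separating cases) with this region-chasing argument, the proof closes; as written it does not.
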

\begin{proof}
  Suppose first that $\boundary D$ is nonseparating in $P$ so that
  $\boundary D\cap \boundary E\cap P=\emptyset$.  Then $\boundary
  E\cap R\neq\emptyset$ by~\autoref*{minimum-arcs-in-epsilon}.

  If $\boundary D$ separates $P$, then by minimality of $|\boundary
  D\cap \boundary E\cap P|$, we know exactly in which regions of $R$
  the points $\epsilon\cap\boundary P$ lie.  The three cases are
  pictured in~\autoref*{fig:E-cap-D-separating}, and we see that
  the endpoints of $\epsilon\cap P$ lie in different regions of $R$.
  Therefore $\epsilon$ must cross $\boundary E$ in $R$.
\end{proof}


Recall that we are assuming $1<p<q$.

\begin{lemma}\label{endpoints-on-gamma}
  Let $\delta$ be a component of $\Delta$. Then there are at least
  $p$ arcs of intersection $D\cap E$ with endpoints in
  $\delta$.
\end{lemma}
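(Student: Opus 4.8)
\section*{Proof proposal for \autoref*{endpoints-on-gamma}}

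The plan is to count the points of $\delta\cap\boundary E$ directly from the isotopy class of $\delta$ in the pair of pants $R$, since every such point is an endpoint of an arc of $D\cap E$ and, by \autoref*{no-arcs-connect-epsilon-to-itself}, no arc of $D\cap E$ has both endpoints in $\delta$, so distinct points lie on distinct arcs. As $\delta$ is a component of $\Delta$ it is contained in $R=\closure{\boundary H\setminus P}$, so $\delta\cap\boundary E=\delta\cap(\boundary E\cap R)$. By \autoref*{label-sequence}, the \scc $\delta'\subseteq R$ obtained by joining the endpoints of $\delta$ by an arc across the foot of $t$ on $\boundary H$ is parallel in $R$ to a component $\gamma$ of $\boundary P$ with $\gamma\neq\boundary_3P$, so $\delta'$ is isotopic to $\boundary_1P$ or to $\boundary_2P$. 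By \autoref*{t-cap-E-is-empty} we may take the foot of $t$ disjoint from $E$ and contained in the interior of a single component of $R\setminus\boundary E$, so the closing-up arc meets $\boundary E$ nowhere and hence $|\delta\cap(\boundary E\cap R)|=|\delta'\cap(\boundary E\cap R)|$. It therefore suffices to prove $|\delta'\cap(\boundary E\cap R)|\geq p$.

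I would get this from a parity argument in $R$. Recall from \autoref*{minimum-of-E-cap-P} and \autoref*{fig:PR} that $\boundary E\cap R$ consists of the single arc $c^R$ joining $\boundary_1P$ to $\boundary_2P$, the $p-1$ arcs $d_i^R$ joining $\boundary_1P$ to $\boundary_3P$, and the $q-1$ arcs $b_i^R$ joining $\boundary_2P$ to $\boundary_3P$. Suppose first that $\delta'$ is isotopic to $\boundary_1P$. Then $\delta'$ separates $R$ into an annulus bounded by $\delta'$ and $\boundary_1P$ and a pair of pants bounded by $\delta'$, $\boundary_2P$, and $\boundary_3P$. Each of the $p$ arcs $c^R,d_1^R,\dots,d_{p-1}^R$ has exactly one endpoint on $\boundary_1P$, so it runs from the annulus piece into the pair-of-pants piece and therefore crosses $\delta'$ an odd number of times; thus $|\delta'\cap(\boundary E\cap R)|\geq p$. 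If instead $\delta'$ is isotopic to $\boundary_2P$, the same reasoning applied to the $q$ arcs $c^R,b_1^R,\dots,b_{q-1}^R$ gives $|\delta'\cap(\boundary E\cap R)|\geq q\geq p$ since $p<q$. Either way the bound holds, and the lemma follows.

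I do not expect a genuine obstacle; the work is bookkeeping. The two points needing care are (i) correctly identifying, from \autoref*{fig:PR} and \autoref*{P-R-remarks}, which arcs of $\boundary E\cap R$ are incident to each $\boundary_iP$, and (ii) being sure that passing between the arc $\delta$ and the closed curve $\delta'$ neither creates nor destroys intersections with $\boundary E$ — which is exactly what the normalization of $t$ relative to $E$ in \autoref*{t-cap-E-is-empty} was set up to guarantee. The resulting bound then feeds into \autoref*{minimum-arcs-in-epsilon}, which distributes these $p$ (or more) endpoints between $\epsilon$ and the components of $\Delta$ adjacent to $\delta$.
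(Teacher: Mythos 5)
Your proof is correct and follows essentially the same route as the paper, whose entire argument is the one-line observation that $\delta$ is parallel in $R$ to a component of $\boundary P$ other than $\boundary_3P$, and such components meet $E$ minimally $p$ times. Your version simply makes explicit the two points the paper leaves implicit --- the pair-of-pants separation/parity argument giving the lower bound of $p$ (resp.\ $q$) intersections, and the appeal to \autoref*{no-arcs-connect-epsilon-to-itself} to convert a count of intersection points into a count of distinct arcs --- which is a reasonable amount of added rigor.
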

\begin{proof}
  By the previous remarks, $\delta$ is parallel to a component of
  $\boundary P$, and these meet $E$ minimally $p$ times.
\end{proof}

\begin{lemma}\label{outermost-in-D}
  There is an arc $\alpha$ of $D\cap E$, outermost in $D$, with one
  endpoint in $\epsilon$ and the other in an adjacent component of
  $\Delta$.  This arc cuts off a disk $D'$ of $D$ with $\boundary
  D'=\alpha\cup\beta$.  The endpoints of $\beta$ lie in $d_{\inv{q}}^R$ and
  $b_{\inv{p}}^R$ or $d_{(p-1)\inv{q}}^R$ and $b_{(q-1)\inv{p}}^R$
  depending on whether the foot of the tunnel lies in $H_1$ or $H_2$.
  Furthermore, $\beta$ does not meet $P$.
\end{lemma}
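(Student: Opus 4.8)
The plan is to locate the arc $\alpha$ by working out the combinatorics of $D\cap E$ on the disk $D$, and then to name the two arcs of $\boundary E\cap R$ at the ends of $\beta$ from the position of the foot of $t$ in $R\setminus\boundary E$.

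First I would fix the pattern of $D\cap E$. By \autoref*{no-arcs-connect-epsilon-to-itself} no component of $D\cap E$ has both endpoints in $\epsilon$ or both in a single component of $\Delta$, and by \autoref*{SC} no component joins two nonadjacent components of $\Delta$; since $\Delta\neq\emptyset$ (the remark after \autoref*{no-arcs-connect-epsilon-to-itself}), every arc of $D\cap E$ therefore joins two adjacent components of $\Delta$ or joins $\epsilon$ to a component of $\Delta$. Moreover each component $\delta$ of $\Delta$ is parallel to $\boundary_1P$ or $\boundary_2P$ by \autoref*{label-sequence}, hence meets $\boundary E$ and carries endpoints of arcs of $D\cap E$. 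It follows that an outermost arc of $D\cap E$ in $D$ cuts off a subdisk whose boundary contains no whole component of $\Delta$, so it joins \emph{adjacent} pieces of the cyclic sequence $\epsilon,\delta_1,\dots,\delta_k$; peeling off outermost arcs one at a time, the same is true of every arc of $D\cap E$, so these arcs fall into parallel bundles between consecutive pieces. By \autoref*{minimum-arcs-in-epsilon}, $\epsilon$ carries an endpoint of such an arc, so a bundle meeting $\epsilon$ is nonempty; the arc of that bundle innermost toward the arc $\phi\subseteq\boundary N(K\cup t)$ separating $\epsilon$ from the adjacent component $\delta$ is then outermost in $D$. I would take this to be $\alpha$, and write $\boundary D'=\alpha\cup\beta$ for the subdisk $D'$ it cuts off, with $\beta$ a sub-arc of $\epsilon$, then $\phi$, then a sub-arc of $\delta$.

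Next I would show $\beta\cap P=\emptyset$. The arc $\epsilon$ decomposes as (sub-arc of $R$) $\cup$ ($\boundary D\cap P$) $\cup$ (sub-arc of $R$), its two ends lying on $\boundary N(K\cup t)$, and by \autoref*{epsilon-meets-E} it crosses $\boundary E$ inside $R$. Choosing the bundle at the end of $\epsilon$ near which such a crossing occurs, the sub-arc of $\epsilon$ in $\beta$ runs from that end into $R$ and stops at that crossing, before it can reach $\boundary P$; as $\phi\subseteq\boundary N(K\cup t)$ and the sub-arc of $\delta$ lies in $R$, we get $\beta\cap P=\emptyset$ and $\boundary\alpha\subseteq\boundary E\cap R$. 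It then remains to name the two arcs of $\boundary E\cap R$ carrying $\boundary\beta$: both ends of $\beta$ sit at the foot of $t$ — one where $\epsilon$ leaves the foot, the other where $\delta$ does, across $\phi$ — and since the foot lies in a hexagonal region $H_1^R$ or $H_2^R$ of $R\setminus\boundary E$, each of these first $\boundary E$-crossings meets an arc of $\boundary E\cap R$ on the boundary of that region. By \autoref*{arcs-in-E} the arc $\alpha$ cannot have both endpoints on one component of $\boundary E\setminus P$, so the two crossings lie on distinct such arcs, and comparing with \autoref*{fig:PR} identifies them as $d_{\inv q}^R$ and $b_{\inv p}^R$ when the foot is in $H_1$, and $d_{(p-1)\inv q}^R$ and $b_{(q-1)\inv p}^R$ when it is in $H_2$.

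I expect the subtle points to be, first, checking that the sub-arc of $\epsilon$ appearing in $\beta$ really avoids $\boundary D\cap P$, which forces the choice of bundle to be coordinated with \autoref*{epsilon-meets-E} and \autoref*{boundary-D-cap-P} and uses the minimality of $(|D\cap E|,|D\cap\boundary N(t)|,|\boundary D\cap\boundary E\cap P|)$; and second, excluding the third arc $c^R$ bounding the hexagonal region from $\boundary\beta$, which is where the precise placement of the foot inside $H_1^R$ or $H_2^R$ (cf.\ \autoref*{fig:E-cap-D-separating}) is needed. The bundle structure used throughout rests on the absence of Scharlemann cycles noted after \autoref*{label-sequence} together with \autoref*{adjacent-cpts-of-delta}.
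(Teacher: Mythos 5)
Your construction of the outermost arc $\alpha$ (via the bundle structure forced by \autoref*{no-arcs-connect-epsilon-to-itself}, \autoref*{SC}, \autoref*{adjacent-cpts-of-delta}, and the fact that every component of $\Delta$ carries endpoints of $D\cap E$) and your reduction of $\beta\cap P=\emptyset$ to a choice of end of $\epsilon$ guided by \autoref*{epsilon-meets-E} both match the paper's reasoning in substance. The problem is the last step, which is the actual content of the lemma, and there you have a genuine gap. You assume that the foot of the tunnel lies in a hexagonal region of $R\setminus\boundary E$, but nothing established so far rules out a rectangular region; and even granting a hexagonal region, its boundary contains \emph{three} arcs of $\boundary E\cap R$ (the two named ones together with $c^R$), so "the two crossings lie on distinct arcs" does not single out the pair $\set{d_{\inv{q}}^R, b_{\inv{p}}^R}$ (resp.\ $\set{d_{(p-1)\inv{q}}^R, b_{(q-1)\inv{p}}^R}$). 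You flag the exclusion of $c^R$ yourself but defer it to "the precise placement of the foot inside $H_1^R$ or $H_2^R$," which cannot work: the foot sits in an open disk region, and its position within that region carries no combinatorial information (and \autoref*{fig:E-cap-D-separating}, which you cite, concerns $\boundary D\cap P$ for separating $\boundary D$, not the foot).

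The missing idea is an orientation argument. Because $\alpha$ is outermost, one may surger $E$ along the subdisk $D'$; by \autoref*{E-is-unique} and \autoref*{minimum-of-E-cap-P} (together with \autoref*{arcs-in-E}, which guarantees both halves of $E$ cut along $\alpha$ contain at least two arcs of $\boundary E\cap P$), this forces the two endpoints of $\beta$ to cross $\boundary E$ with \emph{opposite} signs. Orienting $\boundary E$, the arcs of $\boundary E\cap R$ bounding any rectangular region, and any pair involving $c^R$ on the boundary of a hexagonal region, are coherently oriented; only the pairs $\set{d_{\inv{q}}^R, b_{\inv{p}}^R}$ and $\set{d_{(p-1)\inv{q}}^R, b_{(q-1)\inv{p}}^R}$ realize opposite signs. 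This single argument simultaneously places the foot in a hexagonal region and excludes $c^R$, and it is what your proposal lacks.
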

\begin{proof}
  The first part follows from~\autoref*{no-arcs-connect-epsilon-to-itself}
  and~\autoref*{epsilon-meets-E}.  If $\alpha$ did not have an
  endpoint in a component of $\Delta$ adjacent to $\epsilon$ it could
  not be outermost by~\autoref*{endpoints-on-gamma}.

  If $\beta\cap P\neq\emptyset$, there is at least one other arc of
  $D\cap E$ with an endpoint in $\epsilon\cap R$
  by~\autoref*{minimum-arcs-in-epsilon}.  An outermost such arc gives
  the desired $\alpha$ by~\autoref*{adjacent-cpts-of-delta}
  and~\autoref*{SC}.
  
  If $\beta$ has endpoints on the same component of $\boundary E\cap
  R$ in $R$, we obtain an arc in $E$
  contradicting~\autoref*{arcs-in-E}.  Furthermore, since $\alpha$ is
  outermost in $D$, we may surger $E$ along a subdisk of $D$ to obtain
  a new disk which does not meet $E$.  Therefore $\beta$ has opposite
  signs of intersection with $\boundary E$ in $R$.

  Orienting $\boundary E$ we see that all arcs of $\boundary E\cap R$
  in $R$ which connect two boundary components are coherently
  oriented.  Therefore $\beta$ must lie in one of the hexagonal
  regions.  Checking the possibilities, we see that the condition that
  $\beta$ has opposite signs of intersection with $\boundary E$
  implies that it meets either $d_{\inv{q}}^R$ and
  $b_{\inv{p}}^R$ or $d_{(p-1)\inv{q}}^R$ and $b_{(q-1)\inv{p}}^R$.
\end{proof}




\section{\texorpdfstring{Families of knots which are not
    $1$--bridge}{Families of knots which are not 1--bridge}}
Fix a nontrivial $2$--bridge knot $L$.  Let $q=p+1$ and write $(M, H,
K) = (M_{p,p+1}^L, H_{p,p+1}^L, K_{p,p+1}^L)$.  Suppose that there is
a tunnel $t$ as in~\autoref*{1-bridge-iff-1-tunnel} so that
$M'=\closure{M\setminus (K\cup t)}$ is a handlebody, and suppose that
$t$ lies entirely in $H$.  

\begin{lemma}\label{easy-case}
  Any $\boundary$--compressing disk for $P$ in $M'$
  violates~\autoref*{three-punctured-spheres}.  
\end{lemma}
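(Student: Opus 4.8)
The plan is to show that $\boundary$-compressing $P$ along any $\boundary$-compressing disk $D\subseteq M'$ produces an annulus, or pair of annuli, whose core(s) are primitive in neither side of $P$, directly contradicting \autoref*{three-punctured-spheres}. First note the setup applies: since $t\subseteq H$ we have $M'=H'\cup_P J$ with the $J$--side unchanged, $P$ is still a separating, incompressible $3$--punctured sphere in the genus $3$ handlebody $M'$, and $\boundary$--compressing it yields one or two annuli. Because $P$ is $\boundary$--incompressible in $J$ by \autoref*{P-in-J}, any $\boundary$--compressing disk $D$ for $P$ in $M'$ may be taken to lie in $H'$. If $D$ is disjoint from $E$, we are done at once: \autoref*{core-not-prim} says exactly that the core of the annulus obtained by $\boundary$--compressing $P$ along $D$ is primitive in neither $J$ nor $H'$. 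So the real content is that, when $q=p+1$, \emph{every} $\boundary$--compressing disk for $P$ in $H'$ can be cleared of $E$; equivalently, there is no such disk $D$ with $|D\cap E|>0$ realizing the minimum of the triple $(|D\cap E|,\,|D\cap\boundary N(t)|,\,|\boundary D\cap\boundary E\cap P|)$.

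Assume for contradiction that such a $D$ exists, and apply \autoref*{outermost-in-D}: there is an arc $\alpha$ of $D\cap E$, outermost in $D$, with one endpoint in $\epsilon$ and the other in an adjacent component of $\Delta$, cutting off a subdisk $D'\subseteq D$ with $\boundary D'=\alpha\cup\beta$, where $\beta$ misses $P$ and its endpoints lie on $d^R_{\inv{q}}$ and $b^R_{\inv{p}}$, or on $d^R_{(p-1)\inv{q}}$ and $b^R_{(q-1)\inv{p}}$, according to which of the hexagons $H_1,H_2$ contains the foot of $t$. Now I would specialize the indices using $q=p+1$: since $q\equiv 1\pmod p$ the mod-$p$ inverse is $\inv{q}=1$, so $d^R_{\inv{q}}=d^R_1$ and $d^R_{(p-1)\inv{q}}=d^R_{p-1}=d^R_{-1}$; since $p\equiv -1\pmod q$ the mod-$q$ inverse is $\inv{p}=q-1$, so $b^R_{\inv{p}}=b^R_{q-1}=b^R_{-1}$ and $b^R_{(q-1)\inv{p}}=b^R_1$. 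Hence $\beta$ runs between $d^R_1$ and $b^R_{q-1}$, or between $d^R_{p-1}$ and $b^R_1$, and in either case its two endpoints lie on the two ``extreme'' arcs bounding a single hexagon of $R$.

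The finishing step is then to surger $E$ along $D'$ in the style of the proofs of \autoref*{t-cap-E-is-empty} and \autoref*{min-separating-disks}: because $\interior D'$ misses $E$ and $D'\subseteq H'$ misses $K$ and $t$, surgering $E$ along $D'$ splits $E$ into two essential disks of $\closure{H\setminus N(K)}$, exactly one of which, say $E_1$, is nonseparating, hence isotopic to $E$ by \autoref*{E-is-unique}; the other, $E_2$, then meets $P$ in at least three arcs by \autoref*{min-separating-disks}, while $\beta\cap P=\emptyset$ forces $|\boundary E_1\cap P|+|\boundary E_2\cap P|=|\boundary E\cap P|=p+q-1$, so $|\boundary E_1\cap P|\le p+q-4$, contradicting \autoref*{minimum-of-E-cap-P}. (If instead the combinatorics makes $E_2$ trivial rather than essential, then $D'$ guides an isotopy reducing the minimized triple, the same contradiction.) Either way no $\boundary$--compressing disk for $P$ in $H'$ meets $E$, so \autoref*{core-not-prim} applies to it and the lemma follows.

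The main obstacle is the final combinatorial verification: one must confirm, by tracking the cyclic orders of the arcs $a^P, c^R, b^P_i, b^R_i, d^P_l, d^R_l$ around $\boundary_1P,\boundary_2P,\boundary_3P$ in \autoref*{fig:PR} together with the identifications of \autoref*{P-R-remarks} (and the index conventions — mod $p$ for the $d$--arcs, mod $q$ for the $b$--arcs), that when $q-p=1$ the arc $\alpha$ singled out by \autoref*{outermost-in-D} really does fence an arc of $\boundary E\cap P$ off into the component of $E\setminus\alpha$ away from $\beta$. This is precisely where $q=p+1$ is used essentially: for general $q$ the two arcs $d^R_{\inv q}$ and $b^R_{\inv p}$ are separated by many parallel $b$-- or $d$--arcs, the outermost bigon $D'$ survives, and the reduction fails — the phenomenon treated separately in the ``hard case'' \autoref*{hard-case}.
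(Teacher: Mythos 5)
Your skeleton matches the paper's: restrict to disks in $H'$ via \autoref*{P-in-J}, dispose of disks disjoint from $E$ with \autoref*{core-not-prim}, and then rule out a disk meeting $E$ that minimizes the triple, starting from the outermost arc $\alpha$ of \autoref*{outermost-in-D}; your index computation $\inv{q}=1$, $\inv{p}=q-1$ when $q=p+1$ is also correct. The gap is in your finishing step. The subdisk $D'$ produced by \autoref*{outermost-in-D} has $\boundary D'=\alpha\cup\beta$ where $\beta$ joins a point of $\epsilon$ to a point of an \emph{adjacent} component of $\Delta$, and adjacency means exactly that $\beta$ traverses an arc of $\boundary D\cap\boundary N(K\cup t)$. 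Hence the two disks obtained by surgering $E$ along $D'$ are properly embedded only in $H'=\closure{H\setminus N(K\cup t)}$: part of their boundary runs over $\boundary N(K\cup t)$, and that arc may wind around $K$ (this is the whole content of the tunnel). They are therefore not essential disks of $\closure{H\setminus N(K)}$ with boundary on $\boundary H$, so \autoref*{E-is-unique}, \autoref*{min-separating-disks}, and \autoref*{minimum-of-E-cap-P} cannot be applied to them and the count $|\boundary E_1\cap P|+|\boundary E_2\cap P|=p+q-1$ yields no contradiction. A symptom of the error: nothing in your final count uses $q=p+1$, so the same two lines would prove \autoref*{hard-case} as well, which the paper needs a long argument for.

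What the paper does at this point stays inside $E$. Of the two possibilities for $\alpha$, the one with endpoints on $b_1^R$ and $d_{-1}^R$ is already forbidden by \autoref*{arcs-in-E} (it fences off a subdisk of $E$ containing only $a^P$ among the arcs of $\boundary E\cap P$), so $\alpha$ joins $b_{-1}^R$ to $d_1^R$ and cuts off a subdisk $E'$ of $E$ meeting $\boundary E\cap(P\cup R)$ only in $b_{-1}^P$, $c^R$, and $d_1^P$. Any arc of $D\cap E$ with an endpoint on $c^R$ would lie in $E'$ and violate \autoref*{arcs-in-E}, so no component of $\Delta$ meets $c^R$; but a component of $\Delta$ parallel to $\boundary_1P$ or $\boundary_2P$ must cross $c^R$, so every component of $\Delta$ would be parallel to $\boundary_3P$, contradicting \autoref*{label-sequence}. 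This is precisely where $q=p+1$ enters (it makes $b_{\inv{p}}^R=b_{-1}^R$ and $d_{\inv{q}}^R=d_1^R$ the two arcs flanking $c^R$); you need an argument of this combinatorial kind in place of the surgery count.
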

\begin{proof}
  We only need to consider $\boundary$--compressing disks for $P$ in
  $H'=\closure{H\setminus N(K\cup t)}$ which meet $E$
  by~\autoref*{core-not-prim}.  So let $D$ be a
  $\boundary$--compressing disk for $P$ in $H'$ which minimizes
  $(|D\cap E|, |D\cap\boundary N(t)|, |\boundary D\cap \boundary E\cap
  P|)$, and let $D'$ be as in~\autoref*{outermost-in-D}.  Examining
  the options given by that lemma, we see that $\alpha$ must either
  have one endpoint in $b_{-1}$ and the other in $d_1$ or one endpoint
  in $b_1$ and the other in $d_{-1}$.  An arc with endpoints in $b_1$
  and $d_{-1}$ violates~\autoref*{arcs-in-E}, so $\alpha$ must be of
  the first type.

  The arc $\alpha$ cuts off a subdisk $E'$ of $E$ which meets $P$ in
  exactly two arcs ($b_{-1}^P$ and $d_1^P$).  An arc in $E'$ with one
  endpoint in $c$ would violate~\autoref*{arcs-in-E}, and so no
  component $\delta$ of $\Delta$ meets $c$.  Therefore every component
  of $\Delta$ is parallel to $\boundary_3P$, but this
  contradicts~\autoref*{label-sequence}.
\end{proof}

Now we tackle the more difficult case when $q=2p\pm 1$, $p>1$, and
$q>3$.  The intersection patterns of $\boundary E\cap P$ and
$\boundary E\cap R$ are special in these cases, and they are pictured
in~\autoref*{fig:PR-special-case}.  The disk $E$ remains as
in~\autoref*{fig:E}.  There are two hexagonal regions in each of $P$
and $R$; we will refer to these regions as $H_1^P$, $H_2^P$, $H_1^R$,
and $H_2^R$.  Note that when $q=2p-1$, $H_1$ and $H_2$ are reversed
from~\autoref*{fig:PR}.

\begin{figure}[h!tb]
  \begin{center}
    \def\svgwidth{0.9\textwidth}
    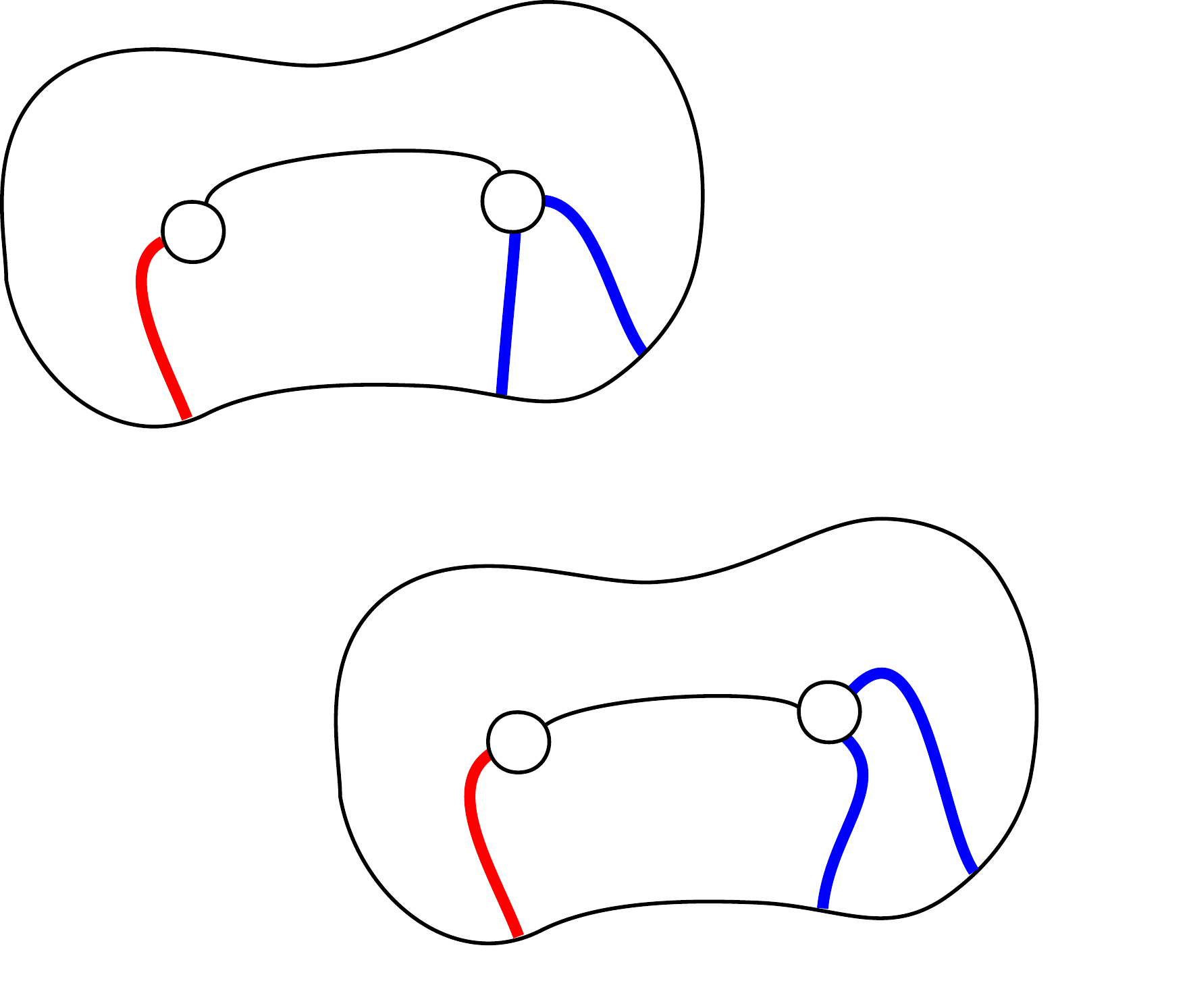
  \end{center}
  \caption{The surfaces $P$ and $R$ in the case $q=2p\pm 1$}
  \label{fig:PR-special-case}
\end{figure}


As before, let $D$ be a $\boundary$--compressing disk for $P$ in $H'$
which minimizes $(|D\cap E|, |D\cap\boundary N(t)|, |\boundary D\cap
\boundary E\cap P|)$.  Let $\beta$ be as in~\autoref*{outermost-in-D}.
Examining the possibilities given by that lemma, we see that $\beta$
must have endpoints on $d_{-1}$ and $b_2$, or $d_1$ and $b_{-2}$.  The
two possibilities for $\alpha\subseteq E$ are shown
in~\autoref*{fig:alpha-E}.



\begin{figure}[h!tb]
  \begin{center}
    \def\svgwidth{0.5\textwidth}
    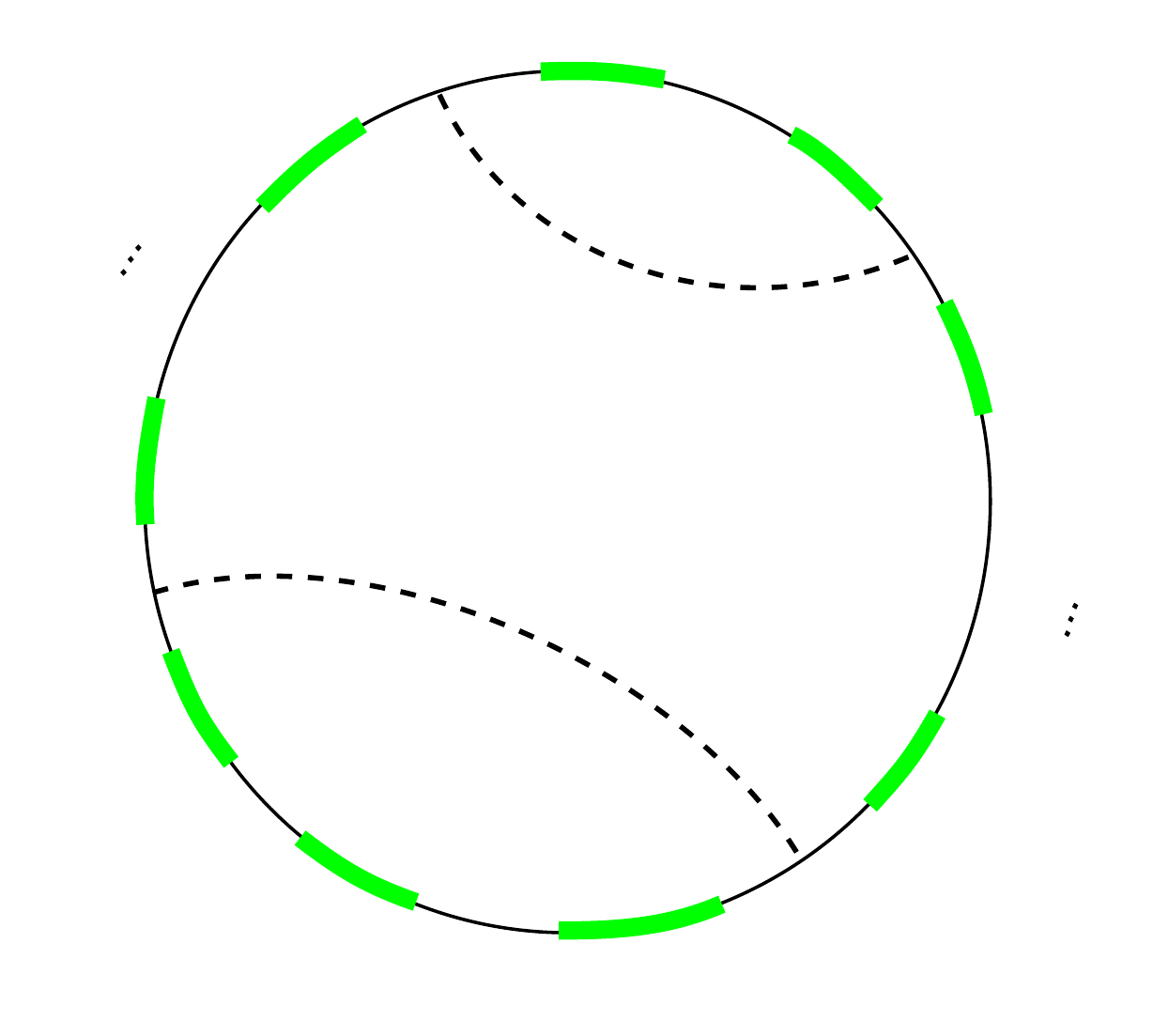
  \end{center}
  \caption{Possibilities for $\alpha$ in $E$}
  \label{fig:alpha-E}
\end{figure}

Let $R'=R\cap H'$.

\begin{lemma}\label{endpoints-of-epsilon}
  When $q=2p\pm 1$, the arc $\epsilon$ has endpoints in different
  components of $R'\setminus\Delta$.
\end{lemma}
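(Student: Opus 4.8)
The plan is to argue by contradiction: suppose the two endpoints of $\epsilon$ lie in one component $C$ of $R'\setminus\Delta$. Write $\epsilon=r_1\cup\lambda\cup r_2$, where $\lambda=\boundary D\cap P$ is the essential arc of $P$ and $r_1,r_2$ are the two feet of $\epsilon$ lying in $R'$, each running from the circle $c_0:=\boundary N(t)\cap\boundary H$ to an endpoint of $\lambda$ on $\boundary P$. By \autoref*{label-sequence} (and the discussion preceding it) the components of $\Delta$ are mutually parallel essential arcs of the four-holed sphere $R'$, all parallel to one boundary component $\boundary_j P$ with $j\in\{1,2\}$ and each having both endpoints on $c_0$; being nested, they cut $R'$ into an annular region $A$ incident to $\boundary_j P$ and to $c_0$, a chain of rectangular regions incident only to $c_0$ and to arcs of $\Delta$, and one pair-of-pants region $C_0$ incident to $c_0$ and to the remaining two boundary components $\boundary_i P$ (with $\{i,j\}=\{1,2\}$) and $\boundary_3 P$. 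Each $r_k$ is disjoint from $\Delta$ and meets $\boundary P$, so it lies in $A$ or in $C_0$. Hence $C=A$ and $\lambda$ joins $\boundary_j P$ to itself, or $C=C_0$ and both endpoints of $\lambda$ lie among $\{\boundary_i P,\boundary_3 P\}$; so it suffices to show that $\lambda$ meets $\boundary_j P$ in exactly one endpoint.

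Next I would bring in \autoref*{outermost-in-D}. Up to the symmetries exchanging $q=2p+1$ with $q=2p-1$ and $H_1$ with $H_2$, the outermost arc $\alpha$ of $D\cap E$ in $D$ cuts off a disk $D'$ with $\boundary D'=\alpha\cup\beta$, $\beta=\beta_\epsilon\cup\phi\cup\beta_{\delta_0}$, where $\beta$ misses $P$, $\beta_\epsilon\subseteq r_1$ is a subarc of one foot, $\phi\subseteq\boundary N(K\cup t)$, $\beta_{\delta_0}\subseteq\delta_0$ for a component $\delta_0$ of $\Delta$ adjacent to $r_1$, and the two endpoints of $\beta$ lie on $d_{-1}^R$ and on $b_2^R$. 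These endpoints lie on $\boundary D\cap\boundary E$, so one of $r_1,\delta_0$ crosses $d_{-1}^R$ there and the other crosses $b_2^R$. In the special case the $d_i^R$ are exactly the arcs of $\boundary E\cap R$ incident to $\boundary_1 P$, the $b_i^R$ exactly those incident to $\boundary_2 P$, and $c^R$ joins $\boundary_1 P$ to $\boundary_2 P$ (\autoref*{fig:PR-special-case}). Since a curve parallel to $\boundary_1 P$ meets minimally only the $d_i^R$ and $c^R$, one parallel to $\boundary_2 P$ only the $b_i^R$ and $c^R$, and $\delta_0$ is parallel to $\boundary_j P$ but not to $\boundary_3 P$, we read off $j=1$ when $\delta_0$ crosses $d_{-1}^R$ and $j=2$ when $\delta_0$ crosses $b_2^R$. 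In either case $r_1$ crosses the other of these two arcs and then enters the region of $R'\setminus\boundary E$ on its far side; following that region in \autoref*{fig:PR-special-case} (using minimality of $|D\cap E|$, so that any further crossing of $\boundary E$ by $r_1$ could be removed by a standard surgery of $E$ along an outermost subdisk of $D$) shows that $r_1$ terminates on a component of $\boundary P$ different from $\boundary_j P$. Thus $\lambda$ has at least one endpoint off $\boundary_j P$.

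It remains to exclude the case $C=C_0$, where $\lambda$ misses $\boundary_j P$ altogether. There $\boundary D$ is disjoint from $\boundary_j P$, and \autoref*{boundary-D-cap-P} locates the points $\epsilon\cap\boundary P$ according to whether $\boundary D$ separates $P$ and which component $\lambda$ returns to; combined with \autoref*{fig:PR-special-case}, the now-familiar innermost-arc surgery of $E$ along an outermost subdisk of $D$ then produces either an arc contradicting \autoref*{arcs-in-E}, or a nonseparating disk isotopic to $E$ by \autoref*{E-is-unique} which meets $P$ fewer than $p+q-1$ times, contradicting \autoref*{minimum-of-E-cap-P}. Hence $\lambda$ meets $\boundary_j P$ exactly once, so one of $r_1,r_2$ lies in $A$ and the other in $C_0$, and the two endpoints of $\epsilon$ lie in different components of $R'\setminus\Delta$.

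The main obstacle I anticipate is the figure-chasing in the last two paragraphs: pinning down precisely which component of $\boundary P$ the foot $r_1$ reaches after crossing $d_{-1}^R$ or $b_2^R$ in the special intersection pattern, and treating uniformly the two mirror subcases $q=2p\pm1$ together with the two choices $H_1,H_2$ for the tunnel foot, all while keeping the minimality of $(|D\cap E|,|D\cap\boundary N(t)|,|\boundary D\cap\boundary E\cap P|)$ in force so that no stray arc of $D\cap E$ disturbs the configuration.
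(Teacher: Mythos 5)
Your reduction of the lemma to the claim ``$\lambda=\boundary D\cap P$ meets $\boundary_jP$ in exactly one endpoint'' is fine: since $\Delta$ is nonempty and its components are nested arcs each separating $\boundary_jP$ from the other two old boundary components, each foot $r_k$ lies either in the region $A$ adjacent to $\boundary_jP$ or in the region $C_0$ adjacent to the other two, and the two statements are equivalent. The problem is with how you try to prove the reduced claim. In your second paragraph you assert that after $r_1$ crosses the single arc $d_{-1}^R$ (or $b_2^R$) it ``enters the region of $R'\setminus\boundary E$ on its far side'' and terminates on $\boundary P$ there, justifying this by saying that minimality of $|D\cap E|$ lets any further crossing of $\boundary E$ by $r_1$ be surgered away. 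That is false: each crossing of $\epsilon$ with $\boundary E$ in $R$ is an endpoint of an \emph{essential} arc of $D\cap E$, and \autoref*{minimum-arcs-in-epsilon} together with \autoref*{endpoints-on-gamma} forces $\epsilon$ to carry many such endpoints ($\epsilon\cap R$ meets $\boundary E$ at least $2\ceil{n/2}$ times, with $n\geq p$). Indeed, in the proof of \autoref*{hard-case} the foot on the side of $\beta$ is shown to cross $b_2,b_4,\dots,b_{-1}$ and possibly to wrap around $\boundary_1P$ before ending; nothing in your local picture near $d_{-1}^R$ and $b_2^R$ prevents $r_1$ from wandering through many regions and terminating on $\boundary_jP$ after all. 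Your third paragraph, which is supposed to exclude $C=C_0$, is only a gesture (``produces either an arc contradicting \autoref*{arcs-in-E} or a disk contradicting \autoref*{minimum-of-E-cap-P}'') and does not engage with the actual subcases ($\lambda$ joining $\boundary_iP$ to $\boundary_3P$, or $\boundary_iP$ or $\boundary_3P$ to itself).

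More fundamentally, the paper's proof is not a region-chasing argument at all. It assumes both endpoints of $\epsilon$ lie in one component of $R'\setminus\Delta$, bands $D$ to itself along $\boundary N(K\cup t)$ to obtain a planar surface $S\subseteq\closure{H\setminus N(K)}$ with $|\Delta|$ boundary components parallel to a curve $\gamma\subseteq\boundary P$ and one boundary component meeting $P$ in a single essential arc, attaches a $2$--handle along $\gamma$ to cap $S$ off to an annulus $A$ in a solid torus $U$, and then derives the contradiction from \cite[Lemma 2.5.3]{CGLS} together with \autoref*{attaching-2-h} and \autoref*{not-isotopic-to-boundary} (the combinatorics of \autoref*{fig:PR-special-case} enter only to show $\boundary_1A$ is essential in $\boundary U$). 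Some such global input seems unavoidable here; the hypothesis that the two endpoints of $\epsilon$ are in the same component is exactly what makes the banded surface close up into an annulus, and it is that annulus, not the local intersection pattern, that is being contradicted. As written, your argument has a genuine gap and would need to be replaced by, or supplemented with, an argument of this global type.
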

\begin{proof}
  Suppose that $\epsilon$ has its endpoints in the same component of
  $R'\setminus\Delta$.  Then $D$ extends to a properly embedded
  surface $S$ in $\closure{H\setminus N(K)}$ by attaching bands along
  $\boundary D$.  This surface has $|\Delta|+1$ boundary components on
  $\boundary H$, $|\Delta|$ of which are parallel to a component of
  $\boundary P$ and one of which meets $P$ in a single essential arc.
  Note that since the bands are attached in an orientation preserving
  way, $S$ is orientable.  Furthermore, we can see that $S$ is a
  punctured sphere by calculating $\chi(S)$.

  The surface $S$ has $|\Delta|$ boundary components parallel to a
  component $\boundary P$.  Let $\gamma$ be a \scc in $R$ which does
  not meet $\epsilon$ and is parallel to this component.  Note that
  $\gamma$ cannot be parallel to $\boundary_3P$
  by~\autoref*{label-sequence}.  After attaching a $2$--handle to $H$
  along $\gamma$, $S$ extends to an annulus $A$ in a solid torus $U$.
  Let $Q$ be the annulus in $\boundary U$ consisting of $P$ together
  with the trivial disk bounded by the component of $\boundary P$
  which was made trivial by the $2$--handle attachment.  Note that $Q$
  is longitudinal on $\boundary U$.  One component of $\boundary A$
  lies on $\boundary U$.  Call it $\boundary_1A$.  The other,
  $\boundary_2A$, lies on $\boundary N(K)$.

  We want to show that either $\boundary_1A$ is parallel to a
  component of $\boundary Q$ or meets $Q$ in an essential arc.  If
  $\boundary D$ is nonseparating in $P$, then $\boundary_1A$ meets $Q$
  in an essential arc, and so it is certainly nontrivial in $\boundary
  U$. So suppose that $\boundary D$ separates $P$.  The possibilities
  for $\boundary D\cap P$ appear in~\autoref*{fig:E-cap-D-separating}.
  If $\epsilon$ meets the component of $\boundary P$ parallel to
  $\gamma$, it is clear that we can isotop $\boundary_1A$ to be
  parallel to a component of $\boundary Q$.  Using this fact, there
  are only a few remaining cases to check:

  \begin{enumerate}
  \item If $\boundary D$ meets $\boundary_3P$ and $\gamma$ is parallel
    to $\boundary_1P$, then $\epsilon$ meets either $b_2^R$ or
    $b_{-2}^R$ by~\autoref*{outermost-in-D}, depending on whether the
    tunnel exits in $H_2^R$ or $H_1^R$.  In either case, the
    minimality of $|D\cap E|$ implies that $\boundary D$ meets every
    component of $\boundary E\cap R$.  An outermost arc of $D\cap E$
    in $E$ must then violate~\autoref*{arcs-in-E}.  If $\boundary D$ meets
    $\boundary_3P$ and $\gamma$ is parallel to $\boundary_2P$, then
    $\epsilon$ meets either $d_1^R$ or $d_{-1}^R$ and the same
    argument applies.
  \item If $\boundary D$ meets $\boundary_2 P$, we see
    from~\autoref*{fig:E-cap-D-separating} that $\epsilon$ meets the
    components of $\boundary_2P\setminus\boundary E$ which lie on
    either side of $b_1^R$.  If $\gamma$ is parallel to $\boundary_1P$
    and the tunnel exits in $H_2^R$, then $\epsilon$ meets
    $b_{2}^R,b_4^R,\dots, b_{-1}^R$.  If $\epsilon$ meets $b_1^R$, it
    is easy to see from~\autoref*{fig:alpha-E} that there is an arc
    forbidden by~\autoref*{arcs-in-E}.  However, if $\epsilon$ does
    not meet $b_1^R$, then $\boundary D$ meets every arc of $\boundary
    E\cap R$ except for $b_1^R$, and again there must be an outermost
    arc of $D\cap E$ in $E$ which violates~\autoref*{arcs-in-E}.
  \item If $\boundary D$ meets $\boundary_2P$, $\gamma$ is parallel to
    $\boundary_1P$, and the tunnel exits in $H_1^R$, then $\epsilon$
    meets $b_{-2}^R,\dots,b_3^R$ by~\autoref*{outermost-in-D}.  Note
    that $\alpha\subseteq E$ has endpoints in $b_{-2}^R$ and $d_1^R$.
    Furthermore, $\boundary D$ meets $c^R$.
    Examining~\autoref*{fig:alpha-E} we see that $\epsilon$ cannot
    meet $b_{-1}^R$ by~\autoref*{arcs-in-E}.  Therefore $\epsilon$
    meets $b_{-2}^R$ exactly twice.

    Examining~\autoref*{fig:alpha-E} we see that~\autoref*{arcs-in-E}
    implies that any arc of $D\cap E$ with one endpoint in $c^R$ must
    have its other endpoint in either $b_{-2}^R$ or $b_{-2}^P$.  The
    latter is not a possibility since $\boundary D\cap P$ does not
    meet $b_{-2}^P$ in this case.  In $D$, an arc with endpoint $c^R$
    in a component of $\Delta$ has its other endpoint in $b_{-2}^R$ in
    $\epsilon$.  Since $\epsilon$ meets $b_{-2}^R$ exactly twice and
    the other arc has endpoints $b_{-2}^R$ and $d_1^R$, there is
    exactly one component of $\Delta$.  This component contains $p$
    endpoints of arcs of intersection of $D\cap E$.  However,
    $\epsilon$ contains at least $q+p-3$ endpoints of arcs of
    intersection of $D\cap E$ ($\epsilon\cap P$ contains $p-1$
    endpoints of arcs, and $\epsilon\cap R$ contains at least $q-2$).
    Since $q>3$, there is an arc
    violating~\autoref*{no-arcs-connect-epsilon-to-itself}.

  \item If $\boundary D$ meets $\boundary_1P$, $\gamma$ is parallel to
    $\boundary_2P$, and the tunnel exits in $H_2^R$, then $\gamma$
    meets $b_1^R$.  Recall that in this case $\beta$ has endpoints in
    $d_{-1}^R$ and $b_2^R$.  Examining~\autoref*{fig:alpha-E} it is clear that
    there is an arc violating~\autoref*{arcs-in-E}.
  \item If $\boundary D$ meets $\boundary_1P$, $\gamma$ is parallel to
    $\boundary_2P$, and the tunnel exits in $H_1^R$, then $\gamma$
    meets both $c^R$ and $b_{-1}^R$.  In this case $\beta$ has
    endpoints in $d_1^R$ and $b_{-1}^R$, and examining~\autoref*{fig:alpha-E}
    there must be an arc violating~\autoref*{arcs-in-E}.
  \end{enumerate}

  Therefore $\boundary_1A$ is nontrivial in $\boundary U$.  Note that
  $\closure{U\setminus N(K)}$ is irreducible and not a product
  by~\autoref*{attaching-2-h}.  By~\cite[Lemma 2.5.3]{CGLS},
  $\boundary_2A$ is either meridional on $\boundary N(K)$ or meets a
  meridian of $\boundary N(K)$ exactly once.  Suppose that
  $\boundary_2A$ is not meridional on $\boundary N(K)$.  By the above
  arguments, $A$ describes an isotopy of $K$ to $\boundary U$ so that
  $K$ meets a component of $\boundary Q$ zero or one times.

  We want to examine this annulus in the context of the original
  construction of $K\subseteq H$ from~\autoref*{chapter:knots}.
  There, it is clear that after attaching a $2$--handle to
  $\boundary_1P$ or $\boundary_2P$ we obtain a knot which is isotopic
  to a curve in the boundary of the solid torus meeting the other
  components of $\boundary P$ either $p$ or $q$ times.  Thus there is
  an annulus $A'$ properly embedded in $\closure{U\setminus N(K)}$
  which meets a component of $\boundary Q$ either $p$ or $q$ times.
  Since there are two annuli in the knot exterior with different
  slopes on $\boundary N(K)$, the proof of~\cite[Lemma
  2.5.3]{CGLS} shows that $\closure{U\setminus N(K)}$ must be a
  product.  However, we have already seen that this is not the case.
  


  Therefore $\boundary_2A$ is meridional on $\boundary N(K)$.  We
  obtain a meridian disk for $U$ meeting $K$ once, which is impossible
  by~\autoref*{attaching-2-h} and the fact that $\gamma$ is not
  parallel to $\boundary_3P$.
\end{proof}

Now let $q=2p\pm 1$, $p>1$, and $q>3$, and write $(M, H, K) =
(M_{p,q}^L, H_{p,q}^L, K_{p,q}^L)$.  As before, suppose that there is
a tunnel $t$ as in~\autoref*{1-bridge-iff-1-tunnel} so that
$M'=\closure{M\setminus (K\cup t)}$ is a handlebody, and suppose that
$t$ lies entirely in $H$.

\begin{lemma}\label{hard-case}
  The surface $P\subseteq M'$ is $\boundary$--incompressible.
\end{lemma}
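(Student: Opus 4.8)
The plan is to assume for contradiction that $P$ has a $\boundary$--compressing disk $D$ in $M'$ and to push the analysis of the preceding two sections to its conclusion, in parallel with the proof of~\autoref*{easy-case}. Since $D\cap P$ is a single essential arc, $D$ lies to one side of $P$; it cannot lie in $J$ by~\autoref*{P-in-J}, so $D\subseteq H'$. If $D$ were disjoint from $E$ it would be non-separating in $P$, and $\boundary$--compressing $P$ along $D$ would produce a single annulus whose core is primitive in neither $J$ nor $H'$ by~\autoref*{core-not-prim}, contradicting~\autoref*{three-punctured-spheres}. Hence $D$ meets $E$, and I would choose $D$ to minimize $(|D\cap E|,\,|D\cap\boundary N(t)|,\,|\boundary D\cap\boundary E\cap P|)$ as in the previous section.

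Next I would run $D$ through the structural lemmas. By~\autoref*{outermost-in-D} there is an arc $\alpha$ of $D\cap E$, outermost in $D$, with one endpoint in $\epsilon$ and the other in an adjacent component $\delta$ of $\Delta$; it cuts off a disk $D'\subseteq D$ with $\boundary D'=\alpha\cup\beta$ and $\beta\cap P=\emptyset$, and for $q=2p\pm1$ the endpoints of $\beta$ (hence of $\alpha$) lie in $d_{-1}^R$ and $b_2^R$ or in $d_1^R$ and $b_{-2}^R$, according to whether the foot of $t$ lies in $H_2$ or $H_1$, so $\alpha$ is one of the two arcs of~\autoref*{fig:alpha-E}. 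By~\autoref*{endpoints-of-epsilon} the two endpoints of $\epsilon$ lie in distinct components of $R'\setminus\Delta$, which I would use to rule out the degenerate position in which $\delta$ fails to separate the feet of $t$.

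The endgame then mirrors~\autoref*{easy-case}. The arc $\alpha$ cuts off a subdisk $E'$ of $E$ whose intersection with $P$ is a short explicit list of arcs read off from~\autoref*{fig:alpha-E} and~\autoref*{fig:PR-special-case}, one of which is flanked on $\boundary E$ by $c^R$. Using~\autoref*{arcs-in-E} to exclude forbidden arcs of $D\cap E$ lying in $E'$, together with~\autoref*{adjacent-cpts-of-delta} and~\autoref*{SC} to constrain how arcs of $D\cap E$ can join components of $\Delta$, I would show that no arc of $D\cap E$ meets $c^R$, hence that no component of $\Delta$ meets $c^R$. By~\autoref*{label-sequence} this forces every component of $\Delta$ to be parallel to $\boundary_3P$, which the same lemma forbids; the contradiction shows $D$ cannot exist, so $P$ is $\boundary$--incompressible in $M'$.

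The hard part is this last step. In the $q=p+1$ case of~\autoref*{easy-case} there is a single pair of $b$-- and $d$--arcs and the argument is essentially forced; here, because $q-p>1$, there are several such arcs, so tracking which arcs of $\boundary E\cap R$ a component of $\Delta$ can meet is more delicate, and one must split according to the two pictures of~\autoref*{fig:alpha-E}, according to whether the foot of $t$ lies in $H_1$ or $H_2$, and according to whether $\boundary D$ separates $P$ (the last interacting with~\autoref*{fig:E-cap-D-separating} just as in~\autoref*{endpoints-of-epsilon}). The hypotheses $p>1$ and $q>3$ are exactly what keep the hexagonal regions of~\autoref*{fig:PR-special-case} nondegenerate, so that the forbidden-arc criteria of~\autoref*{arcs-in-E} continue to apply.
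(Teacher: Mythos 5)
Your setup is the same as the paper's: reduce to $\boundary$--compressing disks in $H'$ that meet $E$ via~\autoref*{core-not-prim} and~\autoref*{three-punctured-spheres}, minimize the tuple $(|D\cap E|, |D\cap\boundary N(t)|, |\boundary D\cap\boundary E\cap P|)$, and bring in~\autoref*{outermost-in-D} and~\autoref*{endpoints-of-epsilon}. The gap is in your endgame. You propose to show that no arc of $D\cap E$ meets $c^R$, conclude that no component of $\Delta$ meets $c^R$, and then invoke~\autoref*{label-sequence} to force every component of $\Delta$ to be parallel to $\boundary_3P$. That chain of deductions is exactly what works when $q=p+1$, but the first link breaks when $q=2p\pm 1$ with $q>3$. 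In~\autoref*{easy-case} the outermost arc $\alpha$ has endpoints in $b_{-1}^R$ and $d_1^R$, so the subdisk $E'$ it cuts off contains only $b_{-1}^P$, $c^R$, $d_1^P$ between those endpoints, and any arc in $E'$ touching $c^R$ is immediately forbidden by~\autoref*{arcs-in-E} (its other endpoint would be an adjacent component of $\boundary E\cap P$, or it would cut off a subdisk containing only one arc of $\boundary E\cap P$). When $q=2p\pm1$, the arc $\alpha$ of~\autoref*{outermost-in-D} has endpoints in $d_{-1}^R$ and $b_2^R$ (or $d_1^R$ and $b_{-2}^R$), and the subdisk of $E$ on the far side of $\alpha$ contains many components of $\boundary E\cap P$; an arc from $c^R$ to a nonadjacent component of $\boundary E\setminus P$ can then separate off two or more arcs of $\boundary E\cap P$ and is \emph{not} excluded by~\autoref*{arcs-in-E}. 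Indeed, the paper's own analysis shows that in this range the components of $\Delta$ are parallel to $\boundary_1P$ and carry label sequences containing $c$ (e.g.\ $c, d_1,\dots,d_{-1}$), so the statement you want to prove is false, not merely hard.

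Because of this, the actual contradiction has to come from somewhere else, and this is where the real content of the lemma lives. The paper splits into three cases: $\boundary D$ nonseparating with $\alpha$ of each of the two types from~\autoref*{fig:alpha-E}, and $\boundary D$ separating. The separating case is dispatched directly by~\autoref*{endpoints-of-epsilon}, as you anticipate. In the two nonseparating cases one pins down the common label sequence of $\Delta$, uses~\autoref*{endpoints-of-epsilon} to force $\epsilon$ to cross a specific block of arcs $b_2,b_4,\dots$ (or $b_{-2},b_{-4},\dots$) of $\boundary E\cap R$, and then derives a contradiction by counting endpoints of $D\cap E$ along $\epsilon$ against~\autoref*{minimum-arcs-in-epsilon}, \autoref*{no-arcs-connect-epsilon-to-itself}, \autoref*{adjacent-cpts-of-delta}, and the Scharlemann cycle obstruction of~\autoref*{SC} --- in one subcase treating a sub-arc of $\epsilon$ that wraps around $\boundary_1P$ as an extra component of $\Delta$. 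None of this is a routine mirror of~\autoref*{easy-case}, and your proposal does not supply a substitute for it.
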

\begin{proof}
  By~\autoref*{core-not-prim}, we need only examine
  $\boundary$--compressing disks which meet $E$.  Let $D$ be such a
  $\boundary$--compressing disk for $P$ in $H'$ which minimizes
  $(|D\cap E|, |D\cap\boundary N(t)|, |\boundary D\cap \boundary E\cap
  P|)$.  By~\autoref*{outermost-in-D} there is a component $\alpha$ of
  $D\cap E$, outermost in $D$, which meets either $d_{-1}$ and $b_2$
  or $d_1$ and $b_{-2}$ in $R$.

  \textbf{Case 1:} Suppose that $D$ is a $\boundary$--compressing disk
  as above, $\boundary D$ is nonseparating in $P$, and that $\alpha$
  has endpoints in $d_{-1}$ and $b_2$.  Recall that the proof
  of~\autoref*{outermost-in-D} shows that both components
  $\epsilon\cap R$ and $\epsilon\cap P$ are contained in the hexagonal
  regions of $R$ and $P$, respectively.  Let $\delta$ be the component
  of $\Delta$ meeting $\beta$.  Examining~\autoref*{fig:alpha-E} we
  see that $\boundary D$ cannot meet $b_1^R$ without
  violating~\autoref*{arcs-in-E}.  Therefore $\delta$ is parallel to
  $\boundary_1P$, and by~\autoref*{label-sequence} every component of
  $\Delta$ has label sequence $c,d_1,\dots,d_{-1}$ or the reverse.
  Note that $\boundary_2P$ and $\boundary_3P$ lie in the same
  component of $R'\setminus\Delta$.  The arc $\epsilon$ meets $b_2$,
  and so by~\autoref*{endpoints-of-epsilon} and the fact that
  $\boundary D$ is nonseparating, $\epsilon$ must meet $\boundary_1P$.
  Recall from~\autoref*{P-R-remarks} that $\boundary_2P\cap
  a^P=\boundary_2P\cap b_1^R$.  A look
  at~\autoref*{fig:PR-special-case} shows that on the side of $\beta$,
  $\epsilon$ must cross $b_2,b_4,\dots b_{-1}$.

  If $\epsilon$ does not meet any other arcs of $\boundary E\cap R$,
  then by~\autoref*{no-arcs-connect-epsilon-to-itself},
  \autoref*{adjacent-cpts-of-delta}, and~\autoref*{SC}, there is an
  outermost arc of $D$ connecting $b_{-1}$ to either $c$ or $d_{-1}$.
  The first arc is forbidden by~\autoref*{arcs-in-E}.  Therefore there
  is an arc in $E$ with endpoints in $b_{-1}$ and $d_{-1}$.  Note that
  all arcs in $E$ connecting $b_2,b_4,\dots b_{-3}$ to $c$ or any
  $d_i$ must cross this arc unless all the $b_{2i}$ connect to
  $d_{-1}$.  Since $q>4$, we may consider the subdisk of $D$ bounded
  by $\boundary D$, $\alpha$, and the arc with endpoints $b_4$ and
  $d_{-1}$.  If $p=2$, then $d_1^R=d_{-1}^R$.  In light
  of~\autoref*{SC}, there must be an arc in this subdisk
  violating~\autoref*{arcs-in-E}.  If $p>2$, then by~\autoref*{SC} the
  subdisk contains arcs violating~\autoref*{adjacent-cpts-of-delta}.

  Therefore $\epsilon$ must meet other arcs of $\boundary E\cap R$.
  By~\autoref*{P-R-remarks}, it wraps around $\boundary_1P$ at least
  once.  The component of $\epsilon\setminus P$ meeting $d_{-1}$ has
  the same label set as the components of $\Delta$.  We may treat this
  subarc of $\epsilon$ as another component of $\Delta$, and the
  arguments of~\autoref*{no-arcs-connect-epsilon-to-itself},
  \autoref*{adjacent-cpts-of-delta}, and~\autoref*{SC} give a
  contradiction.


  \textbf{Case 2:} Suppose that $\boundary D$ is nonseparating in $P$
  and $\alpha$ has endpoints in $d_1$ and $b_{-2}$.  Let $\delta$ be
  the component of $\Delta$ meeting
  $\beta$. By~\autoref*{label-sequence}, $\delta$ must meet $c^R$.
  Examining~\autoref*{fig:alpha-E} we see that it misses $b_{-1}$
  by~\autoref*{arcs-in-E}.  Therefore $\delta$ is parallel to
  $\boundary_1P$, and by~\autoref*{label-sequence} the label sequence
  of every component of $\Delta$ is $d_1,\dots, d_{-1}, c$ or the
  reverse.  Again, $\boundary_2P$ and $\boundary_3P$ lie in the same
  component of $R'\setminus\Delta$.  We see
  from~\autoref*{endpoints-of-epsilon} that $\epsilon$ must either
  meet $b_{-2},b_{-4},\dots, b_3$ before passing to $P$, or it must meet
  $b_{-2},b_{-4},\dots, b_1$ before passing to $P$.

  Suppose that $\epsilon\cap R$ consists of exactly $b_{-2},\dots,
  b_3$.  Then $d_{-1}^R=d_1^R$, and so $(p, q)=(2, 5)$.  In this case,
  $b_3$ is the only endpoint of a component of $D\cap E$ in
  $\epsilon$, and this violates~\autoref*{minimum-arcs-in-epsilon}.  

  If $p>2$, then $\epsilon$ ends after meeting either $c$ or $d_1$.
  The arc with this endpoint must be outermost
  by~\autoref*{adjacent-cpts-of-delta} and~\autoref*{SC}.  Therefore
  there is an arc in $E$ with both endpoints on the same component of $\boundary
  E\cap R$ or one with endpoints $c$ and $d_1$, both of which
  contradict~\autoref*{arcs-in-E}.



  So suppose that $\epsilon$ hits $b_{-2},\dots, b_1$ before traveling
  to $P$.  As above, the last endpoint of $\epsilon$ must be either
  $c$ or $d_1$, and we get a contradiction with~\autoref*{arcs-in-E}.

  \textbf{Case 3:} If $\boundary D$ separates $P$, note that the
  points $\epsilon\cap\boundary R$ lie on the same component of
  $\boundary R$.  But this means that both components of $\epsilon\cap
  R'$ lie in the same component of $R'\setminus\Delta$, which
  violates~\autoref*{endpoints-of-epsilon}.
\end{proof}

\bibliographystyle{plain}
\bibliography{thesis}%

\end{document}